\newtheorem{theorem}{Theorem}[section]
\newtheorem{corollary}[theorem]{Corollary}
\newtheorem{proposition}[theorem]{Proposition}
\newtheorem{lemma}[theorem]{Lemma}
\newtheorem*{theorem*}{Theorem}
\theoremstyle{definition}
\newtheorem{example}{Example}
\newtheorem*{examplebis}{\cref{main ex} revisited}
\newtheorem{remark}[theorem]{Remark}
\newtheorem{case}{Case}
\newtheorem{subcase}{Subcase}[case]
\newcommand{\set}[2]{\left\{ #1 \;\middle|\; #2 \right\}} 
\newcommand{\ie}{\textit{i.e.}~} 
\newcommand{\eg}{\textit{e.g.}~} 
\definecolor{darkblue}{rgb}{0,0,0.7} 
\definecolor{green}{RGB}{57,181,74} 
\definecolor{violet}{RGB}{147,39,143} 
\renewcommand{\S}{\mathbf{S}}
\newcommand{\M}{\mathbf{M}}
\renewcommand{\L}{\mathbf{L}}
\renewcommand{\P}{\mathbf{P}}
\newcommand{\newword}[1]{\textbf{\textit{#1}}}
\newcommand{\hy}{\hat{y}}
\newcommand{\A}{{\mathcal A}}
\renewcommand{\AA}{{\mathbf A}}
\newcommand{\g}{\mathbf{g}}
\newcommand{\arc}{\mathord{\operatorname{arc}}}
\newcommand{\Pp}{P_\purple}
\renewcommand{\Pr}{P_\red}
\newcommand{\Pb}{P_\blue}
\newcommand{\yo}{\hy_\orange}
\newcommand{\cross}{\text{cr}}
\newcommand{\covered}{\lessdot}
\newcommand{\covers}{\gtrdot}
\newcommand{\purple}{{\textup{purple}}}
\newcommand{\red}{{\textup{red}}}
\newcommand{\blue}{{\textup{blue}}}
\newcommand{\orange}{{\textup{orange}}}
\def\part{\@startsection{part}{1}%
\z@{.7\linespacing\@plus\linespacing}{.8\linespacing}%
{\LARGE\sffamily\centering}}
\def\l@section{\@tocline{1}{2pt}{0pc}{}{}}
\let\oldtocpart=\tocpart
\renewcommand{\tocpart}[2]{\bf\large\oldtocpart{#1}{#2}}
\let\oldtocsection=\tocsection
\renewcommand{\tocsection}[2]{\bf\oldtocsection{#1}{#2}}
\title[Posets for $F$-polynomials in cluster algebras from surfaces]{Posets for $F$-polynomials\\ in cluster algebras from surfaces}
\thanks{VP is partially supported by the Spanish grant PID2022-137283NB-C21 of MCIN/AEI/10.13039/501100011033 / FEDER, UE, by Departament de Recerca i Universitats de la Generalitat de Catalunya (2021 SGR 00697), by the French grant CHARMS (ANR-19-CE40-0017), and by the French--Austrian projects PAGCAP (ANR-21-CE48-0020 \& FWF I 5788).
NR is partially supported by the National Science Foundation under award numbers DMS-1500949 and DMS-2054489 and by the Simons Foundation under award number 581608. SS is partially supported by the DFG through
the project SFB/TRR 191 Symplectic Structures in Geometry, Algebra and Dynamics (Projektnummer 281071066-
TRR 191).
}
\author{Vincent Pilaud}
\address{Universitat de Barcelona}
\email{vincent.pilaud@ub.edu}
\urladdr{\url{https://www.ub.edu/comb/vincentpilaud/}}
\author{Nathan Reading}
\address{North Carolina State University}
\email{reading@math.ncsu.edu}
\urladdr{\url{https://nreadin.math.ncsu.edu}}
\author{Sibylle Schroll}
\address{Universit\"at zu K\"oln, Germany and NTNU, Norway}
\email{schroll@math.uni-koeln.de}
\urladdr{\url{https://sites.google.com/site/sibylleschroll/}}
\begin{document}

\begin{abstract}
We prove a simple formula for arbitrary cluster variables in the marked surfaces model.
As part of the formula, we associate a labeled poset to each tagged arc, such that the associated $F$-polynomial is a weighted sum of order ideals.
Each element of the poset has a weight, and the weight of an ideal is the product of the weights of the elements of the ideal.
In the unpunctured case, the weight on each element is a single $\hy_i$, in the usual sense of principal coefficients.
In the presence of punctures, some elements may have weights of the form $\hy_i/\hy_j$.
Our search for such a formula was inspired by the Fundamental Theorem of Finite Distributive Lattices combined with work of Gregg Musiker, Ralf Schiffler, and Lauren Williams that, in some cases, organized the terms of the $F$-polynomial into a distributive lattice.
The proof consists of a simple and poset-theoretically natural argument in a special case, followed by a hyperbolic geometry argument using a cover of the surface to prove the general case.
\end{abstract}

\vspace*{.2cm}
\maketitle


Cluster variables are multivariate Laurent polynomials given by a multidirectional recurrence whose initial data is a skew-symmetrizable matrix.
The cluster variables generate a ring called a cluster algebra.
Cluster algebras were defined by Sergey Fomin and Andrei Zelevinsky \cite{ca1}, motivated by total positivity and dual canonical bases in Lie theory, and have since found connections to many mathematical areas.
(See, for example, \cite{LW}.)
Given a skew-symmetrizable matrix, a basic problem is to solve the recurrence by giving explicit formulas for the cluster variables.

%

Cluster variable formulas were given by James Propp~\cite{ProppArxiv}, Gabriel Carroll and Gregory Price~\cite{CarrollPrice}, and Ralf Schiffler~\cite{Schiffler-clusterExpansion} for the cluster algebra associated to the triangulations of a polygon. 
The formulas were extended to a more general class of cluster algebras, associated to triangulated surfaces~\cite{FominShapiroThurston, FominThurston}, first by Gregg Musiker and Ralf Schiffler in terms of perfect matchings on tile graphs~\cite{MusikerSchiffler}, and more generally by Gregg Musiker, Ralf Schiffler and Lauren Williams in terms of perfect matchings on snake graphs~\cite{MusikerSchifflerWilliams,MSWBases}. 
Further reformulations appeared in~\cite{CeballosPilaud-pseudotriangulationsTypeD,Yurikusa-clusterExpansiontypeA,Yurikusa-clusterExpansion}. 
Cluster variable formulas are often called ``cluster expansion formulas'', but we prefer to avoid confusion with the notion of cluster expansion in~\cite{FominZelevinsky-Ysystems,affdenom}.

The objective of this paper is to prove a more condensed cluster variable formula for triangulated surfaces. 
We present the formula independently of snake graphs, but we were motivated~by the observation by James Propp \cite[Theorem~2]{ProppArxiv} that the set of perfect matchings on a graph forms a distributive lattice.
(This observation was already mentioned in the context of snake graphs by Gregg Musiker, Ralf Schiffler, and Lauren Williams \cite[Theorem~5.2]{MSWBases}.
It was also crucial in \cite{CanSchr}, where it was used to characterize the canonical submodule lattice of a string module.
The present paper got its start from conversations among the authors about the results of~\cite{CanSchr}.)

The fact that the Laurent monomials in the cluster variable form a distributive lattice calls to mind the Fundamental Theorem of Finite Distributive lattices \cite[Theorem~3.4.1]{EC1}, due to Birkhoff~\cite{Birkhoff}.
The relevant direction of the theorem is that, for any finite distributive lattice~$L$, there is a unique poset $P$ such that $L$ is isomorphic to the inclusion order on order ideals in~$P$.

The cluster variables in a triangulated surface are indexed by tagged arcs in the surface.
For a tagged arc $\alpha$, we want to understand the poset $P_\alpha$ that encodes the Laurent monomials in the cluster variable~$x_\alpha$.
Optimistically, one would hope to write down~$P_\alpha$ directly from $\alpha$, and $x_\alpha$ would be essentially a sum of monomials indexed by order ideals in~$P_\alpha$.
More optimistically, one would hope that the monomials could be recovered from order ideals in~$P_\alpha$ simply by giving a weight to each element of $P_\alpha$ such that the monomial corresponding to an order ideal of~$P_\alpha$ is the product of the weights of its elements.
That is precisely what happens.  
Most optimistically, each element~$e$ of~$P_\alpha$ would be associated to a tagged arc in the initial tagged triangulation, with the weight of $e$ being the monomial~$\hy_i$ associated to that arc (in the usual principal-coefficients notation of \cite{ca4}).
This most optimistic possibility occurs whenever the surface has no punctures and more generally when the triangulation has no self-folded triangles.
In the general case, the situation is nearly as good:  Each element $e$ of $P_\alpha$ is again associated to an arc, but in some cases when $i$ and $j$ index the edges of a self-folded triangle, the weight on an arc is of the form~$\hy_i/\hy_j$.

Details on the construction of the poset $P_\alpha$ appear in the next section.
For now, we state the general idea and give an example.
The poset $P_\alpha$ has a central piece that we will call $P_\alpha^\cross$.
The undirected graph underlying the Hasse diagram of $P_\alpha^\cross$ is a path, recording essentially the sequence of arcs of $T^\circ$ that $\alpha$ crosses.
In many cases when $\alpha$ is tagged notched at one or more endpoints, we adjoin to $P_\alpha^\cross$ one or more chains, with each adjoined chain recording the sequence of arcs that are incompatible with $\alpha$ because of its tagging at that endpoint. 
When $\alpha$ agrees, up to tagging, with an arc in $T$, then the construction ``degenerates'' in various ways that can be precisely described.

\begin{example}\label{main ex}
The left picture in \cref{main ex fig} shows an example of a marked surface (a 4-punctured disk) with a triangulation $T^\circ$ (with arcs numbered $1$ through $11$) and a tagged arc $\alpha$ (shown thicker, in purple).  
The cluster variable $x_\alpha$ is $\frac{x_5x_6x_8}{x_1x_4x_7x_9}$ times the weighted sum of all order ideals in the poset $P_\alpha$ shown in the right picture.
(This weighted sum is the $F$-polynomial of $x_\alpha$, in the sense of~\cite{ca4}.)
The weight of an element is $\hy_i$ if the element is labeled~$i$.
The weight of the element labeled $\frac23$ is $\frac{\hy_2}{\hy_3}$.
The poset $P_\alpha^\cross$ is like the poset shown, but deleting the chain labeled $4\covered1\covered8\covered11$ on the right.
\begin{figure}
\includegraphics{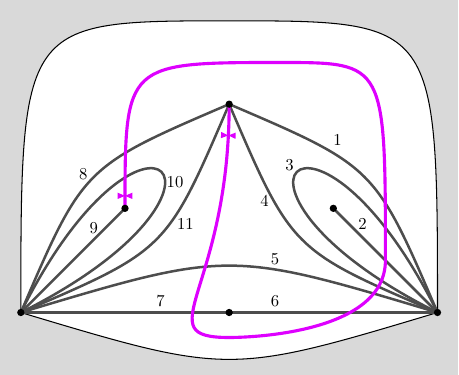}
\qquad
\scalebox{1.1}{\includegraphics{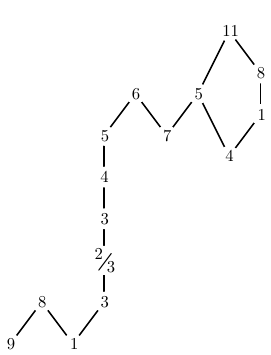}}
\caption{A tagged arc and the corresponding labeled poset.}
\label{main ex fig}
\end{figure}
We explain this example in full detail in \cref{sec:formula}.
\end{example}

The unpunctured case of the cluster variable formula can also be obtained from \cite[Theorem~5.4]{MSWBases}.
(See also \cite{Claussen}.)
Near the beginning of our work on this project, Jon Wilson \cite{Wilson} constructed similar weighted posets and proved similar cluster algebra variable formulas in the ``nondegenerate'' cases (with some restrictions on the surface), proving his results based on the cluster variable formulas in terms of the snake graphs of~\cite{MusikerSchifflerWilliams}.
After we had conjectured our main theorem among ourselves, but before we had proved it, Ezgi Kantarc{\i} O{\u g}uz and Emine Y{\i}ld{\i}r{\i}m \cite{OY} independently stated the nondegenerate case (with no restrictions on the surface) and proved it by expressing the cluster variables as products of matrices.
More recently, similar posets also appear in \cite{Weng}, where they are used to compute Donaldson-Thomas $F$-polynomials.

We conclude this introduction with an outline of the paper and a discussion of background references.

\medskip

\noindent
\textit{\cref{sec:formula}.}  
We state the formula (\cref{main}) for the cluster variable associated to a tagged arc and illustrate it with numerous examples.
We also make a standard, slight reduction of the theorem, showing that it needs only be proved for a subset of the tagged arcs.

%
%
%

\medskip

\noindent
\textit{\cref{sec:special}.}  
We prove in a special case that the cluster variable formula satisfies the correct exchange relations.
The point is that certain exchange relations correspond to simple, natural decompositions of the set of order ideals of the corresponding posets (\cref{subsec:F}), and to natural relations between the $\g$-vectors (\cref{subsec:shear}).
(Exchange relations, in the guise of skein relations, are also characterized in terms of snake graphs in \cite{MSWBases,CS1,CS2,CS3}.)

\medskip

\noindent
\textit{\cref{sec:hyperbolic}.}  
We prove the analog of \cref{main} in the coefficient-free case.
This coefficient-free analog is a consequence of \cref{main} and is not needed for the proof of \cref{main}.
However, it is included here as a simplified version of the proof of \cref{main}.
More specifically, the coefficient-free case uses only the fairly standard hyperbolic geometry of marked surfaces (avoiding the complications of opened surfaces and the corresponding lambda lengths and tropical lambda lengths).
It is hoped that the separate treatment of this simpler coefficient-free case will serve to illustrate a key idea of the proof before introducing the complications of the full proof.
The basic structure of the proof is an induction on the number of elements of $P_\alpha$.
The key idea is to pass to a cover of part of the surface and lift the hyperbolic metric to the cover, so that the exchange relations established in \cref{sec:special} serve as the inductive step.
(In the simplest case of plain-tagged arcs, the combinatorics of passing to a cover is similar to the construction in \cite[Section~7]{MusikerSchifflerWilliams}.
The construction here for arbitrary arcs and the lifting of the hyperbolic metric appear to be new.)

\medskip

\noindent
\textit{\cref{sec:tropical}.}  
We prove the full version of \cref{main}.
The proof structure is the same as the simplified proof in \cref{sec:hyperbolic} and reuses many of the tools, but with the full machinery of opened surfaces, lambda lengths, and tropical lambda lengths.

\medskip

\noindent
\textit{\cref{geo sec}.}  
The construction that takes an arc and defines the associated cluster variables as a laminated lambda length \cite[Definition~15.3]{FominThurston} is valid more generally for tagged geodesics connecting marked points (relaxing conditions such as the requirement that the geodesic not cross itself).
We extend \cref{main} to these more general tagged geodesics as \cref{extended}.
We also give a combinatorial characterization of tagged geodesics as \cref{tagged geo prop}.

\medskip

Throughout the paper, we assume the most basic background on marked surfaces in the sense of Sergey Fomin, Michael Shapiro, and Dylan Thurston \cite{FominShapiroThurston,FominThurston}.
Specifically, we use without explanation the material on (tagged) arcs, (tagged) triangulations signed adjacency matrices from \cite[Sections~2--4~\&~7]{FominShapiroThurston} (summarized in \cite[Section~5]{FominThurston}) and the material on laminations and shear coordinates from \cite[Sections~12--13]{FominThurston}.
Later, in support of the proof, we will review results of \cite{FominThurston} that use hyperbolic geometry and tropical hyperbolic geometry to realize cluster variables (\cite[Sections~7--8]{FominThurston} for the simpler coefficient-free case in \cref{sec:hyperbolic} and \mbox{\cite[Sections~9--10~\&~14--15]{FominThurston}} for the full proof in \cref{sec:tropical}).
We also assume the most basic background on cluster algebras of geometric type, $F$-polynomials, and $\g$-vectors from \cite[Sections~2--3~\&~5--6]{ca4}.
Aside from this background, the paper is completely self-contained.


\section{The theorem}
\label{sec:formula}
Let $(\S,\M)$ be a marked surface and let $T$ be a tagged triangulation of $(\S,\M)$.
We will take $T$ to have all arcs tagged plain, except possibly at some punctures $p$ incident to exactly two tagged arcs of $T$, identical except for opposite taggings at $p$.
(This is a standard reduction.
If $T$ does not have this property, we can modify it by reversing all taggings at certain punctures, the only cost being that we must  make the same modification of taggings in all arcs $\alpha$ for which we compute the cluster variable $x_\alpha$.)  
As usual, we will pass between $T$ and the ordinary triangulation $T^\circ$ with self-folded triangles replacing pairs of coinciding arcs with opposite tagging at one puncture, considering $T$ and $T^\circ$ to be different ways of viewing the same combinatorial data.

Let~$B(T)$ be the signed adjacency matrix of $T$, with entries $b_{\alpha\gamma}$ indexed by arcs $\alpha$ and $\gamma$ in $T$.
We associate a principal-coefficients cluster algebra $\A_\bullet(T)$ to $(\S,\M)$ and $T$ as usual.
Specifically, extend $B(T)$ by appending the identity matrix below it.
Let $\set{x_\gamma}{\gamma\in T}$ be the initial cluster variables, and let $\set{y_\gamma}{\gamma\in T}$ be the tropical variables (which are also the coefficients at the initial seed).
For each $\gamma\in T$, let $\hy_\gamma=y_\gamma\prod_{\beta\in T}x_\beta^{b_{\beta\gamma}}$.

Given a tagged arc $\alpha$ in $(\S,\M)$, let $x_\alpha$ be the associated cluster variable.
For each tagged arc~$\alpha$, we will define the Laurent monomial $\g_\alpha$ in the initial cluster variables, the poset $P_\alpha$, and the map~$w:P_\alpha\to\set{\hy_\beta}{\beta\in T}\cup\set{\hy_\beta/\hy_\gamma}{\beta\neq\gamma\in T}$ that appear in the following theorem.

\begin{theorem}\label{main}
The principal-coefficients cluster variable associated to a tagged arc $\alpha$ is
\begin{equation}
\label{main eq}
x_\alpha=\g_\alpha\cdot\sum_I \prod_{e \in I} w(e),
\end{equation}
where the sum is over all order ideals $I$ in $P_\alpha$.
\end{theorem}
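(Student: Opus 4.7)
The plan is to prove \cref{main} by induction on $|P_\alpha|$, using Ptolemy-type exchange relations as the inductive step and hyperbolic geometry via lambda lengths to promote those local relations into global identities of cluster variables. After the standard reduction announced at the end of this section, I would first dispatch the base cases in which $\alpha$ agrees up to tagging with an arc of $T$: here $P_\alpha$ is either empty or a single chain recording the incompatibilities forced by a notch at a puncture, and both sides of \eqref{main eq} can be read off directly from the mutation sequence at that puncture.

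For the inductive step, I would exploit the elementary combinatorial fact that the order ideals of $P_\alpha$ decompose naturally with respect to a chosen maximal element $m$: ideals not containing $m$ are exactly the order ideals of $P_\alpha\smallsetminus\{m\}$, while ideals containing $m$ are forced to contain the principal order ideal of $m$ and therefore biject with order ideals of $P_\alpha$ minus the principal filter of $m$. Choosing $m$ to correspond to a crossing of $\alpha$ with an arc $\gamma\in T$, this decomposition should match, term by term, a Ptolemy/skein relation of the form $x_\alpha x_\gamma = w_1\, x_{\alpha'}x_{\gamma'} + w_2\, x_{\alpha''}x_{\gamma''}$ whose two summands resolve the chosen crossing in the two possible ways and whose coefficients come from shear coordinates; the four tagged arcs appearing on the right have strictly smaller posets than $P_\alpha$. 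This is exactly the content announced for \cref{sec:special}: it matches the combinatorics of order ideals with the combinatorics of $\g$-vectors for a carefully chosen family of exchange relations, and so verifies the inductive step in a special case.

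The main obstacle, and the reason the argument splits into \cref{sec:hyperbolic} followed by \cref{sec:tropical}, is that the exchange relations produced in \cref{sec:special} hold only in a restricted local configuration, so the induction cannot be applied in situ to arbitrary $\alpha$. My plan, following the outline in the introduction, is to lift the hyperbolic metric on $(\S,\M)$ to a carefully chosen cover of a neighborhood of $\alpha$, designed so that the arcs of $T^\circ$ crossed by $\alpha$, together with $\alpha$ itself, assemble into an initial-like configuration upstairs in which the hypotheses of the special-case exchange relation are met. Upstairs, the inductive exchange becomes a genuine Ptolemy identity among lambda lengths, and pushing the resulting identity back down to $(\S,\M)$ recovers \eqref{main eq}. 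For notched endpoints, the cover construction also has to absorb the extra chains attached to $P_\alpha^\cross$, which is where the combinatorial description of $P_\alpha$ in \cref{sec:formula} pays off.

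The most delicate step is tracking the weight function $w$ under this lifting. In the coefficient-free setting of \cref{sec:hyperbolic}, every weight is a plain $\hy_\beta$ and ordinary shear-coordinate bookkeeping on the cover is enough. In the full principal-coefficients setting of \cref{sec:tropical}, however, self-folded triangles of $T^\circ$ produce twisted weights of the form $\hy_\beta/\hy_\gamma$, and matching these against the monomials in initial $\hy$-variables coming out of the laminated lambda length calculation forces the use of opened surfaces and tropical lambda lengths from \cite{FominThurston}. I expect this reconciliation of self-folded-triangle weights with the tropical lambda length bookkeeping on the cover to be the main technical obstacle; the combinatorial decomposition of \cref{sec:special} and the ordinary hyperbolic geometry of \cref{sec:hyperbolic} are designed precisely so that, once the correct weight conventions are installed, the full theorem reduces to the same induction.
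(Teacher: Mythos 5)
Your plan matches the paper's proof essentially step for step: the reduction at self-folded punctures, induction on $|P_\alpha|$ with base case $P_\alpha=\varnothing$, the decomposition of order ideals at a distinguished element matching a tidy exchange relation (\cref{ideal decomp} together with the $\g$-vector identities of \cref{g prop}), the tile cover of $\alpha$ to force tidiness while preserving the (laminated) lambda length, $\g_\alpha$, and $F(P_\alpha)$, and the two-pass treatment via ordinary and then tropical/opened-surface lambda lengths. The only minor divergences --- splitting at a maximal element rather than at an arbitrary uniquely-labeled element $e_\gamma$, and treating the degenerate coinciding-arc cases as separate base cases instead of running them through the same cover induction --- are cosmetic.
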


The situation is simpler in the unpunctured case, or more generally when the triangulation~$T^\circ$ has no self-folded triangles, because in that case the weights $w(e)$ are all of the form $\hy_\alpha$ for $\alpha\in T$.
We write $\arc(e)$ for the tagged arc $\alpha\in T$ such that $w(e)=\hy_{\arc(e)}$.
The following corollary is a special case of \cref{main}.
\begin{corollary}\label{main unpunct}
If the triangulation $T^\circ$ has no self-folded triangles, then the principal-coefficients cluster variable associated to an arc $\alpha$ is
\begin{equation*}
x_\alpha=\g_\alpha\cdot\sum_I \prod_{e\in I}\hy_{\arc(e)},
\end{equation*}
where the sum is over all order ideals $I$ in $P_\alpha$.
\end{corollary}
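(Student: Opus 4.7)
The formula of \cref{main unpunct} is a direct specialization of \cref{main}: when $T^\circ$ has no self-folded triangles, no weight of the form $\hy_\beta/\hy_\gamma$ arises in the construction of $w$ (that type of weight is specifically associated with arcs bounding a self-folded triangle), so every $w(e)$ equals $\hy_{\arc(e)}$ and the sum in \cref{main} reduces to the sum in \cref{main unpunct}. The real content is therefore \cref{main}, and I will sketch how I would prove it.

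The plan is to characterize $x_\alpha$ by the recursion it satisfies and to show that the right-hand side of~\eqref{main eq} obeys the same recursion with the same initial data. Cluster variables in a surface cluster algebra are determined by their values on the arcs of $T$ together with the Ptolemy-type exchange relations induced by flipping arcs of a triangulation. For an initial arc $\alpha\in T$ the poset $P_\alpha$ should degenerate so that the sum collapses to $1$ and the prefactor $\g_\alpha$ recovers $x_\alpha$; for $\alpha\notin T$ one must check a sufficient supply of exchange relations.

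Verifying a single exchange relation reduces to a combinatorial statement about the posets. Given a flip relating $\alpha$ and $\alpha'$ inside a quadrilateral with sides $\beta_1,\beta_2,\beta_3,\beta_4$, the poset $P_\alpha$ should admit a distinguished element (or small subset of elements) whose presence or absence in an order ideal partitions the order ideals of $P_\alpha$ into two families, each in bijection with products of order ideals of the smaller posets appearing on the right-hand side of the Ptolemy relation. Combined with a tropical Ptolemy identity for the $\g$-vectors, read off from shear coordinates of elementary laminations, this decomposition converts the combinatorial bijection into the exchange identity for $F$-polynomials, and hence for cluster variables. This is the content I expect \cref{sec:special} to establish.

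The main obstacle is global: single flips do not generate every tagged arc from $T$, and in the punctured case an arbitrary $\alpha$ need not sit in any flip quadrilateral whose other three sides already lie in $T$. My strategy is to pass to a cover. Realize $x_\alpha$ as a (laminated) lambda length of a geodesic representing $\alpha$, pass to a finite cover of a neighborhood of the surface in which the lift of $\alpha$ crosses fewer arcs --- equivalently, the lifted poset is strictly smaller --- so that a flip that is unavailable downstairs becomes available on the cover. Lift the hyperbolic (respectively tropical hyperbolic) metric, apply the exchange identity from \cref{sec:special} upstairs, and descend. Then induct on $|P_\alpha|$. For the coefficient-free statement standard hyperbolic geometry suffices, as in \cref{sec:hyperbolic}; the full principal-coefficients statement requires opened surfaces and tropical lambda lengths from \cite{FominThurston}, as in \cref{sec:tropical}. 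The most delicate point will be tracking how punctures and self-folded triangles force weights of the form $\hy_i/\hy_j$ and ensuring that the lift-and-descend argument respects these corrections, along with handling the ``degenerate'' cases where $\alpha$ coincides up to tagging with an arc of $T$.
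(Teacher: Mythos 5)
Your derivation of \cref{main unpunct} from \cref{main} is exactly the paper's: with no self-folded triangles every weight is a single $\hy_{\arc(e)}$, so the corollary is an immediate specialization. Your sketch of the proof of \cref{main} itself also matches the paper's strategy (tidy exchange relations decomposing order ideals, a $\g$-vector identity from shear coordinates, and an induction on $|P_\alpha|$ via a tile cover carrying the lifted hyperbolic and tropical structures), so there is nothing to correct.
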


In the remainder of this section, we provide the precise definitions of the Laurent monomial $\g_\alpha$, the poset $P_\alpha$, and the weight map~$w$ that appear in \cref{main,main unpunct}.

\subsection{Definition of the Laurent monomial~$\g_\alpha$}
\label{subsec:definition_g_alpha}

To define $\g_\alpha$, we define a curve $\kappa(\alpha)$ that coincides with $\alpha$ except near the endpoints of $\alpha$.
Specifically, if $\alpha$ has an endpoint $p$ at a marked point on the boundary of $\S$, then $\kappa(\alpha)$ ends on a boundary segment, at a point $q$ very near $p$, chosen so that the path along the boundary from~$p$ to $q$ keeps $\S$ on the left.
If $\alpha$ has an endpoint at a puncture $p$, where $\alpha$ is tagged plain, then $\kappa(\alpha)$ spirals clockwise into $p$.
If $\alpha$ is tagged notched at $p$, then $\kappa(\alpha)$ spirals counterclockwise into~$p$.
(This definition follows \cite[Section~5]{unisurface} and is a modification of \cite[Definition~17.2]{FominThurston}.)

We define 
\begin{equation}
\label{g def}
\g_\alpha=\prod_{\gamma\in T}x_\gamma^{-b_{\gamma}(T,\kappa(\alpha))},
\end{equation}
where $b_{\gamma}(T,\kappa(\alpha))$ stands for the $\gamma$-entry of the shear coordinates of $\kappa(\alpha)$ with respect to $T$.

\begin{example}\label{main ex g}
\cref{main ex g fig} shows $\kappa(\alpha)$ for $\alpha$ as in \cref{main ex}.
Also shown are the contributions ($-1$,~$0$~or $1$, abbreviated as $-$, $\circ$, or $+$) to $b_\gamma(T,\kappa(\alpha))$ for each $\gamma$.
As part of the definition of shear coordinates, to find the shear coordinate $b_9(T,\alpha)$, we must reverse the spiral on $\alpha$ inside the self-folded triangle containing arc $9$ and reverse the roles of arcs $9$ and $10$ (as shown in the right picture in \cref{main ex g fig}).
Similarly, to find the shear coordinate $b_2(T,\alpha)$, we must reverse the roles of arcs $2$ and $3$, but since $\alpha$ has no spiral inside the self-folded triangle containing arc $2$, this amounts to setting $b_2(T,\alpha)$ equal to $b_3(T,\alpha)$.
Keeping in mind the negative sign in \eqref{g def}, we have justified the assertion in \cref{main ex} that $\g_\alpha=\frac{x_5x_6x_8}{x_1x_4x_7x_9}$.
\begin{figure}
\includegraphics{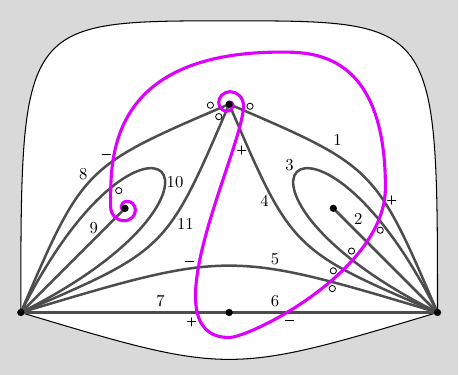}
\qquad
\scalebox{1.1}{\includegraphics{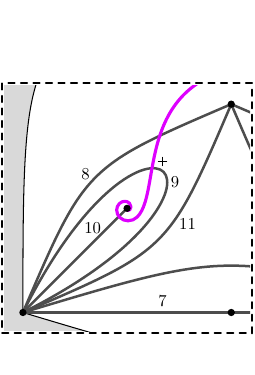}}
\caption{Finding the monomial $\g_\alpha$.}
\label{main ex g fig}
\end{figure}
\end{example}

Versions of the following proposition appear in \cite[Theorem~10.0.5]{DL-F} and \cite[Corollary~6.15]{MSWBases}.
The full proposition appears as \cite[Proposition~5.2]{unisurface} and also as \cite[Lemma~8.6]{FeTu}, where it is generalized to orbifolds.

\begin{proposition}\label{g shear}
Given a tagged arc $\alpha$, the $\g$-vector of $x_\alpha$ has $\gamma$-entry $-b_{\gamma}(T,\kappa(\alpha))$ for all $\gamma\in T$.
\end{proposition}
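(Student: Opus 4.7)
The plan is to match the tropical dynamics on both sides of the proposed equality and reduce to a simple base case. On the algebraic side, Fomin--Zelevinsky characterize the $\g$-vector of a cluster variable in principal coefficients by its behavior under a single mutation: when passing from one seed to an adjacent seed, the $\g$-vector transforms by an explicit piecewise-linear (tropical) rule determined by the sign of one of its entries (see \cite[Section~6]{ca4}). On the geometric side, Fomin--Thurston prove that the shear coordinates $b_\gamma(T,L)$ of any lamination $L$ transform under a flip of $T$ by the very same tropical rule, up to an overall sign \cite[Section~13]{FominThurston}. The minus sign built into the definition \eqref{g def} is precisely what makes these two rules match, so the proposition reduces to checking a base case and then inducting over flips.

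For the base case, take $\alpha \in T$. Then $x_\alpha$ is an initial cluster variable and its $\g$-vector is the standard basis vector $e_\alpha$. On the other side, a direct inspection of $\kappa(\alpha)$ together with the shear-coordinate definition shows that $b_\alpha(T,\kappa(\alpha))=-1$ and $b_\gamma(T,\kappa(\alpha))=0$ for $\gamma\neq\alpha$, so that $-b_\gamma(T,\kappa(\alpha))$ reproduces $e_\alpha$. Given the base case, the proposition for arbitrary tagged $\alpha$ follows by induction on the length of a flip sequence from $T$ to a tagged triangulation $T'$ containing $\alpha$: at each flip one invokes the tropical mutation rule for $\g$-vectors and the flip rule for shear coordinates and observes that they coincide term by term, with signs matched by \eqref{g def}.

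The main obstacle will be the bookkeeping around self-folded triangles and at notched punctures. Both the definition of $\kappa(\alpha)$ (clockwise versus counterclockwise spiraling, depending on the tagging at each puncture endpoint) and the definition of shear coordinates with respect to $T$ (the convention of reversing the spiral inside a self-folded triangle and swapping the roles of the radius and the loop, as illustrated in \cref{main ex g fig}) involve case analyses that must be reconciled before the tropical recurrences can be meaningfully compared. Since this case analysis has already been carried out in \cite[Proposition~5.2]{unisurface} and \cite[Lemma~8.6]{FeTu}, and partial versions of the statement appear in \cite[Theorem~10.0.5]{DL-F} and \cite[Corollary~6.15]{MSWBases}, the cleanest route is to cite one of these results directly rather than to reproduce the case-by-case verification here.
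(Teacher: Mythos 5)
Your proposal matches what the paper actually does: Proposition~\ref{g shear} is not proved from scratch here but is quoted from the literature (\cite[Proposition~5.2]{unisurface}, \cite[Lemma~8.6]{FeTu}), and those proofs proceed essentially by the tropical matching you sketch --- the piecewise-linear transformation of $\g$-vectors under change of initial seed against the identical transformation of shear coordinates under flips of $T$, anchored by the base case $\alpha\in T$. One caveat on your sketch: the tropical mutation rule for $\g$-vectors is not an unconditional result of \cite[Section~6]{ca4}; there it is conjectural (it rests on sign-coherence of $\mathbf{c}$-vectors, established only later), which is precisely why the paper describes the existing proofs as relying on ``general structural results on cluster algebras.'' Your proposal also omits the route the paper emphasizes: since \cref{main} is proved without ever invoking \cref{g shear}, the proposition is an immediate corollary of it (the $F$-polynomial in \eqref{main eq} has constant term $1$, so $\g_\alpha$ is by definition the $\g$-vector monomial), which yields a new direct proof avoiding the structural input entirely.
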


In light of \cref{g shear}, we call $\g_\alpha$ the \newword{$\g$-vector} of~$\alpha$.
The proofs of \cref{g shear} in \cite[Proposition~5.2]{unisurface} and \cite[Lemma~8.6]{FeTu} both rely on general structural results on cluster algebras.
\cref{g shear} is an immediate corollary of \cref{main}, by the definition of the $\g$-vector.
Since we do not use \cref{g shear} in any arguments in this paper, the proof of \cref{main} given here in particular constitutes a new, direct proof of \cref{g shear}.

By \cite[Corollary~6.3]{ca4}, the content of \cref{main} is \cref{g shear} plus the assertion that $\sum_I \prod_{e\in I}w(e)$ is the $F$-polynomial of $x_\alpha$ in the variables $\hy_i$.
(When we apply \cite[Corollary~6.3]{ca4}, the denominator is $1$ because we are in the principal-coefficients case.)

\subsection{Definition of the poset~$P_\alpha$ and the weight map~$w$}
\label{subsec:definition_P_alpha_and_w}

We now proceed to define~$P_\alpha$ and~$w$.
The key point is that we can describe $\alpha$ completely by giving the sequence of arcs of $T^\circ$ that~$\alpha$ crosses and noting whether~$\alpha$ is tagged notched or plain at its endpoints.

There are degenerate cases in the construction of $P_\alpha$, occurring when $\alpha$ coincides, up to tagging, with an arc in $T$.
We assume for the moment that we are not in a degenerate case.

To define $P_\alpha$, we first define a smaller poset $P^\cross_\alpha$.
The tagged arc $\alpha$ is defined up to isotopy, and (as is standard) we assume up to isotopy that as we move along $\alpha$ in a monotone fashion, when $\alpha$ crosses an arc $\gamma$ of $T^\circ$, it crosses a different tagged arc before ever crossing $\gamma$ again.
In particular, there is a finite set of points where $\alpha$ crosses arcs of $T^\circ$.
This set of points is the ground set of~$P^\cross_\alpha$.
(The endpoints of $\alpha$ are explicitly excluded from the ground set of~$P^\cross_\alpha$.)
Two points in the ground set are called \newword{adjacent} if one can move from one point to the other along $\alpha$ without crossing any other arc of $T^\circ$.
The cover relations in~$P^\cross_\alpha$ come from pairs of adjacent points:
Given adjacent points $e$ and $f$ with $e$ in an arc $\beta$ of $T$ and $f$ in an arc $\gamma$ of $T^\circ$, as one moves along $\alpha$ from $e$ to~$f$, one cuts off a corner of a triangle of $T^\circ$.
We set $e>f$ if the cut corner is to the right of $\alpha$ (as we move along $\alpha$ from $e$ to $f$) or $e<f$ if the cut corner is on the left.
We define $P^\cross_\alpha$ to be the transitive closure of these cover relations.

In most cases, the weight function on $P^\cross_\alpha$ has $w(e)=\hy_\beta$, where $\beta$ is the arc of $T^\circ$ containing~$e$.
There are two kinds of exceptions, both having to do with self-folded triangles.
First, if $\beta$ is the interior edge of a self-folded triangle of $T^\circ$ and $\gamma$ is the exterior edge of the self-folded triangle and if~$\alpha$ passes through $\gamma$ before and after passing through $\beta$ at $e$, then $w(e)=\frac{\hy_\beta}{\hy_\gamma}$.
(In keeping with the usual convention for passing between the tagged triangulation $T$ and the ordinary triangulation~$T^\circ$, the ordinary arc $\gamma$ corresponds to the tagged arc that agrees with $\beta$ but is tagged notched at the interior point of the self-folded triangle.)
Second, suppose $\alpha$ is tagged notched at a puncture $p$ inside a self-folded triangle with interior edge $\beta$ and exterior edge~$\gamma$.
If $e\in\gamma$ is the first point where $\alpha$ crosses an arc of $T^\circ$ after leaving $p$, then $w(e)=\hy_\beta$.

In many cases, we define $P_\alpha=P^\cross_\alpha$.
Specifically, in the non-degenerate cases, this happens if and only if both endpoints of $\alpha$ have the following property:
Either $\alpha$ is tagged plain at the endpoint or the endpoint is a puncture inside a self-folded triangle.

Before defining $P_\alpha$ in the cases where it is larger than $P^\cross_\alpha$, we give an example of the simpler case (\cref{simple Palpha ex}).
In this example (as in \cref{main ex} and all further examples), we suppress the actual elements of $P_\alpha$ (which are certain points in $\S$) and instead show labels that indicate weights:
The arcs of $T$ are numbered, and an element of $P_\alpha$ is labeled either with an integer~$i$ to indicate a weight $\hy_i$ or with a formal quotient $\frac ij$ to indicate a weight $\frac{\hy_i}{\hy_j}$.

\begin{example}\label{simple Palpha ex}
This example is just like \cref{main ex}, except that $\alpha$ is tagged plain at the top-middle puncture in the picture, as shown in left picture of \cref{simple Palpha ex fig}.
In this case, $\g_\alpha=\frac{x_5x_6x_8}{x_1x_7x_9}$ and~$P_\alpha=P^\cross_\alpha$ is shown on the right of \cref{simple Palpha ex fig}.
\begin{figure}
\includegraphics{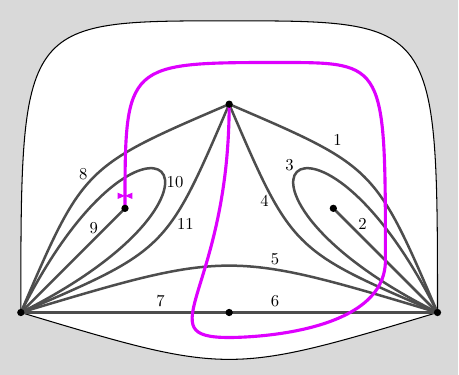}
\qquad\quad
\scalebox{1.1}{\includegraphics{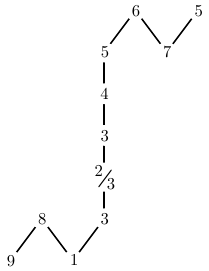}}
\caption{An example where $P_\alpha=P^\cross_\alpha$.}
\label{simple Palpha ex fig}
\end{figure}
\end{example}

The Hasse diagram of $P^\cross_\alpha$ is, as a graph, isomorphic to a path.
Thus it has two ends, given by the last points where $\alpha$ crosses an arc of $T^\circ$ before its endpoints.
At each endpoint $p$ of $\alpha$ that is tagged notched and is not inside a self-folded triangle, we augment $P^\cross_\alpha$ by adding another chain~$C$.
This chain $C$ is constructed in a similar manner to $P^\cross_\alpha$, as follows.
From a point on $\alpha$ near the endpoint~$p$, we circle around $p$, crossing all arcs of $T^\circ$ that are incompatible with $\alpha$ because of the disagreement in tagging at the endpoint.
The elements of $C$ are the points of crossing, they are ordered according to whether they cut corners left or right just as in $P^\cross_\alpha$ (necessarily with either all left or all right), and they are labeled just as the points in $P^\cross_\alpha$.
Let $e\in P^\cross_\alpha$ be the last point where $\alpha$ crosses an arc of $T^\circ$ before reaching $p$.
We attach $C$ to $P^\cross_\alpha$ with the top element of $C$ covering $e$ and the bottom element of $C$ covered by $e$.
(Note that because $p$ is not inside a self-folded triangle, $C$ has at least two elements, so its top and bottom elements are different.)
In the simplest case, where $\alpha$ actually is one of the arcs in $T$, then $\g_\alpha=x_\alpha$ and $P_\alpha$ is the empty poset.

\begin{examplebis}\label{chain ex one}
We continue and explain \cref{main ex}, which contrasts with \cref{simple Palpha ex}.
We take $T^\circ$ and $\alpha$ as shown in the left picture of \cref{main ex fig}.
As explained in \cref{main ex g}, $\g_\alpha=\frac{x_5x_6x_8}{x_1x_4x_7x_9}$.
The poset $P^\cross_\alpha$ is the poset shown in \cref{simple Palpha ex fig}.
Since $\alpha$ is tagged notched at the top-middle puncture, we adjoin a $4$-element chain $C$, labeled $4,1,8,11$ from bottom to top, as shown in the right picture of \cref{main  ex fig}.
There is no chain attached to the other end of $P_\alpha^\cross$, because that endpoint is the interior vertex of a self-folded triangle.
\end{examplebis}

\begin{remark}\label{polygon heuristic}
We offer a heuristic explanation for the addition of a chain at an endpoint where~$\alpha$ is tagged notched (at a puncture not inside a self-folded triangle).
The heuristic is equally good in all such cases, but we explain it using \cref{main ex,main ex fig}.
In the surfaces model, when an arc is tagged notched, one often thinks of it as ``going around'' the puncture rather than ending there.  
As $\alpha$ approaches the puncture (at the top of \cref{main ex fig}), we have recorded $7\covered5$ in~$P^\cross_\alpha$, but instead of stopping there and later adding a chain to make $P_\alpha$, we make $P_\alpha$ directly by continuing around the puncture (in either direction) and adding elements for the arcs we cross.
Thus we record $5\covered11\covers8\covers1\covers4$.
Then continuing along $\alpha$ in the opposite direction, we see that the element labeled $4$ is covered by the same element as before, labeled $5$.
\end{remark}

\pagebreak

\begin{example}\label{chain ex both}
We also give an example where neither endpoint of $\alpha$ is inside a self-folded triangle and both endpoints are tagged notched, so that two chains are adjoined to $P_\alpha^\cross$ to make $P_\alpha$.
For $T^\circ$ and $\alpha$ as shown in \cref{chain ex both fig}, we have $\g_\alpha=\frac{x_6}{x_4x_5}$.
The poset $P_\alpha^\cross$ is a chain labeled $3\covered2\covered1\covered3\covered6$.
At the top element of $P_\alpha^\cross$, we adjoin a chain labeled $4\covered7$, and the bottom, we adjoin a chain labeled $5\covered4\covered6$, to obtain $P_\alpha$ as shown in \cref{chain ex both fig}.
The cover relation $4\covered7$ in one adjoined chain ``disappears'' in the Hasse diagram (hence we show it as a dotted line), because $4\covered6\covered7$ in~$P_\alpha$.
\begin{figure}
\includegraphics{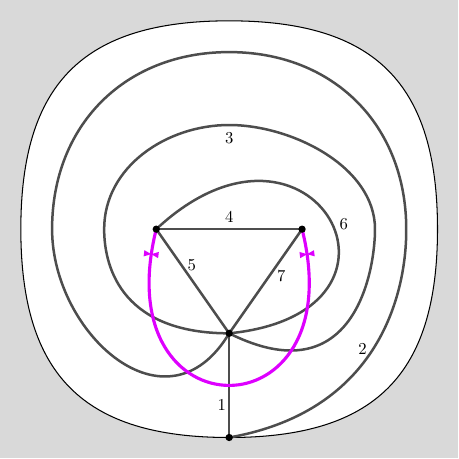}
\qquad\quad
\scalebox{1.1}{\includegraphics{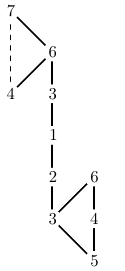}}
\caption{An example where $P_\alpha$ is $P^\cross_\alpha$ plus two chains.}
\label{chain ex both fig}
\end{figure}
\end{example}

We now consider the degenerate cases, where $\alpha$ coincides, up to tagging, with an arc in $T$.
If~$\alpha$ is actually an arc in $T$, then the $\g_\alpha$ is the initial cluster variable $x_\alpha$ and $P_\alpha$ is the empty poset, whose only order ideal is the empty set, with weight $1$.
The other degenerate cases are where $\alpha$ coincides with an arc of $T$ but has different taggings.

First, consider the case where $\alpha$ is tagged notched at only one endpoint and otherwise agrees with an arc $\beta$ of $T$ that is not the interior edge of a self-folded triangle in $T^\circ$.
Then $\alpha$ and $\beta$ are compatible, so $\beta$ is not in the chain $C$ coming from arcs of $T$ that are incompatible with $\alpha$ because of tagging at that endpoint.
In this case, $P_\alpha$ consists only of the chain $C$, as illustrated in \cref{in T one}.

\begin{example}\label{in T one}
The arc $\alpha$ shown in \cref{in T one fig} agrees with arc $1$ except for its tagging at one point. 
Thus $P_\alpha$ is the chain $C$ illustrated in the figure.
In this case, $\g_\alpha=\frac{x_2}{x_3}$.
\begin{figure}
\includegraphics{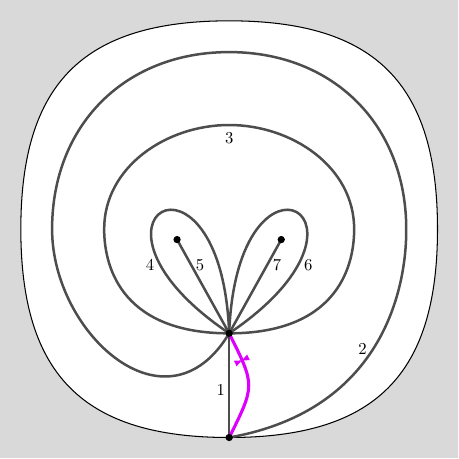}
\qquad\quad
\scalebox{1.05}{\includegraphics{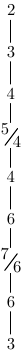}}
\caption{An example where $P^\cross_\alpha$ is empty but a chain is ``attached'' to it.}
\label{in T one fig}
\end{figure}
\end{example}

We next consider the case where $\alpha$ coincides with the interior edge of a self-folded triangle in~$T^\circ$.
In other words, $\alpha$ coincides with two arcs $\beta,\beta'$ of $T$ that are the same up to tagging, but~$\alpha$ is tagged differently than both.  
In this case, since $\alpha$ does not cross any arcs in $T^\circ$, the poset $P_\alpha^\cross$ is empty.  
Exactly one of $\beta$ and $\beta'$ is compatible with $\alpha$, without loss of generality $\beta'$.
As before, $P_\alpha$ is only the chain $C$ consisting of the arcs that are incompatible with $\alpha$ at its notched endpoint, but in this case, two elements labeled $\hy_\beta$ appear in the chain, one at the top and one at the bottom, as illustrated in \cref{self-fold coinciding}.

\begin{example}\label{self-fold coinciding}
In \cref{self-fold coinciding fig}, the arc $\alpha$ shown coincides with arc $7$ except for tagging.
It also coincides with arc $6$ except for tagging, when the latter is represented as a tagged arc rather than an ordinary arc.
The arc $\alpha$ is compatible with arc $6$ and incompatible with arc~$7$.
The poset $P^\cross_\alpha$ is empty, and~$P_\alpha$ is a chain as shown in the figure.
In this case, $\g_\alpha=\frac{1}{x_7}$.
\begin{figure}
\includegraphics{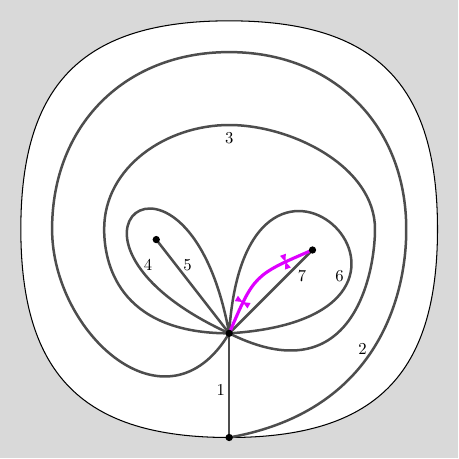}
\qquad\quad
\scalebox{1.05}{\includegraphics{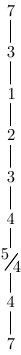}}
\caption{An example where $\alpha$ coincides with the interior edge of a self-folded triangle.}
\label{self-fold coinciding fig}
\end{figure}
\end{example}

\begin{remark}\label{another heuristic}
Note that the heuristic of \cref{polygon heuristic} applies equally well to the cases treated in \cref{in T one,self-fold coinciding}, the only difference being that the poset~$P^\cross_\alpha$ is empty.
\end{remark}

Finally, if $\alpha$ agrees, except for tagging, with an arc $\beta$ of $T$ that is not the interior edge of a self-folded triangle in $T^\circ$ but is tagged notched at both endpoints, then two chains are ``attached'' to the empty poset $P_\alpha^\cross$.
Two elements weighted $\hy_\beta$ are in each chain, and the chains intersect and interact.
Each chain is obtained as before by following a path around the puncture, except that the path starts and ends by crossing $\beta$, so that an element weighted $\hy_\beta$ is at both the top and bottom of the chain.
The poset $P_\alpha$ is a union of the two chains, identified at their top and bottom elements, with two additional cover relations, each having the atom in one chain below the coatom in the other chain, as illustrated in \cref{in T two}.

\pagebreak

\begin{example}\label{in T two}
In \cref{in T two fig}, the arc $\alpha$ coincides with arc $6$ except that $\alpha$ is tagged notched at both endpoints (neither endpoint being inside a self-folded triangle).
The poset $P_\alpha$ is the union of two chains labeled $6\covered1\covered2\covered6$
and $6\covered3\covered4\covered5\covered6$.
The top elements of the chains are identified, as are the bottom elements, and there are additional cover relations as shown.
In this case, $\g_\alpha=\frac{1}{x_5}$.
\end{example}
\begin{figure}
\includegraphics{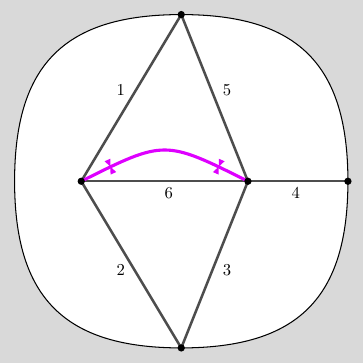}
\qquad\quad
\scalebox{1.05}{\includegraphics{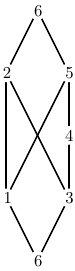}}
\caption{An example where $P^\cross_\alpha$ is empty but two chains are ``attached'' to it.}
\label{in T two fig}
\end{figure}

\begin{remark}\label{go around both}
We offer a heuristic explanation of the form of $P_\alpha$ in the cases where $\alpha$ agrees with an arc in $T$ (not the interior edge of a self-folded triangle) but is notched at both endpoints.
We will give the explanation specifically in the context of \cref{in T two}, but the explanation works equally well in general.
The idea is to think of~$P_\alpha$ as the union of four ``polygons'', similar to the polygons that have appeared in earlier examples.

Since the arc $\alpha$ ends at the left puncture with the wrong tagging, it is incompatible with arc $1$ of $T$.
Imagine that the arc $\alpha$, instead of ending at the left puncture, was incompatible with arc $1$ by actually crossing arc $1$.
In this case, $P_\alpha^\cross$ would have an element labeled $1$, to which we would adjoin a chain to make the polygon $6\covered3\covered4\covered5\covers1\covers6$.
But also, the notch in $\alpha$ at the left puncture makes it incompatible with arc $2$, so, drawing $\alpha$ below arc $6$ and letting it cross arc $2$ instead of ending at the puncture, we obtain instead the polygon $3\covered 4\covered5\covered6\covers2\covers3$.
By the same process at the right puncture, we obtain the polygons $1\covered2\covered6\covers5\covers1$ and $6\covered1\covered2\covers3\covers6$.
The poset $P_\alpha$ is the union of these four polygons.
\end{remark}

We now offer a more complicated example of the case where $\alpha$ agrees with an arc in $T$ (not the interior edge of a self-folded triangle) but is notched at both endpoints.

\begin{example}\label{in T two same}
In \cref{in T two same fig}, the arc $\alpha$ coincides with arc $3$ except that $\alpha$ is tagged notched at both endpoints (neither endpoint being inside a self-folded triangle).
The poset $P_\alpha$ is the union of two chains labeled 
\[3\covered2\covered1\covered3\covered6\covered7/6\covered6\covered4\covered5/4\covered4\covered3\]
and
\[3\covered6\covered7/6\covered6\covered4\covered5/4\covered4\covered3\covered2\covered1\covered3,\]
with top elements identified and bottom elements identified, and with additional cover relations as shown.
In this case, $\g_\alpha=\frac{1}{x_3}$.
\begin{figure}
\includegraphics{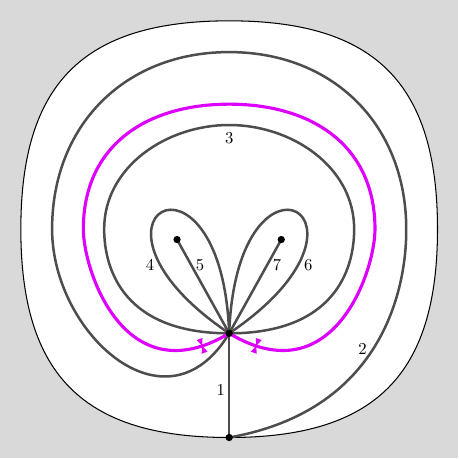}
\qquad\quad
\scalebox{1.05}{\includegraphics{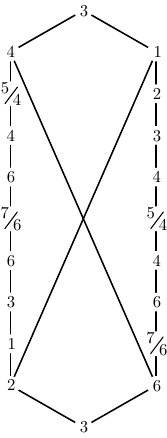}}
\caption{Another example where $P^\cross_\alpha$ is empty but two chains are ``attached'' to it.}
\label{in T two same fig}
\end{figure}
\end{example}

To conclude the discussion of the case where $\alpha$ agrees with an arc in $T$ (not the interior edge of a self-folded triangle) but is notched at both endpoints, we give an example where one of the two chains contains only three elements, so that cover relations ``disappear''. 
This happens when~$\alpha$ agrees with an arc in $T$ that is in two triangles of $T$ that combine to form a once-punctured digon.
It is impossible for both chains to contain only three elements, because if so, then $\S$ is a sphere with three punctures, which is ruled out in the definition of a marked surface.

\begin{example}\label{disappear1}
In \cref{dis 1 fig}, $P_\alpha$ is the union of two chains labeled $1\covered 2\covered 1$ and $1\covered3\covered4\covered1$ with top elements identified and bottom elements identified.
\begin{figure}
\includegraphics{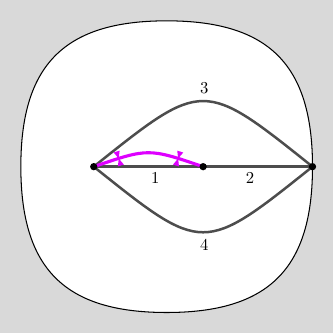}
\qquad\quad
\scalebox{1.05}{\includegraphics{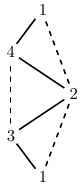}}
\caption{$P^\cross_\alpha$ is empty, two chains are ``attached'' to it, and cover relations ``disappear''.}
\label{dis 1 fig}
\end{figure}
Because of the additional cover relations that connect the atom in one chain to the coatom in the other chain, two cover relations between elements labeled $1$ and $2$ disappear and the cover relation between elements labeled $3$ and $4$ also disappears (hence we show them as dotted lines).
In this example, $\g_\alpha=\frac1{x_1}$.

The cover relation involving $3$ and $4$ disappears because the chain labeled $1\covered3\covered4\covered1$ has only four elements.  
In cases where one chain has three elements and the other has more than four, the only cover relations that disappear are those in the chain with three elements.
\end{example}

\subsection{A slight reduction of the theorem}\label{red sec}
We now reduce \cref{main} to the case where $\alpha$ has the following property:
When $\alpha$ has an endpoint that is the interior vertex of a self-folded triangle in $T^\circ$, the tagging of $\alpha$ is plain at that endpoint.
This kind of argument is standard in the marked surfaces model.

First, we mention a general combinatorial symmetry that is built into the marked surfaces model \cite{FominShapiroThurston,FominThurston}.
(Indeed, we have already used this symmetry to assume that the tagged triangulation $T$ has all arcs tagged plain, except possibly at punctures $p$ incident to exactly two tagged arcs of $T$ with opposite taggings at $p$.)

Suppose $T$ is a tagged triangulation and suppose $p$ is a puncture.
For any tagged arc $\gamma$, let~$\gamma'$ be the arc obtained by switching the tagging of $\gamma$ at $p$ (or leaving $\gamma$ unchanged if it does not have an endpoint at $p$).
Let $T'$ be the tagged triangulation $\{\gamma':\gamma\in T\}$.
Suppose we are given a tagged arc $\alpha$ and an expression for the cluster variable $x_\alpha$ with principal coefficients at $T$, in terms of the initial cluster variables $x_\gamma$ for $\gamma\in T$ and the tropical variables $y_\gamma$ for $\gamma\in T$.
If we take that expression and replace every $x_\gamma$ with $x_{\gamma'}$ and every $y_\gamma$ with $y_{\gamma'}$, we obtain an expression for the cluster variable~$x_{\alpha'}$ with principal coefficients at $T'$, in terms of the initial cluster variables~$x_{\gamma'}$ for~$\gamma'\in T'$ and the tropical variables $y_{\gamma'}$ for $\gamma'\in T'$.

This symmetry exists because the signed adjacency matrix of $T$ and $T'$ are the same by definition (except for being indexed by arcs $\gamma$ or arcs $\gamma'$) and because flips of tagged arcs commutes with tag-switching at $p$.
Thus, if we obtain $\alpha$ by a sequence of flips starting with $T$ and write an expression for $x_\alpha$ using exchange relations, the corresponding sequence of flips starting with $T'$ writes the same expression for $x_{\alpha'}$.

Now, suppose $T$ is a tagged triangulation with all arcs tagged plain, except possibly at some punctures $p$ incident to exactly two tagged arcs of $T$, identical except for opposite taggings at~$p$.
Suppose $p$ is one such puncture, so that $p$ is the interior vertex of a self-folded triangle in the corresponding ordinary triangulation $T^\circ$.
If $\beta$ and $\gamma$ are the two tagged arcs associated to the self-folded triangle, then the operation of switching tags at such a $p$ simply switches $\beta$ and $\gamma$.
(That is, $\beta'=\gamma$ and $\gamma'=\beta$.)

We see that if $\alpha$ is tagged notched at a puncture $p$ that is the interior vertex of a self-folded triangle, then an expression for $x_\alpha$ in terms of the initial cluster variables and principal coefficients at $T$ can be obtained from any such expression for $x_{\alpha'}$ by replacing all instances of $x_\beta$ by $x_\gamma$ and vice versa.
We check that the right side of \cref{main eq} has the same symmetry.
First, $\kappa(\alpha)$ and~$\kappa(\alpha')$ are related by reversing their spirals at $p$.
By the definition of shear coordinates with respect to tagged triangulations \cite[Definition~12.1]{FominThurston}, we see that $\g_\alpha$ and $\g_{\alpha'}$ are related by replacing all instances of $x_\beta$ by $x_\gamma$ and vice versa.
By inspection of the definition in \cref{subsec:definition_P_alpha_and_w}, we also see that the weighted posets $P_\alpha$ and $P_{\alpha'}$ are related in the same way.
(In the non-degenerate case, see \cref{simple Palpha ex} and in the degenerate case, see \cref{self-fold coinciding}.)

We have established the following reduction of \cref{main}.

\begin{proposition}\label{reduce}
Assume that \cref{main eq} holds for every tagged arc $\alpha$ with the following property:
When $\alpha$ has an endpoint that is the interior vertex of a self-folded triangle in $T^\circ$, the tagging of $\alpha$ is plain at that endpoint.
Then \cref{main eq} holds for every tagged arc.
\end{proposition}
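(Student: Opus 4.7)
The plan is a straightforward induction on the number $k(\alpha)$ of punctures at which $\alpha$ is tagged notched and which are interior vertices of self-folded triangles in $T^\circ$. When $k(\alpha)=0$, the tagged arc $\alpha$ satisfies the hypothesis of the proposition and there is nothing to prove. For the inductive step, I fix such a puncture $p$, let $\beta$ and $\gamma$ be the interior and exterior edges of the self-folded triangle at $p$, and let $\alpha'$ be the tagged arc obtained from $\alpha$ by reversing the tag at $p$. Because $T$ is assumed to be tagged plain everywhere except possibly at punctures like $p$, switching tags at $p$ preserves $T$ as a set of tagged arcs (it merely swaps the roles of $\beta$ and $\gamma$), so $k(\alpha') = k(\alpha) - 1$ and the induction hypothesis applies to $\alpha'$.

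The task is then to verify that \cref{main eq} for $\alpha'$, transported back to $\alpha$ via the tag-switching symmetry described in the paragraphs preceding the proposition, produces \cref{main eq} for $\alpha$. The left-hand sides are compatible by the combinatorial symmetry already spelled out there: $x_\alpha$ is obtained from $x_{\alpha'}$ by the substitution $x_\beta \leftrightarrow x_\gamma$ and $y_\beta \leftrightarrow y_\gamma$ (hence $\hy_\beta \leftrightarrow \hy_\gamma$). It therefore suffices to check that each factor on the right-hand side transforms in the same way.

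For the $\g$-vector factor, the plan is to use that $\kappa(\alpha)$ and $\kappa(\alpha')$ differ only by reversing the spiral at $p$, and that the tagged-shear-coordinate convention of \cite[Definition~12.1]{FominThurston} is designed precisely so that such a spiral reversal is absorbed into swapping the $\beta$- and $\gamma$-entries. For the weighted poset, the verification is by direct inspection of the definitions in \cref{subsec:definition_P_alpha_and_w}: the ground set of crossings of $\alpha$ with $T^\circ$ and the adjoined chains are unaffected by the tag-switch at $p$, while the weight map is set up (compare \cref{simple Palpha ex,self-fold coinciding}) so that interchanging the labels $\hy_\beta$ and $\hy_\gamma$ sends the weighted poset of $\alpha'$ to that of $\alpha$. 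The main obstacle is this last check, which is essentially a case analysis of how self-folded triangles enter the construction of $P_\alpha$ and its weight map; but all cases are already illustrated in the examples of \cref{subsec:definition_P_alpha_and_w}, so the check reduces to careful bookkeeping rather than new ideas.
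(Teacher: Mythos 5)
Your proposal is correct and follows essentially the same route as the paper: both rest on the tag-switching symmetry at a puncture $p$ that is the interior vertex of a self-folded triangle (which fixes $T$ while swapping $\beta$ and $\gamma$), and both verify that $\g_\alpha$ transforms correctly via the spiral reversal in $\kappa(\alpha)$ and the tagged shear-coordinate convention, and that $P_\alpha$ with its weights transforms correctly by inspection of the definitions. Your packaging as an induction on the number of offending punctures (at most two, since an arc has two endpoints) is only a cosmetic reorganization of the paper's argument.
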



\section{Tidy exchange relations}\label{sec:special}
Given a finite poset $P$ with weights $w:P\to\set{\hy_\beta}{\beta\in T}\cup\set{\hy_\beta / \hy_\gamma}{\beta\neq\gamma\in T}$, write 
\[{F(P)=\sum_I \prod_{e\in I}w(e)},\]
where the sum is over all ideals $I$ in $P$.
(This notation suppresses the dependence of $F(P)$ on the map $w$.)  
The content of \cref{main} is that for any tagged arc $\alpha$, the cluster variable indexed by $\alpha$ has $\g$-vector monomial $\g_\alpha$ and $F$-polynomial $F(P_\alpha)$, where the weight function on $P_\alpha$ is~$w$ as defined in \cref{subsec:definition_P_alpha_and_w}.
By \cite[Proposition~5.1]{ca4}, the $F$-polynomials of cluster variables are uniquely determined by the condition that initial cluster variables have $F$-vector $1$ and the exchange relations \cite[(5.3)]{ca4}.

Write $F_\alpha$ for the $F$-polynomial of a tagged arc $\alpha$.
In this section, we prove that for certain special choices of $\alpha$ and $T$ and an initial arc $\gamma\in T$, the exchange relation that writes $F_\alpha\cdot F_\gamma$ in terms of other $F$-polynomials $F_\beta$ also holds with $F_\alpha$, $F_\gamma$, and each $F_\beta$ replaced by $F(P_\alpha)$, $F(P_\gamma)$ and the corresponding polynomials $F(P_\beta)$.
We also show that this exchange relation records a natural decomposition of the set of order ideals of $\alpha$.
Finally, we show that the relationship between $\g_\alpha\cdot\g_\gamma$ and the $\g_\beta$ is correct.

\subsection{Exchange relation on $F$-polynomials}\label{subsec:exchange}
Exchange relations on $F$-polynomials in general are given in \cite[Proposition~5.1]{ca4}.
In this section, we interpret them in the surface model using the principal-coefficients version of the material in \cite[Sections~12--13]{FominThurston} that realizes geometric coefficients in terms of laminations.
(See also \cite[Theorem~15.6]{FominThurston}.)

Throughout the section, we assume that the tagged triangulation~$T$ has all taggings plain.
Thus also the ordinary triangulation $T^\circ$ has no self-folded triangles.
Since in this case $T$ and $T^\circ$ coincide, we will only use the notation $T$ in this section.
In this section, we have no need of hyperbolic geometry, but rather we use results of \cite{FominThurston} that were proved using hyperbolic geometry. 

Given an arc $\gamma\in T$, the \newword{elementary lamination} associated to $\gamma$ is a curve $L_\gamma$ that is defined precisely as $\kappa(\gamma)$ was, except with the right and left directions swapped.
Thus $L_\gamma$ agrees with~$\gamma$ except very close to the endpoints, where $L_\gamma$ turns slightly to the right to end on a boundary segment or spiral counterclockwise into a puncture.
(In this section, we only allow plain taggings.  
In general, if $\gamma$ is tagged notched at one of its endpoints, then $L_\gamma$ spirals clockwise into that puncture.)
Starting with principal coefficients at $T$, the extended exchange matrix at another tagged triangulation~$T'$ has coefficient rows given by shear coordinates of elementary laminations with respect to $T'$.
Specifically, in the coefficient row indexed by $\gamma\in T$, the entry in the column indexed by $\alpha\in T'$ is $b_{\alpha}(T',L_\gamma)$, the $\alpha$-entry of the shear coordinates of $L_\gamma$ with respect to $T'$.

Since the coefficients of principal-coefficients extended exchange matrices are determined by shear coordinates of elementary laminations, we can describe exchange relations between $F$-polynomials geometrically in terms of arcs and elementary laminations.
Two distinct tagged arcs admit an exchange relation if and only if one of the following descriptions hold:
\begin{itemize}
\item The two arcs intersect exactly once in their interiors and, if they share endpoints, have the same tagging at shared endpoints; or
\item The two arcs share an endpoint where their taggings disagree, do not intersect in their interiors, and, if they also share the other endpoint, they have the same tagging there.
\end{itemize}

\cref{exch quad} illustrates the exchange relation for arcs that intersect in their interior.
The arcs being exchanged are shown in purple and are horizontal and vertical.  
\begin{figure}
\begin{tabular}{ccc}
\scalebox{1}{\includegraphics{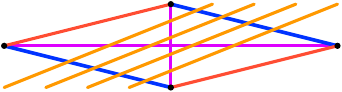}}
&\qquad\quad&
\scalebox{1}{\includegraphics{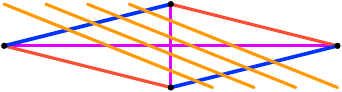}}
\end{tabular}
\caption{Exchange relations for crossing arcs.}
\label{exch quad}
\end{figure}
Each picture of \cref{exch quad} shows four more arcs, colored red and blue and forming a quadrilateral whose diagonals are the purple arcs, and some orange curves crossing the quadrilateral that represent elementary laminations associated to arcs of~$T$.  
There are some complications in general that we are able to avoid here by assuming that the purple arcs have, between them, four distinct endpoints.  
(Otherwise, some of the blue or red arcs may coincide or some red or blue arc may need to be replaced, in exchange relations, by the product of two tagged arcs that coincide except for the tagging at one endpoint.)
There is one complication that we will not be able to avoid:
If any endpoints of purple arcs are tagged notched, then the orange curves agree with the elementary laminations for the arcs of $T$, except that all spirals at those endpoints are reversed.

Elementary laminations (possibly with some spirals reversed) play a role in the exchange relation if they cross a blue arc, then the purple arcs, then the other blue arc, or if they similarly cross a red arc, then the purple arcs, then the other red arc.
Since we have assumed that $T$ has no notched taggings, the elementary laminations associated to arcs in $T$ can be taken to be disjoint from each other.
(This is \emph{almost} true in any case; in general, two of these elementary laminations only intersect when they are associated to a pair of arcs that coincide except for tagging at one endpoint.)
Because the elementary laminations are disjoint from each other, it is impossible to have some laminations crossing blue-purple-blue and at the same time other laminations crossing red-purple-red.
(In the general case, this is also true, for only slightly more complicated reasons.)
Since we have the choice of which pair of arcs to color red and which to color blue, \emph{we take the convention that the orange curves only have blue-purple-blue crossings}, and the pictures in \cref{exch quad} are colored accordingly.

We adopt the notation $F_\purple$ for the product of the $F$-polynomials of two purple arcs shown in \cref{exch quad}, and similarly $F_\blue$ and $F_\red$.
Each orange curve shown is associated to an arc $\gamma\in T$, and $\hy_\orange$ refers to the product of the variables $\hy_\gamma$ for all $\gamma\in T$ whose associated orange curve has a blue-purple-blue intersection with the quadrilateral.
In general, a factor $\hy_\gamma$ can appear in~$\hy_\orange$ more than once, but we will consider those exchange relations where each $\hy_\gamma$ appears at most once.
In light of our coloring convention, the \newword{$F$-polynomial exchange relation} is
\begin{equation}\label{exch rel F}
F_\purple=F_\blue+\hy_\orange\cdot F_\red.
\end{equation}

\cref{exch digon} illustrates the exchange relation for arcs that share a vertex where their taggings disagree.
Each picture of the figure shows the two arcs (in purple and horizontal) being exchanged and a red and blue arc.
\begin{figure}
\begin{tabular}{ccc}
\scalebox{1}{\includegraphics{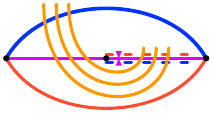}}
&\qquad\quad&
\scalebox{1}{\includegraphics{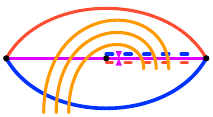}}
\end{tabular}
\caption{Exchange relations for arcs with disagreeing tags.}
\label{exch digon}
\end{figure}
We avoid complications by assuming that the two purple arcs have three distinct endpoints.
The same rule on reversing spirals of elementary laminations applies when non-shared endpoints of the purple arcs are tagged notched. 
The notation $F_\purple$ again refers to the product of the $F$-polynomials of two purple arcs shown.
The figure also shows dotted red and blue arcs (coinciding with one of the purple arcs).
The notation $F_\blue$ refers to the $F$-polynomial of the one \emph{non-dotted} blue arc and $F_\red$ refers to the $F$-polynomial of the one \emph{non-dotted} red arc.
The dotted red and blue arcs are an aid to understanding the orange curves, specifically because we care about blue-purple-blue or red-purple-red crossings, with one of these blues or reds being dotted.
It is again impossible to have both laminations crossing blue-purple-blue and laminations crossing red-purple-red, so we again take the convention that orange curves have only blue-purple-blue crossings.
The notation $\hy_\orange$ again refers to the product of the variables $\hy_\gamma$ for all $\gamma\in T$ whose associated orange curve crosses blue-purple-blue with one of these blues being dotted.
(Again, in general, a factor $\hy_\gamma$ can appear in $\hy_\orange$ more than once, but we will avoid that case.)
Once again, the exchange relation is given by \eqref{exch rel F}.

\begin{remark}\label{F or x}
\cref{exch rel F} gives the exchange relation on $F$-polynomials, interpreting \cite[Proposition~5.1]{ca4} in terms of elementary laminations.
Later, in the proof of \cref{main}, we will also need the principal-coefficients case of the exchange relation on cluster variables, which can be obtained from \cref{exch rel F} by replacing each $F$-polynomial by the corresponding cluster variable and replacing $\hy_\orange$ by $y_\orange$, the product of the $y_\gamma$  for all $\gamma\in T$ whose associated orange curve has a blue-purple-blue intersection.
\end{remark}

\subsection{Tidyness}\label{subsec:tidy}

We now describe conditions on $T$ and $\alpha$ that turn certain exchange relations into decompositions of the set of order ideals of $P_\alpha$.
We have not tried to minimize these conditions, because we can easily satisfy them using the covering arguments in later sections.
We assume as usual that an isotopy representative of $\alpha$ has been chosen to minimize intersections with arcs of~$T$.

We say that $(T,\alpha)$ is \newword{tidy} if:
\begin{itemize}
\item $T$ consists of plain-tagged arcs;
\item each triangle of $T$ has three distinct vertices and three distinct edges (so that in particular each arc of $T$ has two distinct endpoints);
\item $\alpha$ has two distinct endpoints;
\item no arc in $T$ intersects the interior of $\alpha$ more than once;
\item no arc in $T$ both intersects the interior of $\alpha$ and shares an endpoint with $\alpha$;
\item if an arc of $T$ has the same two endpoints as $\alpha$, then $\alpha$ coincides with that arc except possibly for tagging.
\end{itemize}
This tidyness condition implies the following simplifications:
\begin{itemize}
\item $T$ has no self-folded triangles, so the labels of $P_\alpha$ are all of the form $\hy_\gamma$ for $\gamma\in T$.
\item There are no repeated labels in $P_\alpha$, except in the case where $\alpha$ coincides with an arc $\beta$ (not the interior edge of a self-folded triangle) and~$\alpha$ has both ends notched, so that the label $\hy_\beta$ appears twice (as the minimal and maximal element of $P_\alpha$, see \eg \cref{in T two fig}).
\item An arc~$\gamma \in T$ is exchangeable with~$\alpha$ if and only if it crosses~$\alpha$ or it shares with~$\alpha$ an endpoint notched in~$\alpha$ (but doesn't coincide with $\alpha$ up to tagging). In other words, there is an exchange relation exchanging $x_\alpha$ and $x_\gamma$ if and only if $\hy_\gamma$ appears exactly once as a label in $P_\alpha$.
\item For any arc~$\gamma \in T$, the corresponding elementary lamination contributes at most once to~$\hy_\orange$.
\end{itemize}


We now consider the exchange relations for exchanging arcs $\alpha$ and $\gamma$ in the special case when~$(T,\alpha)$ is tidy and $\gamma \in T$ is exchangeable with~$\alpha$.
The top picture of \cref{uncomp} shows the case where $\alpha$ and~$\gamma$ cross in their interiors ($\alpha$ is the horizontal purple arc, while $\gamma$ is the vertical purple arc).
Also pictured (in gray) are the arcs of $T$ that $\alpha$ crosses or shares an endpoint with, and (in black) additional arcs that complete some triangles in $T$, and (in orange) the elementary lamination $L_\gamma$ (which fixes our color choices according to the convention that it should cross the quadrilateral as blue-purple-blue).
\begin{figure}
\scalebox{1}{\includegraphics{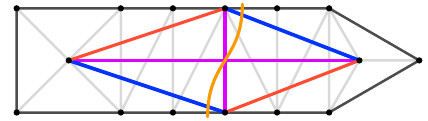}}\\[5pt]
\scalebox{1}{\includegraphics{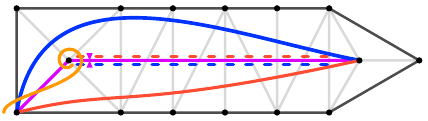}}
\caption{Exchange relations for tidy arcs and initial arcs.}
\label{uncomp}
\end{figure}
In this case, it is impossible for $\alpha$ to coincide with an arc of~$T$ except for tagging, because if so, $\alpha$ could not cross another arc $\gamma$ of $T$.
It is possible that $\alpha$ is tagged notched at either or both endpoints, in which case, the red and blue arcs at those endpoints are also notched.
However, since $(T,\alpha)$ is tidy, the endpoints of $\gamma$ are not tagged notched.

The bottom picture of \cref{uncomp} shows the case where $\alpha$ and $\gamma$ share an endpoint where they disagree on tagging, with additional grey and black arcs and orange curve as in the top picture.
In this case, it is possible that $\alpha$ is also tagged notched at the endpoint it does not share with~$\gamma$, in which case the red and blue arcs at that endpoint are also notched.
Again in this case, the endpoints of $\gamma$ are not tagged notched.

It is important to consider the question of how general the pictures in \cref{uncomp} are.
Here are the ways that $(T,\alpha)$ can differ from the pictures in the tidy case.
\begin{itemize}
\item
The sequence of triangles between endpoints of $\alpha$ can vary.
(This variation corresponds to the ways that $P_\alpha^\cross$ can be any poset whose Hasse diagram, as a graph, is a path.)
\item
One or both of the endpoints of $\alpha$ may be on the boundary.
When an endpoint~$p$ of $\alpha$ is on the boundary, one or more of the triangles incident to $p$ are deleted and correspondingly two of the arcs of $T$ incident to $p$ become boundary segments.
In this case, $\alpha$ is tagged plain at $p$, so the arcs incident to $p$ are irrelevant to the definition of $P_\alpha$.
\item
The number of triangles incident to each endpoint $p$ of $\alpha$ may vary (at least $2$ when $p$ is a puncture and at least $1$ when $p$ is on the boundary).
\item
Some black edges may be identified with each other and/or some marked points (excluding the endpoints of $\alpha)$ may be identified with each other.
\pagebreak
\item
One or both endpoints of $\alpha$ may be the puncture in a once-punctured digon formed as the union of two triangles of $T$.
\cref{uncomp digon} is a version of \cref{uncomp} with these digons at both endpoints of $\alpha$ in each picture.
\begin{figure}
\scalebox{1}{\includegraphics{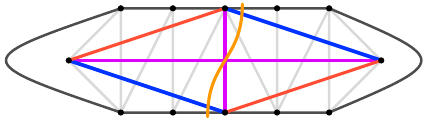}}\\[5pt]
\scalebox{1}{\includegraphics{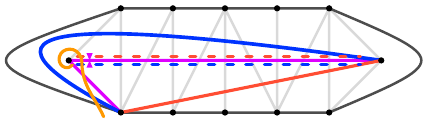}}
\caption{Exchange relations for tidy arcs and initial arcs, with digons.}
\label{uncomp digon}
\end{figure}
The case with digons is not qualitatively different, but instead simply achieves the lower-bound on the number of arcs in $T$ incident to endpoints of~$\alpha$.
(For that reason, we will continue to draw the cases without digons, as in \cref{uncomp}.)
\item
$\alpha$ may coincide with an arc of~$T$.
\end{itemize}

\subsection{Tidy exchange relation on posets}\label{subsec:F}

Assume now that~$(T,\alpha)$ is tidy and consider~$\gamma \in T$ exchangeable with~$\alpha$.
We define $\Pr$ to be the weighted poset~$P_\beta$ when there is a single red arc~$\beta$, or the disjoint union of the two weighted posets $P_\beta$ and~$P_{\beta'}$ when there are two red arcs~$\beta$ and~$\beta'$ in the exchange relation.
Note that~$F(\Pr)$ is then the product of the $F(P_\beta)$ for the (one or two) red arcs in the exchange relation.  
We define $\Pb$ similarly.  
(We could also define $\Pp$, but since $P_\gamma$ is the empty poset, $\Pp$ would always equal $P_\alpha$.)

\begin{proposition}\label{ideal decomp}
Suppose $(T,\alpha)$ is tidy and $\gamma \in T$ is exchangeable with~$\alpha$, and denote by~$e_\gamma$ the element of $P_\alpha$ labeled $\hy_\gamma$.
Then 
\begin{itemize}
\item $P_\blue$ is obtained from $P_\alpha$ by deleting all elements weakly above~$e_\gamma$,
\item $P_\red$ is obtained from $P_\alpha$ by deleting all elements weakly below~$e_\gamma$,
\item $\hy_\orange$ is the product of the weights of the elements weakly below $e_\gamma$ in~$P_\alpha$.
\end{itemize}
Hence, the weighted sum~$F(P_\alpha)$ of order ideals in $P_\alpha$ decomposes into
\begin{itemize}
\item
the weighted sum $F(\Pb)$ of order ideals in $P_\alpha$ that do not contain $e_\gamma$ and 
\item
the weighted sum $\hy_\orange\cdot F(\Pr)$ of order ideals in $P_\alpha$ that contain $e_\gamma$.
\end{itemize}
This yields the \newword{poset exchange relation}
\begin{equation}\label{ideal decomp eq}
F(P_\alpha)=F(\Pb)+\hy_\orange\cdot F(\Pr).
\end{equation}
\end{proposition}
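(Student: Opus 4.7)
The plan is to first reduce the claim to its three structural bullets about $\Pb$, $\Pr$, and $\hy_\orange$, and then establish those bullets by a geometric case analysis of the exchange relations recalled in \cref{subsec:exchange}.

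For the reduction, the identity $F(P_\alpha)=F(\Pb)+\hy_\orange\cdot F(\Pr)$ follows immediately from the three bullets via a standard order-ideal decomposition. I would partition the order ideals $I\subseteq P_\alpha$ according to whether $e_\gamma\in I$. If $e_\gamma\notin I$, then $I$ contains no element $\ge e_\gamma$ by the order ideal property, so $I$ is precisely an order ideal of the subposet of $P_\alpha$ obtained by deleting all elements weakly above $e_\gamma$; by the first bullet this subposet equals $\Pb$, and since weights are preserved the total contribution is $F(\Pb)$. If $e_\gamma\in I$, then $I$ contains every $f\le e_\gamma$, and $I\setminus\{f:f\le e_\gamma\}$ is an arbitrary order ideal of $P_\alpha$ with those elements deleted; by the second bullet this deletion equals $\Pr$, and by the third the fixed weight contribution from the mandatory lower part is exactly $\hy_\orange$, so this class contributes $\hy_\orange\cdot F(\Pr)$. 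Summing gives the claimed identity, so it remains to prove the three bullets.

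For the bullets, I would treat the two geometric cases separately: either $\alpha$ crosses $\gamma$ transversally (Case A, \cref{uncomp}) or $\alpha$ and $\gamma$ share an endpoint with disagreeing taggings (Case B, \cref{uncomp digon}). In each case the exchange quadrilateral determines (at most) two blue arcs $\beta_1,\beta_2$ and two red arcs, and by definition $\Pb=P_{\beta_1}\sqcup P_{\beta_2}$ and similarly for $\Pr$. In Case A, the element $e_\gamma$ is the unique point of $P_\alpha^\cross$ where $\alpha$ crosses $\gamma$, and each blue arc runs from an endpoint of $\alpha$ to an endpoint of $\gamma$, following $\alpha$ up to or from the crossing and then turning through the quadrilateral. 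Tracking the sequence of triangles traversed, together with the left/right cuts that determine cover relations, I would verify that $P_{\beta_1}\sqcup P_{\beta_2}$ is isomorphic as a weighted poset to $P_\alpha$ with all elements $\ge e_\gamma$ deleted; the analogous argument with the two red arcs running the other way around the quadrilateral gives the deletion of elements $\le e_\gamma$. The monomial $\hy_\orange$ records the arcs $\delta\in T$ whose elementary lamination $L_\delta$ crosses the quadrilateral blue-purple-blue; by the coloring convention of \cref{subsec:exchange} these are precisely the arcs that $\alpha$ crosses on the side of $\gamma$ opposite the blue arcs, together with $\gamma$ itself, which under the above identification are exactly the elements of $P_\alpha$ weakly below $e_\gamma$. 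Case B is handled identically, with $e_\gamma$ now on the chain attached to $P_\alpha^\cross$ at the shared notched endpoint; the degenerate sub-cases where $\alpha$ coincides up to tagging with an arc of $T$ (\cref{in T one,self-fold coinciding,in T two,in T two same,disappear1}) collapse some sides of the quadrilateral or identify some arcs, but the explicit descriptions of $P_\alpha$ in those examples make the identifications routine.

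The main obstacle is the geometric bookkeeping in Case A: matching the orientations of cover relations (up versus down, dictated by the left/right cuts along $\alpha$) on both sides of the identification, and tracking boundary endpoints versus puncture endpoints. Tidyness streamlines this considerably --- no self-folded triangles, so every label of $P_\alpha$ is of the form $\hy_\beta$ with $\beta\in T$; no repeated labels, so $e_\gamma$ is unambiguously defined; and each elementary lamination contributes at most once to $\hy_\orange$ --- reducing the whole argument to a finite local analysis around the crossing or shared endpoint, carried out by direct inspection of the relevant triangles.
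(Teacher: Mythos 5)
Your overall architecture is the same as the paper's: you partition the order ideals of $P_\alpha$ according to whether they contain $e_\gamma$ to reduce \eqref{ideal decomp eq} to the three structural bullets, and you then propose to establish those bullets by a case analysis on whether $\gamma$ crosses $\alpha$ or shares a notched endpoint with it. The reduction paragraph is correct and is exactly how the paper concludes its proof, and your identification of the two geometric cases matches the paper's Case 1 and Case 2.

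The problem is that the one place where you commit to a concrete geometric answer — the characterization of $\hy_\orange$ — is wrong, and that characterization is the entire mathematical content of the proposition. You assert that the orange arcs are ``precisely the arcs that $\alpha$ crosses on the side of $\gamma$ opposite the blue arcs, together with $\gamma$ itself.'' First, the two blue arcs are opposite sides of the exchange quadrilateral and sit on opposite sides of $\gamma$, so the phrase does not pick out a well-defined set. More importantly, the elements of $P_\alpha$ weakly below $e_\gamma$ are not the crossing points on one side of $e_\gamma$ along $\alpha$: since the Hasse diagram of $P^\cross_\alpha$ is a fence, the set $\set{e_\delta}{e_\delta\le e_\gamma}$ consists of the maximal \emph{descending runs} emanating from $e_\gamma$ in both directions along $\alpha$, and the correct condition (which is what the paper's five subcases verify) is that $\delta$, $\gamma$, and every arc crossed in between all share a common endpoint on the appropriate side — equivalently, all the intervening corner cuts happen at one vertex. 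It is exactly this shared-endpoint condition that simultaneously forces $e_\delta\le e_\gamma$, lets $L_\delta$ enter and exit the quadrilateral through the two blue sides, and lets the red arcs be isotoped off of $\delta$ (which is what makes $\Pr$ equal to $P_\alpha$ minus the elements weakly below $e_\gamma$). In addition, when $\alpha$ is notched at an endpoint, orange arcs can come from the attached chain and are not crossed by $\alpha$ at all, so ``arcs that $\alpha$ crosses'' omits them entirely. As written, your argument would output the wrong monomial $\hy_\orange$ and the wrong posets $\Pr$, $\Pb$ in any example where $P^\cross_\alpha$ is not a chain (e.g.\ \cref{orange1}) or where $\alpha$ carries a notch, so the deferred ``direct inspection'' would contradict the answer you have recorded rather than confirm it.
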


\begin{example}\label{oranges}
To illustrate \cref{ideal decomp} and its proof, we give a series of examples in Figures~\ref{orange1}--\ref{orange8}.
Each figure has purple, red, and blue arcs as in \cref{uncomp} or \cref{uncomp digon}.
The top-left picture shows the arcs of $T$ that intersect the purple arc $\alpha$, colored orange if the corresponding laminations (with spirals reversed as appropriate) have blue-purple-blue intersections and colored gray otherwise.
The top-right picture shows the corresponding laminations, with the same color coding.  
The purple arc $\gamma$ that is being exchanged with $\alpha$ will always appear orange in the pictures, so we identify $\gamma$ explicitly in the figures.
The arc $\gamma$ can also be located using the positions of the red and blue arcs.
In other cases where an arc should be two colors (either red and orange or purple and orange), both colors are shown side by side.
Below the pictures, we identify $\gamma$, give the monomial $\hy_\orange$, and picture the posets $P_\alpha$, $P_\blue$, and $P_\red$.

\begin{figure}[p]
\begin{tabular}{cc}
\scalebox{1}{\includegraphics{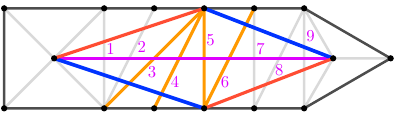}}
&
\scalebox{1}{\includegraphics{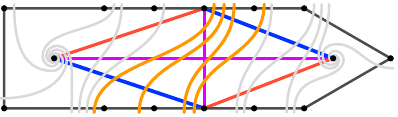}}\\[5pt]
Arc $5$ is $\gamma$
&
$\hy_\orange=\hy_3\hy_4\hy_5\hy_6$\\[5pt]
\end{tabular}
\begin{tabular}{ccc}
\scalebox{1}{\includegraphics{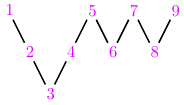}}
&\hspace*{20pt}
\scalebox{1}{\includegraphics{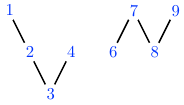}}
\hspace*{20pt}&
\scalebox{1}{\includegraphics{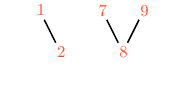}}\\
$P_\alpha$&$P_\blue$&$P_\red$
\end{tabular}
\caption{First example of \cref{ideal decomp}.}
\label{orange1}
\end{figure}

\begin{figure}[p]
\begin{tabular}{cc}\\[5pt]
\scalebox{1}{\includegraphics{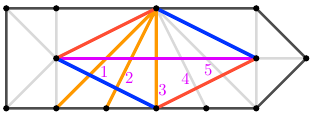}}
&
\scalebox{1}{\includegraphics{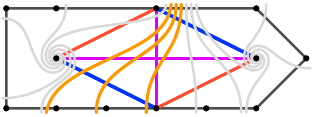}}\\[10pt]
Arc $3$ is $\gamma$
&
$\hy_\orange=\hy_1\hy_2\hy_3$\\[5pt]
\end{tabular}
\begin{tabular}{ccc}
\scalebox{1}{\includegraphics{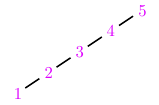}}
&\hspace*{20pt}
\scalebox{1}{\includegraphics{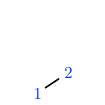}}
\hspace*{20pt}&
\scalebox{1}{\includegraphics{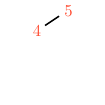}}\\
$P_\alpha$&$P_\blue$&$P_\red$
\end{tabular}
\caption{Second example of \cref{ideal decomp}.}
\label{orange2}
\end{figure}

\begin{figure}[p]
\begin{tabular}{cc}\\[5pt]
\scalebox{1}{\includegraphics{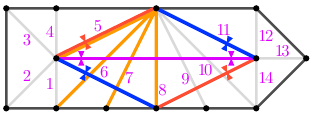}}
&
\scalebox{1}{\includegraphics{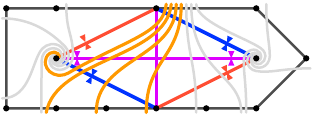}}\\[10pt]
Arc $8$ is $\gamma$
&
$\hy_\orange=\hy_5\hy_6\hy_7\hy_8$\\[5pt]
\end{tabular}
\begin{tabular}{ccc}
\scalebox{1}{\includegraphics{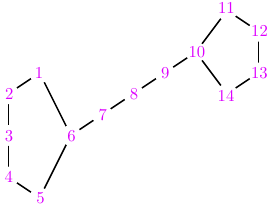}}
&\hspace*{20pt}
\scalebox{1}{\includegraphics{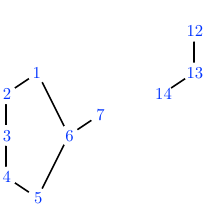}}
\hspace*{20pt}&
\scalebox{1}{\includegraphics{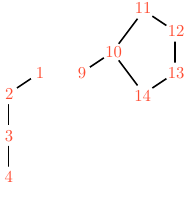}}\\
$P_\alpha$&$P_\blue$&$P_\red$
\end{tabular}
\caption{Third example of \cref{ideal decomp}.}
\label{orange3}
\end{figure}

\begin{figure}[p]
\begin{tabular}{cc}
\scalebox{1}{\includegraphics{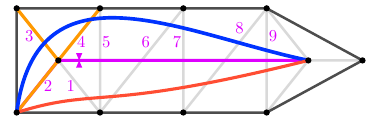}}
&
\scalebox{1}{\includegraphics{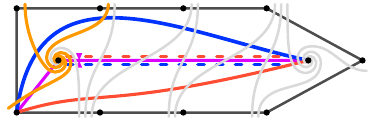}}\\
Arc $2$ is $\gamma$
&
$\hy_\orange=\hy_2\hy_3\hy_4$
\end{tabular}
\begin{tabular}{ccc}
\scalebox{1}{\includegraphics{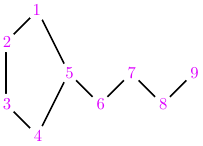}}
&\hspace*{20pt}
\scalebox{1}{\includegraphics{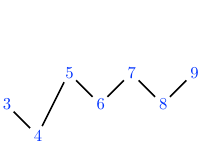}}
\hspace*{20pt}&
\scalebox{1}{\includegraphics{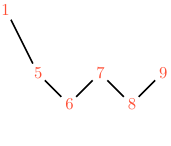}}\\
$P_\alpha$&$P_\blue$&$P_\red$
\end{tabular}
\caption{Fourth example of \cref{ideal decomp}.}
\label{orange4}
\end{figure}

\begin{figure}[p]
\begin{tabular}{cc}\\[10pt]
\scalebox{1}{\includegraphics{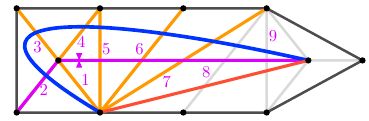}}
&
\scalebox{1}{\includegraphics{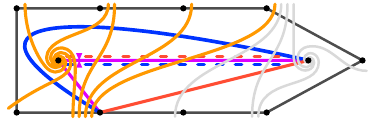}}\\
Arc $1$ is $\gamma$
&
$\hy_\orange=\hy_1\hy_2\hy_3\hy_4\hy_5\hy_6\hy_7$
\end{tabular}
\begin{tabular}{ccc}
\scalebox{1}{\includegraphics{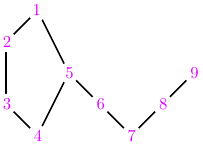}}
&\hspace*{20pt}
\scalebox{1}{\includegraphics{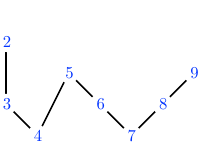}}
\hspace*{20pt}&
\scalebox{1}{\includegraphics{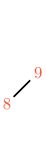}}\\
$P_\alpha$&$P_\blue$&$P_\red$
\end{tabular}
\caption{Fifth example of \cref{ideal decomp}.}
\label{orange5}
\end{figure}

\begin{figure}[p]
\begin{tabular}{cc}\\[10pt]
\scalebox{1}{\includegraphics{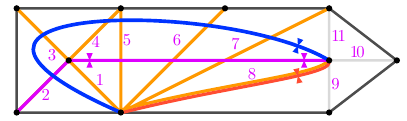}}
&
\scalebox{1}{\includegraphics{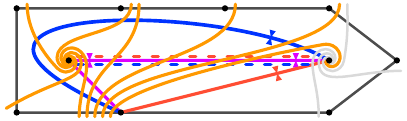}}\\
Arc $1$ is $\gamma$
&
$\hy_\orange=\hy_1\hy_2\hy_3\hy_4\hy_5\hy_6\hy_7\hy_8$
\end{tabular}
\begin{tabular}{ccc}
\scalebox{1}{\includegraphics{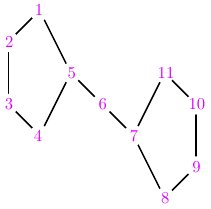}}
&\hspace*{20pt}
\scalebox{1}{\includegraphics{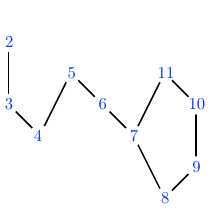}}
\hspace*{20pt}&
\scalebox{1}{\includegraphics{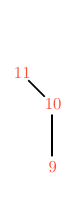}}\\
$P_\alpha$&$P_\blue$&$P_\red$
\end{tabular}
\caption{Sixth example of \cref{ideal decomp}.}
\label{orange6}
\end{figure}

\begin{figure}
\begin{tabular}{cc}
\scalebox{1}{\includegraphics{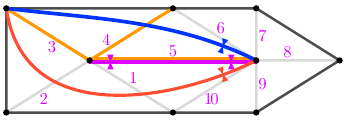}}
&
\scalebox{1}{\includegraphics{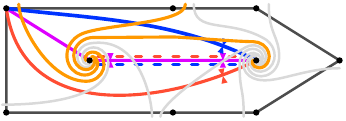}}\\
Arc $3$ is $\gamma$
&
$\hy_\orange=\hy_3\hy_4\hy_5$
\end{tabular}
\begin{tabular}{ccc}
\scalebox{1}{\includegraphics{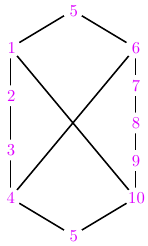}}
&\hspace*{20pt}
\scalebox{1}{\includegraphics{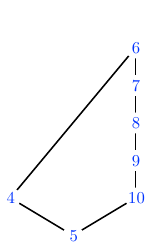}}
\hspace*{20pt}&
\scalebox{1}{\includegraphics{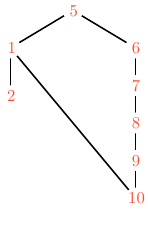}}\\
$P_\alpha$&$P_\blue$&$P_\red$
\end{tabular}
\caption{Seventh example of \cref{ideal decomp}.}
\label{orange7}
\end{figure}

\begin{figure}
\begin{tabular}{cc}\\[10pt]
\scalebox{1}{\includegraphics{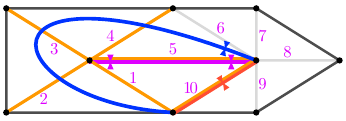}}
&
\scalebox{1}{\includegraphics{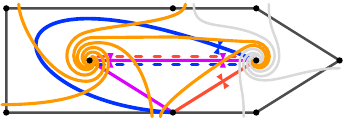}}\\
Arc $1$ is $\gamma$
&
$\hy_\orange=\hy_1\hy_2\hy_3\hy_4\hy_5\hy_{10}$
\end{tabular}
\begin{tabular}{ccc}
\scalebox{1}{\includegraphics{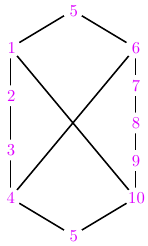}}
&\hspace*{20pt}
\scalebox{1}{\includegraphics{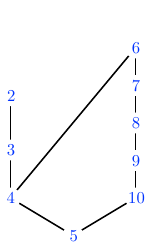}}
\hspace*{20pt}&
\scalebox{1}{\includegraphics{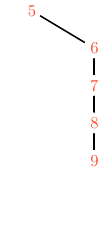}}\\
$P_\alpha$&$P_\blue$&$P_\red$
\end{tabular}
\caption{Eighth example of \cref{ideal decomp}.}
\label{orange8}
\end{figure}
\end{example}

\pagebreak

\begin{proof}[Proof of \cref{ideal decomp}]
We continue the notation $e_\delta$ for the element of $P_\alpha$ labeled $\hy_\delta$, for $\delta\in T$.
In the one case where there are two elements of $P_\alpha$ labeled $\hy_\delta$, namely the case where $\alpha$ coincides with $\delta$ except that $\alpha$ has both endpoints tagged notched, we take $e_\delta$ to mean the bottom element of $P_\alpha$, not the top element.

%

For the statement about $\hy_\orange$, we need to show that, for every arc $\delta\in T$, we have $e_\delta\le e_\gamma$ if and only if the lamination associated to $\delta$ (with spirals reversed where it shares an endpoint with $\alpha$ that is notched in $\alpha$) has a blue-purple-blue intersection, or equivalently, $\hy_\delta$ is a factor of~$\hy_\orange$.
It will also be clear that these factors appear with multiplicity~$1$.
(In the case where~$\alpha$ coincides with $\delta$ except that $\alpha$ has both endpoints notched, the top element of $P_\alpha$ labeled~$\hy_\delta$ is of course never below~$e_\gamma$.)
One of the purple arcs is $\alpha$, so if the lamination associated to $\delta\in T$ has a blue-purple-blue intersection, then in particular $\delta$ has some intersection with $\alpha$, so we will consider only arcs of $T$ that intersect~$\alpha$.
It is impossible for $\alpha$ to coincide with $\gamma$ except for tagging, because in that case~$\hy_\gamma$ occurs as a label in $P_\alpha$ either 0 or 2 times.
This contradicts the hypothesis that~$(T,\alpha)$ is tidy and $\gamma \in T$ is exchangeable with~$\alpha$.

For the statement about~$\Pr$, suppose $\rho$ is one of the red arcs.
We can choose an isotopy representative of $\rho$ such that every element $e_\delta$ of $P_\rho$ is also an element of $P_\alpha$.
Furthermore, two elements of $P_\rho$ form a cover relation in $P_\rho$ if and only if they form a cover relation in $P_\alpha$.
If $\rho$ and~$\sigma$ are the two red arcs, then $P_\rho$ and $P_\sigma$ are disjoint subsets of $P_\alpha$, and $P_\red$ is their disjoint union.
We will show that each $e_\delta$ appears in $P_\red$ if and only if $e_\delta\not\le e_\gamma$ in $P_\alpha$.
When we have showed that, since we obtain $P_\red$ from $P_\alpha$ by deleting a lower order ideal, and since the cover relations in $P_\red$ are the same as in $P_\alpha$, we can conclude that the partial order on $P_\red$ is the restriction of the partial order on $P_\alpha$, as desired.

There are two main cases to consider:  The arc $\gamma$ may cross $\alpha$ or it may share an endpoint with~$\alpha$ where~$\alpha$ is tagged notched.  
In either case, if $\delta\in T$ does not intersect $\alpha$ or if $\delta$ shares an endpoint of $\alpha$ where $\alpha$ is tagged plain, then neither $P_\alpha$ nor $P_\red$ contain an element $e_\delta$, and the associated lamination can only make red-purple-blue intersections.
See Figures~\ref{orange1}, \ref{orange2}, \ref{orange4}, and~\ref{orange5}.
Thus we will not consider such arcs $\delta$ below.

In both cases, and in every case for $\delta$, we give a simple condition on $\delta$ that is easily equivalent to $e_\delta\le e_\gamma$, to the lamination associated to $\delta$ having a blue-purple-blue intersection, and to $e_\delta$ not being an element of $P_\red$.
To avoid unnecessary repetition of words, in every case, we simply state below what that condition is.

\begin{case}
$\gamma$ crosses $\alpha$.
In this case, $\alpha$ does not coincide with an arc of $T$.

\begin{subcase}
$\delta$ crosses $\alpha$.
See Figures~\ref{orange1}, \ref{orange2}, and~\ref{orange3}.
As we move from $e_\gamma$ along $\alpha$ to $e_\delta$, each arc that we cross has a left and right endpoint.
The condition is that $\gamma$ and $\delta$ share their right endpoints and each arc of $T$ crossed in between $\gamma$ and $\delta$ also shares that right endpoint.
\end{subcase}

\begin{subcase}
$\delta$ shares an endpoint $p$ with $\alpha$ such that $\alpha$ is tagged notched at~$p$. 
See \cref{orange3}.
Let $t$ be the triangle in $T$ that has $p$ as a vertex and contains part of $\alpha$ in its interior.
(This~$t$ is bounded by the last arc of $T$ crossed by $\alpha$ and by two arcs of $T$ that share the endpoint $p$ with $\alpha$.)
The condition is that all three of the following statements hold:
every arc crossed by~$\alpha$ between~$e_\gamma$ and $p$ shares its right endpoint with $\gamma$; 
the arc $\delta$ is an edge of $t$; 
and $\delta$ shares its right endpoint with $\gamma$.
(The notion of ``right endpoint'' for $\delta$ is from the point of view of someone moving along~$\alpha$ from $e_\gamma$ to $p$.
Thus $p$ is the \emph{left} endpoint of $\delta$.)
Equivalently, there is a much simpler condition:~$\delta$ coincides with the red arc whose endpoint is $p$, except that that red arc is notched at $p$ where~$\delta$ is plain.
(To consider blue-purple-blue intersections in this case, the reversal of spirals at the notched endpoint is crucial.)
\end{subcase}
\end{case}

\begin{case}
$\gamma$ shares an endpoint $p$ with $\alpha$ where $\alpha$ is tagged notched.
This includes the cases where~$\alpha$ coincides with $\delta$ except for tagging, and in those cases, no arcs in $T$ cross $\alpha$.

\begin{subcase}
$\delta$ shares the endpoint $p$.
See Figures~\ref{orange4}, \ref{orange5}, \ref{orange6}, \ref{orange7}, and~\ref{orange8}.
The condition is that one can move from~$\gamma$ to $\delta$ clockwise around $p$ without crossing $\alpha$.

We clarify the condition in cases where $\delta$ coincides with $\alpha$ except for tagging:
We have already supposed that $\alpha$ is tagged notched at $p$.
If $\alpha$ coincides with $\delta$ and is tagged plain at its other endpoint, then there is no element $e_\delta$ in $P_\alpha$ or $P_\red$.
If $\alpha$ is tagged notched at both ends and otherwise coincides with $\delta$, recall that the notation $e_\delta$ means the bottom element of $P_\alpha$, not the top element.
(In any case, the top element is not below $e_\gamma$.)
In this case, we consider the condition to be fulfilled (because, moving from~$\gamma$ to $\delta$ clockwise around $p$, we arrive at $\alpha$ but do not cross it.
\end{subcase}

\begin{subcase}
$\delta$ crosses $\alpha$.  
See Figures~\ref{orange4}, \ref{orange5}, and~\ref{orange6}.
Again, arcs crossed by $\alpha$ have a left endpoint and a right endpoint from the point of view of someone moving along~$\alpha$ from $e_\delta$ to~$p$.
Let~$t$ be the triangle in $T$ that has $p$ as a vertex and contains part of $\alpha$ in its interior.
The condition is that the following three statements hold:
the arc $\gamma$ is an edge of $t$; 
every arc crossed by $\alpha$ between $e_\delta$ and $p$ shares its left endpoint with $\gamma$; 
and $\delta$ shares its left endpoint with~$\gamma$.
(The ``left endpoint'' of $\gamma$ is the other endpoint, not $p$.)
\end{subcase}

\begin{subcase}
$\alpha$ is also tagged notched at its other endpoint $q$ and $\delta$ shares the endpoint $q$ with~$\alpha$.
See Figures~\ref{orange5}, \ref{orange6}, \ref{orange7}, and~\ref{orange8}.
Let $u$ be the triangle of $T$ having $q$ as a vertex and containing part of $\alpha$ in its interior.
We use the terms left and right from the point of view of someone moving along $\alpha$ from $q$ to $p$.
The condition is that the following four statements hold:
the arc $\gamma$ is an edge of $t$;
the arc $\delta$ is an edge of $u$; 
every arc of $T$ crossed by $\alpha$ (if there are any) shares its left endpoint with~$\gamma$; 
and $\delta$ shares its left endpoint with $\gamma$.
(The left endpoint of $\delta$ is the other endpoint, not~$q$.)
\end{subcase}
\end{case}

This concludes our proof of the statement about~$\Pr$ and the statement about~$\yo$.
Reversing the orientation of $\S$ exchanges the red arcs with the blue arcs and replaces the partial order $P_\alpha$ with the dual partial order.
The statement about $\Pb$ thus follows from the statement about~$\Pr$ and this orientation-reversal symmetry.
Finally, decomposing the order ideals of~$P_\alpha$ depending on whether or not they contain~$e_\gamma$ yields the poset exchange relation of \cref{ideal decomp eq}.
\end{proof}

%


\subsection{$\g$-vectors in tidy exchange relations}\label{subsec:shear}

We next describe the relation among the $\g$-vectors appearing in the exchange.
We again assume that~$(T,\alpha)$ is tidy and consider~$\gamma \in T$ exchangeable with~$\alpha$.
Write $\g_\blue$ for the product of monomials $\g_\beta$ associated to the blue arcs, and similarly~$\g_\red$.
The notation of specialization to ``$y=1$'' in the following proposition is shorthand for setting every tropical variable to $1$.

\begin{proposition}\label{g prop}
If~$(T,\alpha)$ is tidy and $\gamma \in T$ is exchangeable with~$\alpha$, then 
\begin{enumerate}[\quad\rm\bf1.]
\item\label{g prop blue}
$\g_\blue=x_\gamma\cdot\g_\alpha$ and 
\item\label{g prop red}
$\g_\red=x_\gamma\cdot\g_\alpha\cdot\hy_\orange\big|_{y=1}$.
\end{enumerate}
\end{proposition}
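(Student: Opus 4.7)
The plan is to reformulate both equalities as identities of shear coordinates and verify them by a local analysis.

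By equation \eqref{g def} and using that $\hy_\sigma|_{y=1} = \prod_\epsilon x_\epsilon^{b_{\epsilon\sigma}}$, the equalities $\g_\blue = x_\gamma \cdot \g_\alpha$ and $\g_\red = x_\gamma \cdot \g_\alpha \cdot \hy_\orange|_{y=1}$ amount, for every $\delta \in T$, to the two shear coordinate identities
\begin{align*}
b_\delta(T, \kappa(\alpha)) - \textstyle\sum_\beta b_\delta(T, \kappa(\beta)) &= [\delta = \gamma], \\
b_\delta(T, \kappa(\alpha)) - \textstyle\sum_\rho b_\delta(T, \kappa(\rho)) &= [\delta = \gamma] + \textstyle\sum_{\sigma \in \orange} b_{\delta\sigma},
\end{align*}
where $\beta$, $\rho$, and $\sigma$ range over the blue arcs, the red arcs, and the orange-contributing arcs respectively. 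Since $b_\delta(T, \cdot)$ is additive on multi-curves, each side is the shear coordinate with respect to $\delta$ of an explicit formal sum of curves.

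The argument is local. Let $N$ be a small neighborhood of the quadrilateral in \cref{uncomp} (or the digon in \cref{exch digon}), enlarged to include the spiral neighborhoods of any notched endpoints of $\alpha$. Outside $N$, the multi-curves $\kappa(\alpha)$, $\sum_\beta \kappa(\beta)$, and $\sum_\rho \kappa(\rho)$ coincide up to isotopy, so for any arc $\delta \in T$ disjoint from $N$ the shear coordinates cancel and both identities hold trivially. Inside $N$, the plan is to verify the identities case by case, organized according to (a) whether $\alpha$ and $\gamma$ cross or share a notched endpoint, and (b) the plain or notched tagging of $\alpha$ at each of its endpoints. The first identity will record the elementary fact that $\kappa(\alpha) + \kappa(\gamma)$ can be isotoped onto $\sum_\beta \kappa(\beta)$ by smoothing the interior crossing in the ``blue'' direction, which removes exactly one transverse intersection with $\gamma$. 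The second identity will record the dual smoothing: isotoping $\kappa(\alpha) + \kappa(\gamma)$ onto $\sum_\rho \kappa(\rho)$ requires sweeping across the two purple-red corners of $N$, which introduces additional transverse intersections. By the coloring convention of \cref{subsec:exchange}, these extra intersections occur precisely with the orange arcs, each with multiplicity one, producing the term $\sum_{\sigma \in \orange} b_{\delta\sigma}$.

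The main obstacle will be the careful spiral-and-sign accounting at notched endpoints: at such an endpoint $\kappa(\alpha)$ spirals counterclockwise while the $\kappa(\beta), \kappa(\rho)$ associated to plain-tagged arcs spiral clockwise, and in the digon case of \cref{exch digon} the spirals of several purple and red/blue curves sharing a notched endpoint interact and partly cancel. A finite but somewhat tedious case analysis, bounded by the tidyness hypothesis (which rules out self-folded triangles, double crossings, and most endpoint coincidences), will complete the verification; tidyness is what ensures each relevant arc of $T$ contributes with the correct multiplicity.
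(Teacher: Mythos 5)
Your reformulation of both identities as entrywise statements about shear coordinates,
\[
b_\delta(T,\kappa(\alpha))-\sum_\beta b_\delta(T,\kappa(\beta))=[\delta=\gamma],
\qquad
b_\delta(T,\kappa(\alpha))-\sum_\rho b_\delta(T,\kappa(\rho))=[\delta=\gamma]+\sum_{\sigma}b_{\delta\sigma},
\]
is exactly the paper's starting point, and your plan for part 1 --- the blue curves $\kappa(\beta)$, $\kappa(\beta')$ follow $\kappa(\alpha)$ everywhere outside the exchange quadrilateral (or digon), so only the $\gamma$-entry requires a finite local check --- is precisely how the paper proves part 1 (four, resp.\ two, subcases in \cref{gamma quad,gamma digon}).

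Part 2, however, has a genuine gap: the identity for $\g_\red$ is \emph{not} local to a small neighborhood $N$ of the quadrilateral, so the claim that outside $N$ ``the shear coordinates cancel and both identities hold trivially'' fails. A red arc ends at an endpoint $p$ of $\gamma$, so $\kappa$ of that red arc separates from $\kappa(\alpha)$ as soon as $\alpha$ enters the fan of orange arcs around $p$; by \cref{ideal decomp} the orange arcs are exactly those $\delta$ with $e_\delta\le e_\gamma$, and this fan can extend arbitrarily far from the quadrilateral (in \cref{orange1}, arcs $3$, $4$, $6$ are orange but are not edges of the quadrilateral containing $\gamma=5$). Moreover, the accounting ``extra transverse intersections occur precisely with the orange arcs, each with multiplicity one, producing $\sum_\sigma b_{\delta\sigma}$'' conflates intersection numbers with exchange-matrix entries: the exponent of $x_\delta$ in $\hy_\orange\big|_{y=1}$ is $\sum_\sigma b_{\delta\sigma}$ (sum over orange $\sigma$), which is supported not on the orange arcs but on every arc sharing a triangle with an orange arc --- in particular on the non-orange arcs $\eta,\zeta,\eta',\zeta'$ bounding the ends of the fans, where the correct exponents are $+1$ and $-1$. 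Verifying part 2 therefore requires matching, arc by arc along the entire fan, a telescoping sum of exchange-matrix entries against the difference of shear coordinates of $\kappa(\alpha)$ and the red curves; this is the actual content of the paper's proof (\cref{hy quad,zeta eta,alpha quad} for the crossing case and five further subcases for the shared-notched-endpoint case), and it is not reached by a local analysis at the quadrilateral. The gap is repairable --- enlarge the region to contain both fans and carry out the telescoping computation --- but as written your plan would skip exactly the arcs at which the second identity is nontrivial.
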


The proof of \cref{g prop} has a similar feel as that of \cref{ideal decomp}.
We separate the proofs of the two items.

\begin{proof}[Proof of \cref{g prop}.\ref{g prop blue}]
We show that $\g_\blue=x_\gamma\cdot\g_\alpha$, in two cases, depending on whether~$\gamma$ crosses $\alpha$ or shares an endpoint of $\alpha$ where $\alpha$ is tagged notched.
In either case, let $Q$ be the quadrilateral consisting of $\gamma$ and the two triangles of $T$ having $\gamma$ as an edge.

\setcounter{case}{0}
\begin{case}
\label{g prop blue case 1}
$\gamma$ crosses $\alpha$.
Write $\beta$ and $\beta'$ for the two blue arcs.
The curve $\kappa(\beta)$ follows $\kappa(\alpha)$ from near one endpoint of $\alpha$ through the same intersections with triangles of $T$ until they both enter~$Q$ through the same side of $Q$ and then both cross $\gamma$ before $\kappa(\beta)$ ends near an endpoint of $\gamma$ and~$\kappa(\alpha)$ continues.
The curve $\kappa(\beta')$  has the same relationship with $\kappa(\alpha)$, but starting from the other endpoints.
We see that the shear coordinates of $\kappa(\beta)$ and $\kappa(\beta')$ add up to a vector that agrees with the shear coordinates of $\kappa(\alpha)$ except at the $\gamma$-entry.
It remains only to check, in all four subcases of how $\kappa(\alpha)$ can cross $Q$, that the $\gamma$-entry of the sum of the shear coordinates of $\kappa(\beta)$ and $\kappa(\beta')$ is one \emph{less} than the $\gamma$-entry of the shear coordinates of $\kappa(\alpha)$.
This simple check is shown in \cref{gamma quad}.
In the figure, $\kappa(\alpha)$ and the contribution to its shear coordinates at $\gamma$ are shown in purple, while $\kappa(\beta)$ and $\kappa(\beta')$ and the contributions to their shear coordinates at $\gamma$ are shown in blue.
\begin{figure}
\includegraphics{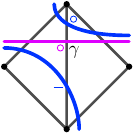}
\qquad
\includegraphics{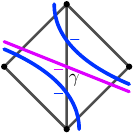}
\qquad
\includegraphics{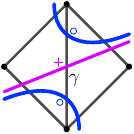}
\qquad
\includegraphics{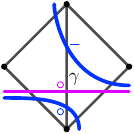}
\caption{The $\gamma$-entry of $\g_\alpha$ and $\g_\blue$, when $\gamma$ crosses $\alpha$, \cref{g prop blue case 1} in the proof of \cref{g prop}.\ref{g prop blue}.}
\label{gamma quad}
\end{figure}
\end{case}

\begin{case}
\label{g prop blue case 2}
$\gamma$ shares an endpoint of $\alpha$ where $\alpha$ is tagged notched.
Let $\beta$ be the one blue arc.
Then~$\kappa(\beta)$ follows $\kappa(\alpha)$ from near the other endpoint (not the shared endpoint) of $\alpha$ through the same intersections with triangles of $T$ until they both enter $Q$ through the same side of $Q$.
At that point, $\kappa(\beta)$ turns right and leaves $Q$ just to the left of the other endpoint (not the shared endpoint) of $\gamma$ and either hits a boundary segment or spirals clockwise into that endpoint, while~$\kappa(\alpha)$ turns left and spirals counterclockwise into the shared endpoint.
Thus the shear coordinates of $\kappa(\beta)$ agree with the shear coordinates of $\kappa(\alpha)$ except at the $\gamma$-entry.
We check that the $\gamma$-entry of the shear coordinates of $\kappa(\beta)$ is one \emph{less} than the $\gamma$-entry of the shear coordinates of $\kappa(\alpha)$.
The check is again simple and is shown in \cref{gamma digon}, where there are two subcases corresponding to the two possible sides through which $\kappa(\beta)$ and $\kappa(\alpha)$ might enter $Q$.
\qedhere
\begin{figure}
\includegraphics{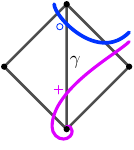}
\qquad
\includegraphics{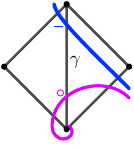}
\caption{The $\gamma$-entry of $\g_\alpha$ and $\g_\blue$, when $\gamma$ shares an endpoint with  $\alpha$, notched in $\alpha$, \cref{g prop blue case 2} in the proof of \cref{g prop}.\ref{g prop blue}.}
\label{gamma digon}
\end{figure}
\end{case}
\end{proof}

\begin{proof}[Proof of \cref{g prop}.\ref{g prop red}]
To show that $\g_\red=x_\gamma\cdot\g_\alpha\cdot\hy_\orange\big|_{y=1}$, it is convenient to rewrite the identity as $\g_\red/\g_\alpha=x_\gamma\cdot\hy_\orange\big|_{y=1}$.

Given an orange arc (an arc $\delta$ such that $e_\delta\le e_\gamma$ in $P_\alpha$), there are two triangles of $T$ containing~$\delta$ as an edge. 
Since $\hy_\delta=y_\delta\prod_{\beta\in T}x_\beta^{b_{\beta\delta}}$, the Laurent monomial $\hy_\delta\big|_{y=1}$ is the product $x_\beta x_{\beta'}$ for the two edges $\beta$ and $\beta'$ (one in each triangle) that are counterclockwise of $\delta$, divided by the similar product of the two edges that are clockwise, except that if any of these edges are boundary segments, then we leave them out.
Thus $\hy_\orange\big|_{y=1}$ is the product of these contributions, over all orange edges.

Because the exponent vector of $\g_\alpha$ is the negative of the shear coordinates of $\kappa(\alpha)$ and $\g_\red$ is the negative of the (sum of the) shear coordinates of $\kappa$ applied to the red arc(s), we can rephrase the assertion that $\g_\red/\g_\alpha=x_\gamma\cdot\hy_\orange\big|_{y=1}$ in terms of shear coordinates.
Specifically, taking the shear coordinates of $\kappa(\alpha)$ minus the (sum of the) shear coordinate(s) of $\kappa$ applied to the red arc(s), the entry at each arc must equal the corresponding exponent in $\hy_\orange\big|_{y=1}$, plus $1$ in the entry for $\gamma$.

We will verify that this equality between the difference of shear coordinates and the exponents on $x_\gamma\cdot\hy_\orange\big|_{y=1}$.
We again argue in two cases, depending on whether $\gamma$ crosses $\alpha$ or shares an endpoint of $\alpha$ where $\alpha$ is tagged notched.

\setcounter{case}{0}
\begin{case}
\label{g prop red case 1}
$\gamma$ crosses $\alpha$.
From the proof 
of \cref{ideal decomp} (Case 1), we see that every orange arc~$\delta$ shares an endpoint with $\gamma$.
Indeed, for each endpoint $p$ of $\gamma$, there is a sequence (possibly a singleton) containing $\gamma$ and all arcs of $T$ that are clockwise of $\gamma$ at $p$ up to a particular arc (not skipping any arcs).
The sequence does not extend beyond the red arc, but its last entry might be the red arc (in which case, the red arc and $\alpha$ are tagged notched at the other end of the red arc, opposite from $p$).
If $\delta$ is this last orange arc at $p$, consider the triangle $t$ that has $\delta$ as an edge and is clockwise of $\delta$ at $p$.
Let $\eta$ be the edge of $t$ that shares the endpoint $p$ with $\delta$ and let $\zeta$ be the other edge.
Similarly, let $\eta'$ and $\zeta'$ be the analogous arcs associated with the other endpoint of $\gamma$.

There are three subcases, depending on whether there is a non-singleton sequence of arcs at neither end of $\gamma$, at one end, or at both.
\cref{hy quad} shows, in these three cases, all of the contributions, positive and negative, to the exponents of cluster variables in $x_\gamma\cdot\hy_\orange\big|_{y=1}$.
These contributions are on the orange arcs and the arcs $\eta$, $\zeta$, $\eta'$, and $\zeta'$.
\begin{figure}
\scalebox{0.949}{\includegraphics{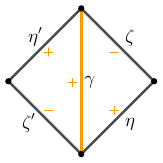}}
\,\,
\scalebox{0.949}{\includegraphics{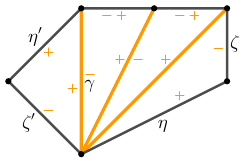}}
\,\,
\scalebox{0.949}{\includegraphics{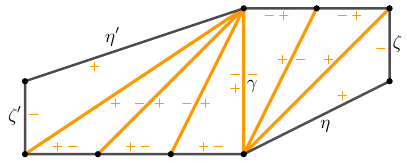}}
\caption{Exponents in $x_\gamma\cdot\hy_\orange\big|_{y=1}$, \cref{g prop red case 1} in the proof of \cref{g prop}.\ref{g prop red}.}
\label{hy quad}
\end{figure}

The exponent of $x_\eta$ in $\hy_\orange\big|_{y=1}$ is $1$ and the exponent of $x_\zeta$ is $-1$.
Since $\delta$ is the last orange arc, $\kappa(\alpha)$ does not cross $\eta$, but rather enters the triangle $t$ through $\zeta$.
(If $\delta$ coincides with the red arc, we are using the fact that the other end of the red arc is notched.)
There are two possibilities for where $\kappa(\alpha)$ comes from before crossing $\zeta$.
\cref{zeta eta} shows that the shear coordinates of $\kappa(\alpha)$ minus the sum of the shear coordinates of $\kappa$ applied to the red arcs has the correct entry at $\zeta$ and~$\eta$.
\begin{figure}
\scalebox{0.949}{\includegraphics{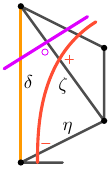}}
\qquad\qquad
\scalebox{0.949}{\includegraphics{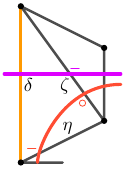}}
\caption{Shear coordinates at $\zeta$ and $\eta$, \cref{g prop red case 1} in the proof of \cref{g prop}.\ref{g prop red}.}
\label{zeta eta}
\end{figure}
(It is possible that $\eta$ is a boundary segment, in which case the contribution to $\eta$ disappears both in the shear coordinates and in $\hy_\orange\big|_{y=1}$.)
The same is true replacing $\zeta$ and $\eta$ with $\zeta'$ and~$\eta'$.
Since $\kappa(\alpha)$ coincides with $\kappa$ of one of the red arcs until they both cross $\zeta$, they have the same shear coordinates for every arc of $T$ crossed by $\kappa(\alpha)$ before it crosses $\zeta$.

It remains only to show that the shear coordinates of $\kappa(\alpha)$ minus the sum of the shear coordinates of $\kappa$ applied to the red arcs has the correct entries at the orange arcs.
Indeed, $\kappa$ applied to the red arcs has is disjoint from the orange arcs, so the point is the shear coordinates of $\kappa(\alpha)$.
This shear coordinate computation is shown in \cref{alpha quad}, in the same three cases as \cref{hy quad}, and with the same result.
\begin{figure}
\scalebox{0.949}{\includegraphics{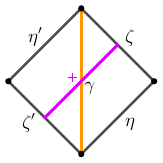}}
\,\,
\scalebox{0.949}{\includegraphics{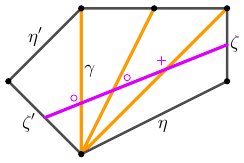}}
\,\,
\scalebox{0.949}{\includegraphics{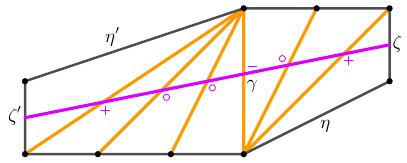}}
\caption{Shear coordinates of $\alpha$ on the orange arcs, \cref{g prop red case 1} in the proof of \cref{g prop}.\ref{g prop red}.}
\label{alpha quad}
\end{figure}
\end{case}

\begin{case}
\label{g prop red case 2}
$\gamma$ shares an endpoint of $\alpha$ where $\alpha$ is tagged notched.
Again, the proof 
of \cref{ideal decomp} (this time Case 2), indicates that every orange arc $\delta$ shares an endpoint with $\gamma$.
Let $p$ be the endpoint common endpoint of $\gamma$ and $\alpha$ (where $\alpha$ is notched) and let $q$ be the other endpoint of $\gamma$ (not of $\alpha$).
There are five subcases.

\begin{subcase}
\label{g prop red subcase 2.1}
$\gamma$ is the only orange arc.
This case is illustrated in \cref{hydigon1}.
\begin{figure}
\begin{tabular}{ccc}
\scalebox{0.949}{\includegraphics{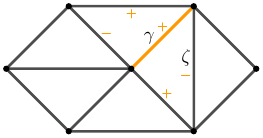}}
&
\scalebox{0.949}{\includegraphics{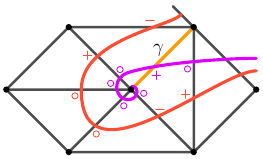}}
&
\scalebox{0.949}{\includegraphics{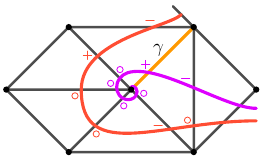}}
\end{tabular}
\caption{Exponents in $x_\gamma\cdot\hy_\orange\big|_{y=1}$ and shear coordinates of $\alpha$ and the red arc, \cref{g prop red subcase 2.1} in the proof of \cref{g prop}.\ref{g prop red}.}
\label{hydigon1}
\end{figure}
The left picture of the figure shows all contributions to the exponent vector of $x_\gamma\cdot\hy_\orange\big|_{y=1}$.
The curve $\kappa(\alpha)$ crosses the curve labeled $\zeta$ after leaving $p$ and there are two possibilities for where $\kappa(\alpha)$ goes after crossing~$\zeta$.
The middle and right picture verify, for these two possibilities, that the shear coordinates of $\kappa(\alpha)$ minus the shear coordinates of $\kappa$ of the red arc have entries that agree with the exponent vector of $x_\gamma\cdot\hy_\orange\big|_{y=1}$.
The configuration shown in the right picture includes the possibility that $\alpha$ coincides with an arc of $T$ except for being notched at one side.
\end{subcase}

\begin{subcase}
\label{g prop red subcase 2.2}
All orange arcs share the endpoint $p$ and there are at least $2$ non-orange arcs at~$p$.
This case is illustrated and verified in \cref{hydigon2}, where the pictures are analogous to those in \cref{hydigon1}.
\begin{figure}
\begin{tabular}{ccc}
\scalebox{0.949}{\includegraphics{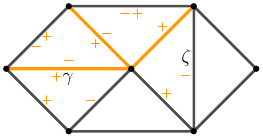}}
&
\scalebox{0.949}{\includegraphics{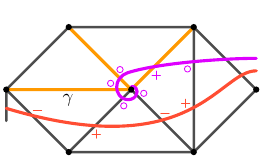}}
&
\scalebox{0.949}{\includegraphics{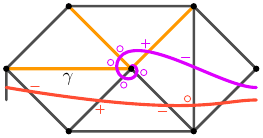}}
\end{tabular}
\caption{Exponents in $x_\gamma\cdot\hy_\orange\big|_{y=1}$ and shear coordinates of $\alpha$ and the red arc, \cref{g prop red subcase 2.2} in the proof of \cref{g prop}.\ref{g prop red}.}
\label{hydigon2}
\end{figure}
In this case, the configuration shown in the middle picture includes the possibility that~$\alpha$ coincides with an arc of $T$ except for being notched at both sides and the right picture again includes the possibility that $\alpha$ coincides with an arc of $T$ except for being notched at one side.
\end{subcase}

\begin{subcase}
\label{g prop red subcase 2.3}
All orange arcs share the endpoint $p$ and there is exactly $1$ non-orange arc at $p$.
This case is illustrated and verified in \cref{hydigon3}, with pictures analogous to previous cases.
\begin{figure}
\begin{tabular}{ccc}
\scalebox{0.949}{\includegraphics{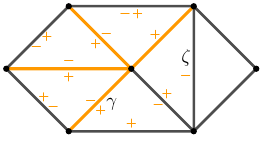}}
&
\scalebox{0.949}{\includegraphics{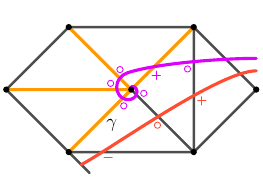}}
&
\scalebox{0.949}{\includegraphics{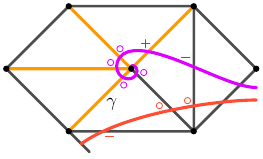}}
\end{tabular}
\caption{Exponents in $x_\gamma\cdot\hy_\orange\big|_{y=1}$ and shear coordinates of $\alpha$ and the red arc, \cref{g prop red subcase 2.3} in the proof of \cref{g prop}.\ref{g prop red}.}
\label{hydigon3}
\end{figure}
Again, the middle picture includes the possibility that $\alpha$ coincides with an arc of $T$ except for being notched at both sides and the right picture includes the possibility that $\alpha$ coincides with an arc of $T$ except for being notched at one side.
\end{subcase}

\begin{subcase}
\label{g prop red subcase 2.4}
All arcs at $p$ are orange and there is exactly one orange arc at $q$ aside from $\gamma$. 
This case is \cref{hydigon4}.
\begin{figure}
\begin{tabular}{ccc}
\scalebox{0.949}{\includegraphics{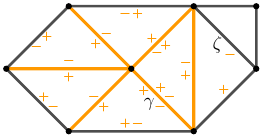}}
&
\scalebox{0.949}{\includegraphics{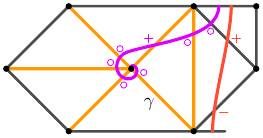}}
&
\scalebox{0.949}{\includegraphics{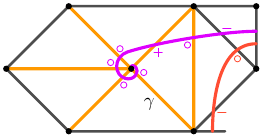}}
\end{tabular}
\caption{Exponents in $x_\gamma\cdot\hy_\orange\big|_{y=1}$ and shear coordinates of $\alpha$ and the red arc, \cref{g prop red subcase 2.4} in the proof of \cref{g prop}.\ref{g prop red}.}
\label{hydigon4}
\end{figure}
The arc $\gamma$ and the other orange arc with an endpoint $q$ bound a triangle whose third side is also orange.
The middle picture includes the possibility that $\alpha$ coincides with an arc of $T$ except for being notched at both sides.
\end{subcase}

\begin{subcase}
\label{g prop red subcase 2.5}
All arcs at $p$ are orange and there is more than one orange arc at $q$ aside from $\gamma$.
This case is \cref{hydigon5}, which is like the previous figures, except that the top picture shows the exponent vector of $x_\gamma\cdot\hy_\orange\big|_{y=1}$ and the bottom two pictures show the shear coordinates of $\kappa(\alpha)$ and of $\kappa$ applied to the red arc. \qedhere
\begin{figure}
\begin{tabular}{c}
\scalebox{0.949}{\includegraphics{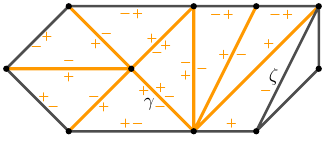}}
\end{tabular}
\begin{tabular}{ccc}
\scalebox{0.949}{\includegraphics{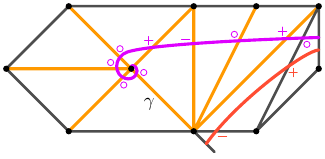}}
&&
\scalebox{0.949}{\includegraphics{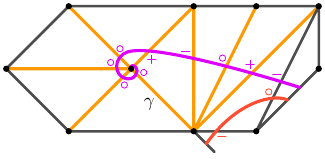}}
\end{tabular}
\caption{Exponents in $x_\gamma\cdot\hy_\orange\big|_{y=1}$ and shear coordinates of $\alpha$ and the red arc, \cref{g prop red subcase 2.5} in the proof of \cref{g prop}.\ref{g prop red}.}
\label{hydigon5}
\end{figure}
\end{subcase}
\end{case}
\end{proof}

\section{Hyperbolic geometry and the coefficient-free case}\label{sec:hyperbolic}
In this section, we prove the analog of \cref{main} in the coefficient-free case.
This is a specialization of \cref{main} by setting all of the tropical variables $y_\gamma$ to $1$.
The coefficient-free case is not used in the proof of Theorems~\ref{main} in \cref{sec:tropical}, but rather serves to illustrate the remaining key idea of the proof using the simpler of the two hyperbolic-geometric settings of~\cite{FominThurston}.
We continue the notation $(\S,\M)$ for a marked surface and $T$ for a fixed tagged triangulation, again assumed to have all arcs tagged plain, except possibly at some punctures $p$ incident to exactly two tagged arcs of $T$, identical except for opposite taggings at $p$.
We also continue the notation $T^\circ$ for the ordinary triangulation corresponding to $T$.

The coefficient-free case of the theorem associates the same poset $P_\alpha$ to a tagged arc $\alpha$, but with a different weighting.
For $e \in P_\alpha$, we define the weight $\tilde w(e)$ to be the monomial obtained from $w(e)$ by setting all tropical variables $y_\gamma$ to $1$.
Thus when $w(e)$ is $\hy_\gamma$ for some $\gamma\in T$, then~$\tilde w(e)=\prod_{\delta\in T}x_\delta^{b_{\delta\gamma}}$.
When $w(e)=\frac{\hy_\beta}{\hy_\gamma}$, then $\tilde w(e)=1$ because in this case $b_{\delta\beta}=b_{\delta\gamma}$ for all~$\delta\in T$.

\begin{theorem}\label{main lite}
The coefficient-free cluster variable associated to a tagged arc $\alpha$ is
\begin{equation}
\label{main lite eq}
x_\alpha=\g_\alpha\cdot\sum_I \prod_{e \in I}\tilde w(e),
\end{equation}
where the sum is over all order ideals $I$ in $P_\alpha$.
\end{theorem}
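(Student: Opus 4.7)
The plan is to prove \cref{main lite} by induction on $|P_\alpha|$, using Fomin--Thurston's realization of coefficient-free cluster variables as lambda lengths of decorated hyperbolic arcs from \cite[Sections~7--8]{FominThurston}, with the tidy exchange relations of \cref{ideal decomp,g prop} as the inductive step. By \cref{reduce}, it suffices to treat $\alpha$ tagged plain at every endpoint that is the interior vertex of a self-folded triangle in $T^\circ$. The base case $|P_\alpha|=0$ handles $\alpha$ that coincides up to tagging with an arc of $T$; direct inspection of the degenerate cases of \cref{subsec:definition_P_alpha_and_w} verifies \eqref{main lite eq}, each instance amounting to either the identity $x_\alpha = x_\alpha$ or a single Ptolemy identity in a triangle or digon.

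For the inductive step, suppose first that $(T,\alpha)$ is tidy and pick $\gamma \in T$ exchangeable with $\alpha$ (either $\gamma$ crosses $\alpha$ or $\gamma$ shares with $\alpha$ an endpoint where $\alpha$ is tagged notched). The coefficient-free exchange relation (\cref{F or x} specialized to $y=1$) reads $x_\gamma \cdot x_\alpha = x_\blue + x_\red$, a Ptolemy-type identity among lambda lengths. By \cref{ideal decomp}, the blue and red arcs have strictly smaller associated posets, so the inductive hypothesis gives $x_\blue = \g_\blue \cdot F(\Pb)|_{y=1}$ and $x_\red = \g_\red \cdot F(\Pr)|_{y=1}$. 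Substituting, dividing by $x_\gamma$, and using $\g_\blue = x_\gamma \cdot \g_\alpha$ and $\g_\red = x_\gamma \cdot \g_\alpha \cdot \hy_\orange|_{y=1}$ from \cref{g prop}, we obtain
\[
x_\alpha \;=\; \g_\alpha \cdot \bigl( F(\Pb) + \hy_\orange \cdot F(\Pr) \bigr)\Big|_{y=1} \;=\; \g_\alpha \cdot F(P_\alpha)|_{y=1},
\]
where the last equality is the poset exchange relation of \cref{ideal decomp} specialized to $y=1$. This is exactly \eqref{main lite eq}.

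The main obstacle is that $(T,\alpha)$ is rarely tidy: $T^\circ$ may contain self-folded triangles, triangles of $T$ may have identified edges or vertices, $\alpha$ may cross a single arc of $T$ more than once, or $\alpha$ may share an endpoint with an arc it also crosses. To handle this, I would construct a finite cover $\pi: \tilde\S \to \S'$ of a neighborhood $\S'$ of $\alpha$ together with the triangles of $T$ it meets (and, for notched endpoints of $\alpha$, the triangles at those endpoints), designed so that the lift $\tilde\alpha$ and the lift $\tilde T$ of the relevant part of $T$ form a tidy pair. The combinatorics of this cover is similar to \cite[Section~7]{MusikerSchifflerWilliams} in the plain-tagged case but must be extended to accommodate the chains adjoined at notched endpoints. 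Crucially, the cover can be arranged so that $P_{\tilde\alpha}$, $\g_{\tilde\alpha}$, and the weights $\tilde w$ on $P_{\tilde\alpha}$ match those for $\alpha$ downstairs, and so that some exchangeable $\tilde\gamma \in \tilde T$ projects to an arc of $T$.

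The remaining technical step, which I expect to be the hardest, is to lift the decorated hyperbolic structure from $\S$ to $\tilde\S$ so that the lambda length of every arc in $\S'$ equals the lambda length of its lift. This requires a careful choice of horocycles at punctures and their lifts along $\pi$, especially at notched endpoints where the definitions of $\kappa(\alpha)$ and $\g_\alpha$ implicitly involve a spiral-reversal convention (see \cref{main ex g}) that must be compatible with the cover. Once the metric lift is in place, the tidy inductive step of the second paragraph applies to $(\tilde T, \tilde\alpha)$ in $\tilde\S$, and the resulting identity $x_{\tilde\alpha} = \g_{\tilde\alpha} \cdot F(P_{\tilde\alpha})|_{y=1}$ descends via $\pi$ (through the equality of lambda lengths of $\tilde\alpha$ and $\alpha$ and of each $\tilde\gamma$ and its projection) to \eqref{main lite eq} for $\alpha$ in $\S$.
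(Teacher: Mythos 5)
Your proposal follows essentially the same route as the paper: induction on $|P_\alpha|$, with the tidy exchange relations of \cref{ideal decomp} and \cref{g prop} as the inductive step, made applicable by passing to a locally isometric cover built from tiles (the paper's ``tile cover'') on which the lifted pair is tidy and the lambda lengths, $\g$-vector, and weighted poset all agree with those downstairs. One small correction: the base case $|P_\alpha|=0$ covers only $\alpha\in T$ (or the exterior edge of a self-folded triangle); the degenerate cases where $\alpha$ coincides with an arc of $T$ but is notched at an endpoint have \emph{nonempty} $P_\alpha$ and are absorbed into the inductive step via the cover, which does not affect the soundness of your outline.
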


The entire discussion in \cref{red sec} holds in the coefficient-free case verbatim (but omitting the tropical variables $y_\gamma$ or setting them all to $1$), so in particular we have the following version of \cref{reduce}.

\begin{proposition}\label{reduce lite}
Assume that \cref{main lite eq} holds for every tagged arc $\alpha$ with the following property:
When $\alpha$ has an endpoint that is the interior vertex of a self-folded triangle in $T^\circ$, the tagging of $\alpha$ is plain at that endpoint.
Then \cref{main lite eq} holds for every tagged arc.
\end{proposition}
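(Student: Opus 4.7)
The plan is to mimic the proof of \cref{reduce} given above, simply specializing the entire argument to the coefficient-free case by setting every tropical variable $y_\gamma$ to $1$. The symmetry that drives the earlier proof is purely combinatorial: at a puncture $p$ that is the interior vertex of a self-folded triangle in $T^\circ$ with interior edge $\beta$ and exterior edge $\gamma$, the tag-switching operation at $p$ simply exchanges $\beta$ and $\gamma$, and flips commute with this operation. Hence, starting from an expression for $x_{\alpha'}$ (with $\alpha'$ plain at $p$) in terms of the initial cluster variables at the tag-swapped triangulation $T'$, the corresponding expression for $x_\alpha$ at $T$ is obtained by swapping every $x_\beta$ with $x_\gamma$ (and, in the principal-coefficients case, every $y_\beta$ with $y_\gamma$). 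In the coefficient-free case, the $y$-substitution is trivial.

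First I would verify that $\g_\alpha$ has the required symmetry. The curves $\kappa(\alpha)$ and $\kappa(\alpha')$ differ only by reversing the direction of their spirals at $p$; by the definition of shear coordinates with respect to a tagged triangulation, this reverses the roles of $\beta$ and $\gamma$ in \eqref{g def}, so $\g_\alpha$ is obtained from $\g_{\alpha'}$ by swapping $x_\beta$ and $x_\gamma$. This is exactly the check performed in the proof of \cref{reduce}, and it is insensitive to whether we are in the coefficient-free or principal-coefficients setting.

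Next I would verify that the weighted sum $\sum_I\prod_{e\in I}\tilde w(e)$ transforms in the same way. By inspection of the construction of $P_\alpha$ and $w$ in \cref{subsec:definition_P_alpha_and_w}, the posets $P_\alpha$ and $P_{\alpha'}$ are identical as posets, and their labels are related by swapping every occurrence of $\hy_\beta$ with $\hy_\gamma$ (and every occurrence of $\hy_\beta/\hy_\gamma$ with $\hy_\gamma/\hy_\beta$, which arises only in the weight conventions inside the self-folded triangle; see \cref{simple Palpha ex,self-fold coinciding}). Setting $y_\delta=1$ for all $\delta\in T$ turns $\hy_\delta$ into $\prod_{\epsilon\in T}x_\epsilon^{b_{\epsilon\delta}}$ and collapses every label of the form $\hy_\beta/\hy_\gamma$ to $1$ (since $b_{\epsilon\beta}=b_{\epsilon\gamma}$ for all $\epsilon\in T$ when $\beta,\gamma$ bound a self-folded triangle). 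Under this specialization, the swap $\hy_\beta\leftrightarrow\hy_\gamma$ on labels is precisely the swap $x_\beta\leftrightarrow x_\gamma$ on weights, while the exceptional quotient labels contribute trivially and pose no obstruction.

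Combining these two observations, the right side of \eqref{main lite eq} for $\alpha$ is obtained from the right side of \eqref{main lite eq} for $\alpha'$ by the same substitution $x_\beta\leftrightarrow x_\gamma$ that relates $x_\alpha$ to $x_{\alpha'}$, iterating once for each self-folded-triangle puncture at which $\alpha$ is notched. Since \eqref{main lite eq} is assumed to hold for $\alpha'$ by hypothesis, it holds for $\alpha$. I do not foresee any genuine obstacle: the argument is an almost verbatim copy of the proof of \cref{reduce}, and the only point that warrants care is noticing that the $\hy_\beta/\hy_\gamma$ weights survive the specialization as $1$ and are thus automatically compatible with the symmetry.
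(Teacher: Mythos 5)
Your proposal is correct and matches the paper's own proof, which simply observes that the entire argument of \cref{red sec} carries over verbatim to the coefficient-free case once the tropical variables are set to $1$. Your additional check that the quotient labels $\hy_\beta/\hy_\gamma$ specialize to $1$ and hence cause no trouble is a worthwhile detail, but the route is the same.
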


\subsection{Lambda lengths}\label{lam sec}
In preparation for the proof of \cref{main lite}, we explain how cluster variables can be realized in terms of certain hyperbolic lengths.
(Proofs of statements left unjustified here are found in \cite[Chapters~7--8]{FominThurston}.)
More precisely, the initial cluster variables can be taken as indeterminates (constrained to be positive) whose values determine certain hyperbolic lengths in the surface and thus determine a hyperbolic metric and a choice of certain curves called horocycles.
Although it is the initial cluster variables that determine the metric, it is more pleasant to explain the construction in the reverse order, starting from a metric, choosing horocycles, and measuring certain lengths that determine the initial cluster variables.

Given $(\S,\M)$ and $T$, choose a finite-area hyperbolic metric with constant curvature $-1$ on $\S\setminus\M$ with a cusp at each point in $\M$, with the property that each boundary segment is a geodesic.
Since the arcs in $T^\circ$ are defined up to isotopy, we can also assume that they are geodesics.
At each marked point $p$, choose a \newword{horocycle} $h_p$, meaning a curve that is orthogonal to every geodesic that limits to~$p$.
This hyperbolic metric together with the choice of horocycles is a \newword{decorated hyperbolic structure} on $(\S,\M)$.
Given a geodesic $\gamma$ that limits to marked points $p$ and $q$, the distance along the geodesic between $h_p$ and $h_q$ is finite (in contrast to the distance along the geodesic between $p$ and $q$).
The \newword{signed length} $l(\gamma)$ is this finite distance, counted as positive if the segment of~$\gamma$ between $h_p$ and $h_q$ is outside the disks they define, zero if $h_p$, $h_q$, and $\gamma$ all intersect at the same point, and otherwise negative if the segment of~$\gamma$ between $h_p$ and $h_q$ is inside the disks they define.
The \newword{lambda-length} of $\gamma$ is defined to be $\lambda(\gamma)=\exp(l(\gamma)/2)$.

The first key result about lambda lengths \cite[Theorem~7.4]{FominThurston} is that an arbitrary assignment of a positive number to each arc in $T^\circ$ and to each boundary segment of $(\S,\M)$ uniquely determines a decorated hyperbolic structure such that the arcs and boundary segments are geodesics and the assigned numbers are the lambda lengths.
(In the language of \cite[Theorem~7.4]{FominThurston}, there is a bijection from the set of such assignments to the \newword{decorated Teichm\"{u}ller space}, the space of decorated hyperbolic structures.
Indeed, \cite[Theorem~7.4]{FominThurston} says that the map is a homeomorphism.)

When $p$ is a puncture, the horocycle $h_p$ has a finite positive length $L(h_p)$ in the hyperbolic metric, and $L(h_p)$ specifies $h_p$ uniquely among horocycles at $p$.
The \newword{conjugate horocycle} $\bar h_p$ is the unique horocycle at $p$ whose length satisfies $L(\bar h_p)L(h_p)=1$.
The \newword{lambda length} of a \emph{tagged} arc $\gamma$ is defined just as in the untagged case, except that at each endpoint $p$ where $\gamma$ is tagged notched, we use the conjugate horocycle $\bar h_p$ in place of $h_p$.

The lambda lengths of the tagged arcs in $T$ determine the lambda lengths of the ordinary arcs in $T^\circ$ and vice versa.
Specifically, suppose $\beta$ and $\gamma$ are tagged arcs in $T$ that coincide except that their taggings are both plain at one endpoint $q$ but disagree at the other endpoint $p$, and suppose that $\delta$ is the ordinary arc in $T^\circ$ with both endpoints at $q$ and follows $\beta$ and $\gamma$ closely to form a loop around $p$.
Then 
\begin{equation}\label{loop eq lite}
\lambda(\delta)=\lambda(\beta)\lambda(\gamma).
\end{equation}
(This is \cite[Lemma~8.2]{FominThurston}.)
Thus in particular, we can arbitrarily choose positive lambda lengths~$x_\gamma$ for the tagged arcs in $\gamma\in T$ and for the boundary segments to uniquely determines a hyperbolic metric with constant curvature $-1$ and a choice of horocycles.
The chosen lambda lengths can be thought of as indeterminates which then determine the lambda lengths of all other tagged arcs.
Taking these chosen lambda lengths to be the initial cluster variables for an exchange pattern with initial exchange matrix $B(T)$, the cluster variables in the exchange pattern are indexed by tagged arcs in $(\S,\M)$.
Each tagged arc has a geodesic representative whose lambda length is the associated cluster variable.
This is part of \cite[Theorem~8.6]{FominThurston}.
That theorem does not take coefficient-free cluster algebras but rather boundary coefficients, but we can recover the coefficient-free case by setting the lambda length of each boundary component to be~$1$.

We can now interpret \cref{main lite} as the assertion that a given tagged arc $\alpha$ in $(\S,\M)$ has lambda length $\g_\alpha\cdot\sum_I \prod_{e\in I}\tilde w(e)$, where as always the sum is over all order ideals $I$ in $P_\alpha$.

\subsection{The tile cover of a tagged arc}\label{tile cov sec}
The key to \cref{main lite} is to construct a tagged arc $\alpha'$ in a new surface $(\S',\M')$, equipped with a triangulation $T'$ and a decorated hyperbolic structure such that the lambda length $\lambda(\alpha')$ is the same as $\lambda(\alpha)$, but with simpler combinatorics (that is, so that $(T',\alpha')$ is tidy in the sense of \cref{subsec:tidy}).
This will allow us to use \cref{ideal decomp} in an inductive proof of the theorem.

A given triangle $t$ of $T^\circ$ may not have three distinct edges and three distinct vertices.
Define a \newword{tile} of $t$ to be an ideal triangle with three distinct edges and three distinct vertices and with a hyperbolic metric such that the interior of the tile is isometric to the interior of $t$.
The metric on the interior of the tile uniquely determines a metric on the edges, and since we have assumed that the arcs in $T^\circ$ are geodesic, the edges of the tile are also geodesic.
The tile again has cusps at its vertices.
The tile is naturally equipped with horocycles at each of its vertices, namely the images of the horocycles on vertices of $t$.
If two triangles $t$ and $u$ of $T^\circ$ share an edge, then we can identify a tile of $t$ and a tile of $u$ along that edge in such a way that their decorated hyperbolic structures agree on the edge and define a decorated hyperbolic structure on their union.

Given a tagged arc $\alpha$, the \newword{tile cover} of $\alpha$ is a tagged arc $\alpha'$ in a surface $(\S',\M')$ with a decorated hyperbolic structure and a triangulation~$T'$, constructed as follows and illustrated in \cref{tile ex} and Figures~\ref{tilex1}--\ref{tilex7}.
(Strictly speaking, we should say ``a'' tile cover, since there are choices to be made at the end of the construction.  
We say ``the'' tile cover because the relevant parts of the tile cover are completely determined by $\alpha$ and $T$.)
The crucial property of the tile cover is that~$(T', \alpha')$ is tidy in the sense of \cref{sec:special}.
We only construct the tile cover when $\alpha$ has the following property:  
If $\alpha$ has an endpoint $p$ that is the interior vertex of a self-folded triangle in $T^\circ$, then $\alpha$ is tagged plain there.
(We will only need this case because of \cref{reduce lite} in the coefficient-free case and \cref{reduce} in the principal-coefficients case.)

First, assume $\alpha$ does not coincide, except for tagging, with an arc of $T^\circ$.
Then $\alpha$ can be decomposed into a finite collection of segments, each of which has its endpoints on arcs of $T^\circ$ (possibly at marked points) and otherwise not intersecting the arcs of $T^\circ$.
Thus each segment is contained in some triangle of $T^\circ$.
For each of these segments construct a tile from the triangle containing it.
Since $\alpha$ can intersect the same triangle multiple times, we might construct multiple tiles for a given triangle.
Furthermore, at each endpoint of $\alpha$ that is not the interior point of a self-folded triangle in $T^\circ$, construct a tile for each triangle-corner of a triangle at that endpoint.  
A triangle of $T^\circ$ can have $0$, $1$, $2$, or $3$ corners at that endpoint, so again some triangles may become multiple tiles.
At each endpoint of~$\alpha$ that is the interior point of a self-folded triangle in~$T^\circ$, construct two copies of the tile for the self-folded triangle.
Then identify all of these triangles in the natural way:
For each endpoint of~$\alpha$, identify the tiles on their edges around the endpoint in the order that the triangle corners appear around that endpoint.
(When an endpoint $p$ of $\alpha$ is the interior point of a self-folded triangle in $T^\circ$, arrange the two tiles for that triangle about $p$ to form a digon whose two edges correspond to the exterior edge of the self-folded triangle.
If an endpoint of $\alpha$ is the puncture in a once-punctured digon formed by two triangles in $T^\circ$, identify the tiles for the two triangles so that the same is true of the corresponding endpoint of $\alpha$ in $T'$.)
Then continue identifying edges to join the segments of $\alpha$ in order and lift $\alpha$ to a tagged arc $\alpha'$ (the union of the lifts of the segments) in the union of the tiles.
The union of the tiles is topologically a disk.

If $\alpha$ coincides, except for tagging, with an arc of $T^\circ$, then instead of making tiles for segments of $\alpha$, make two tiles, for the triangles on both sides of $\alpha$.
If $\alpha$ coincides with the interior edge of a self-folded triangle, then the two tiles are identical.
Again at each endpoint where $\alpha$, construct a tile for each triangle corner (without making additional copies of the tiles for the triangles on both sides of $\alpha$).
The edges of these tiles are then identified in the natural way around each endpoint of~$\alpha$.
If $\alpha$ coincides with the interior edge of a self-folded triangle in $T^\circ$, then the tiles from the two sides of $\alpha$ are glued together to form a digon.
If $\alpha$ coincides with an interior edge of a punctured digon in $T^\circ$, then $\alpha'$ also coicides with an interior edge of a punctured digon in $T'$.
The union of the tiles is again a disk.

In either case, embed the disk into some larger marked surface $(\S',\M')$ with a tagged triangulation $T'$ containing the tiles as triangles, with edges of tiles constituting boundary segments of~$\S'$ if and only if they come from boundary segments of $\S$.
This can be done in such a way that $T$ consists of plain-tagged arcs and each triangle of $T$ has three distinct vertices and three distinct edges (as is already true of the tiles).
Crucially, $(T',\alpha')$ is tidy.

\begin{example}\label{tile ex}
\begin{figure}
\includegraphics{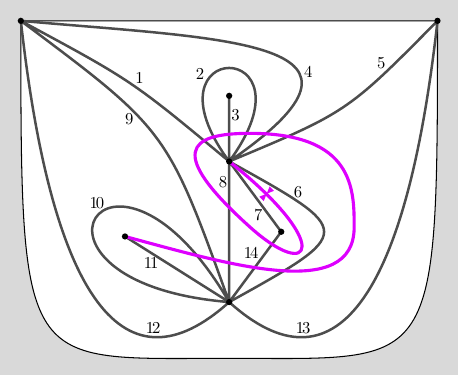}\qquad
\includegraphics{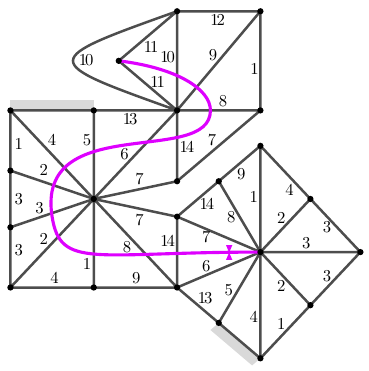}
\caption{The tile cover of a tagged arc.}
\label{tilex1}
\end{figure}
\begin{figure}
\includegraphics{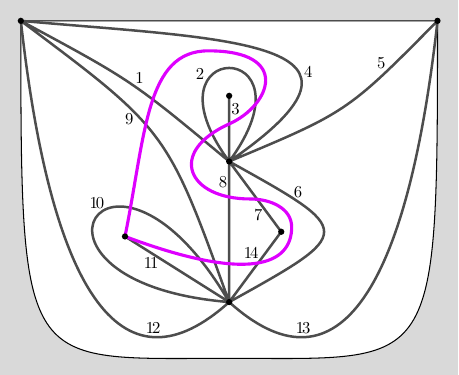}\qquad
\includegraphics{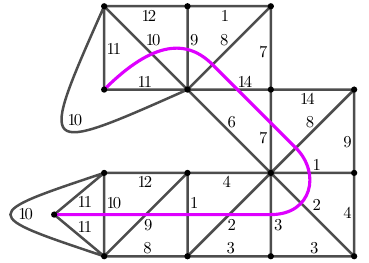}
\caption{The tile cover of a tagged arc.}
\label{tilex8}
\end{figure}
\begin{figure}
\includegraphics{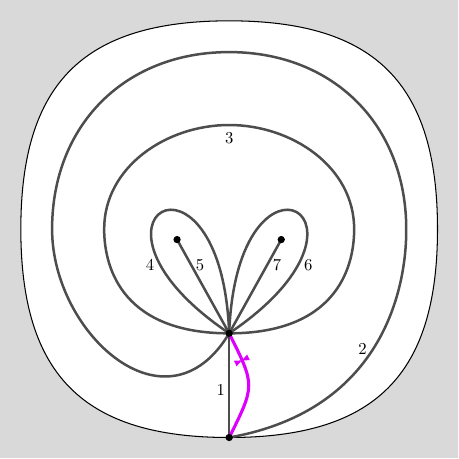}\qquad
\includegraphics{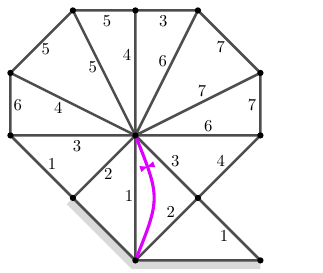}
\caption{The tile cover of a tagged arc.}
\label{tilex3}
\end{figure}
\begin{figure}
\includegraphics{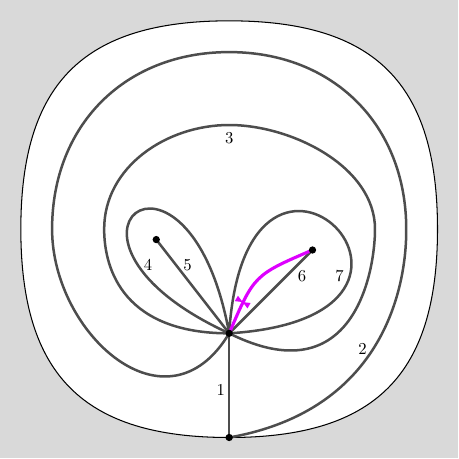}\qquad
\includegraphics{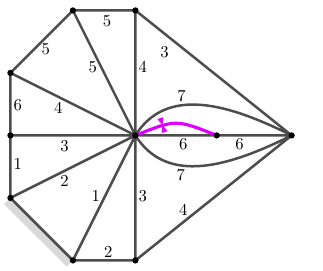}
\caption{The tile cover of a tagged arc.}
\label{tilex4}
\end{figure}
\begin{figure}
\includegraphics{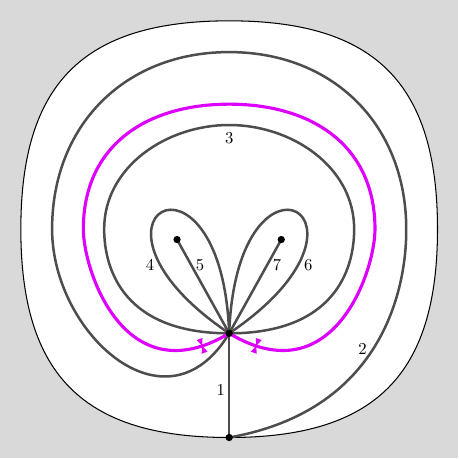}\qquad
\includegraphics{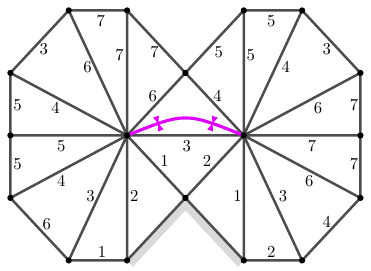}
\caption{The tile cover of a tagged arc.}
\label{tilex5}
\end{figure}
\begin{figure}
\includegraphics{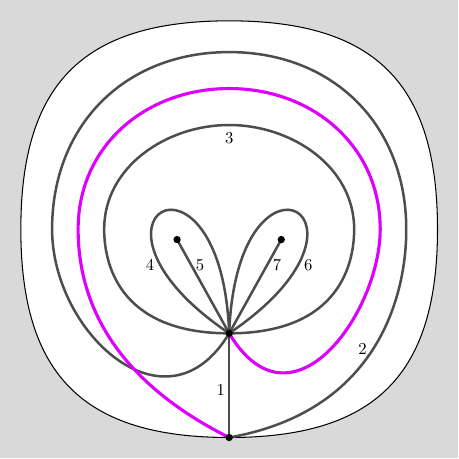}\qquad
\includegraphics{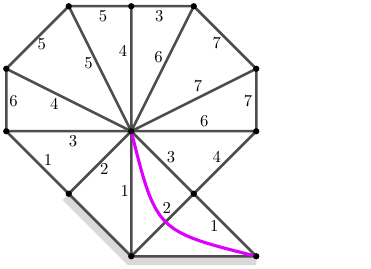}
\caption{The tile cover of a tagged arc.}
\label{tilex6}
\end{figure}
\begin{figure}
\includegraphics{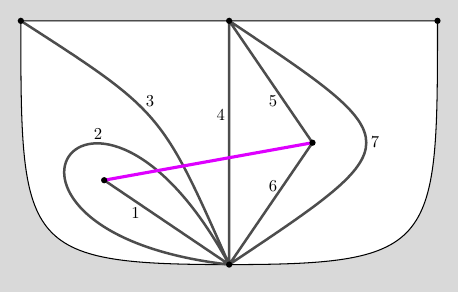}\qquad
\includegraphics{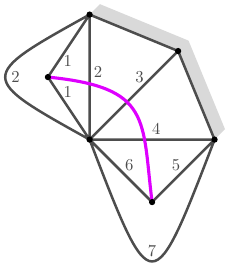}
\caption{The tile cover of a tagged arc.}
\label{tilex7}
\end{figure}
We illustrate the combinatorics of the tile cover in Figures~\ref{tilex1}--\ref{tilex7}.
In each figure, the left picture shows $\alpha$ as a tagged arc in $(\S,\M)$.
The marked surface $(\S,\M)$ and the tagged triangulation $T$ are the same in Figures~\ref{tilex1}--\ref{tilex8} and in Figures~\ref{tilex3}--\ref{tilex6}.
The ordinary triangulation~$T^\circ$ is pictured, and the labeling of arcs of $T^\circ$ implies a labeling of the tagged arcs of $T$ as usual.
(In \cref{tilex1} for example, $2$ labels the tagged arc that agrees with $3$ but with a notch at highest puncture shown, and similarly $10$ labels the tagged arc that agrees with $11$ but with a notch.)
In each figure, the right picture shows the tile cover $\alpha'$ in $(\S',\M')$.
The picture shows only the part of $(\S',\M')$ that is constructed explicitly.
Gray shading indicates certain triangle edges that are boundary segments in $(\S',\M')$.
All other edges shown are arcs, not boundary segments.
\cref{tilex3} continues the example of \cref{in T one fig}.
\cref{tilex4} continues the example of \cref{self-fold coinciding fig} and illustrates the reduction in \cref{red sec}:  In \cref{tilex4}, the arc $\alpha$ is no longer tagged notched at the right puncture, and correspondingly, the roles of arcs $6$ and $7$ have been exchanged.
\cref{tilex5} continues the example of \cref{in T two same fig}.
\end{example}

\begin{remark}\label{more complicated}
The definition of the tile cover uses more tiles than are required for the results of this section:
We could have left out the triangles surrounding endpoints of $\alpha$ that are tagged plain.
The extra triangles around punctures will be needed in \cref{sec:tropical} when we prove the full version of \cref{main}.
\end{remark}

The tiles that form the tile cover of $\alpha$ come equipped with a decorated hyperbolic structure:
By construction, each tile gets this structure from a triangle of $T^\circ$, and the structure agrees on shared edges of tiles.
The chosen horocycles at marked points in $\M$ lift to tiles along with the hyperbolic metric, and horocycles in adjacent tiles join to make horocycles in $(\S',\M')$.
In order to apply the results of \cref{sec:special} to the tile cover of $\alpha$, we need to correctly determine the lambda lengths of the arcs of $T'$ in this decorated hyperbolic structure to $(\S',\M')$.
(In fact, it is only necessary to determine the lambda lengths of the edges of tiles, because these determine the hyperbolic metric on the union of the tiles.)
This is essentially straightforward, but with one small wrinkle coming from self-folded triangles.

As before, let $x_\beta$ stand for the lambda length of a tagged arc $\beta\in T$, so that the initial cluster is $\set{x_\beta}{\beta\in T}$.
We emphasize that $\beta$ is a \emph{tagged} arc in $T$, not an ordinary arc in $T^\circ$.
For most arcs $\beta$, the distinction is meaningless, but if $\beta$ is tagged notched at one endpoint (and thus by our assumptions on $T$ coincides with another arc $\gamma$ of $T$ that is tagged plain), then $\beta$ is \emph{not} the ordinary arc (loop) that forms the outer edge of a self-folded triangle of $T^\circ$.
However, it is that loop that lifts to become an arc in the triangulation $T'$ of $(\S',\M')$.
In this situation, Equation~\eqref{loop eq lite} says that the loop has lambda length $x_\beta x_\gamma$.
Thus the lambda lengths of tile edges are as follows:
If the edge comes from a boundary segment of $(\S,\M)$, then its lambda length is $1$.
If the edge comes from an arc $\beta$ that is \emph{not} the outer edge of a self-folded triangle of $T^\circ$, then its lambda length is $x_\beta$.
If the edge comes an arc $\delta$ that \emph{is} the outer edge of a self-folded triangle of $T^\circ$, then its lambda length is $x_\beta x_\gamma$, where $\beta$ and $\gamma$ are the two tagged arcs of $T$ corresponding to that self-folded triangle.

\begin{example}\label{tile ex 2}
Continuing \cref{tile ex}, we describe the lambda lengths of the arcs and boundary segments shown in tile cover (the right picture) in \cref{tilex7}.
We use the numbering of arcs of $T^\circ$ shown in the left picture.
The boundary segments (indicated by gray shading) each have lambda length $1$.
Arcs labeled $2$ have lambda length $x_1x_2$.
Any other arc labeled $i$ has lambda length $x_i$.
\end{example}

\pagebreak

\subsection{Proof of the coefficient-free version of the main theorem}\label{proof lite sec}
The last ingredients in the proof of \cref{main lite} are three lemmas that relate a tagged arc~$\alpha$ to its tile cover $\alpha'$. 
All three lemmas have the hypothesis that wherever $\alpha$ has an endpoint that is the interior vertex of a self-folded triangle in $T^\circ$, the tagging of $\alpha$ is plain at that endpoint.
In each lemma, $\alpha'$ is the tile cover of $\alpha$ in $(\S',\M')$.

\begin{lemma}\label{lambda lift}
The lambda length $\lambda(\alpha)$ in $(\S,\M)$ equals the lambda length $\lambda(\alpha')$ in $(\S',\M')$.
\end{lemma}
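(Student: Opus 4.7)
The plan is to observe that the decorated hyperbolic structure on the tile cover, inherited from $(\S,\M)$ tile-by-tile, agrees with the decorated hyperbolic structure on $(\S',\M')$ restricted to the tile cover, and then to check that $\alpha'$ maps onto $\alpha$ together with the correct horocycles at both endpoints.

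For the agreement of hyperbolic structures: each tile is by construction isometric in its interior to the corresponding triangle of $T^\circ$, so its edges have lambda lengths equal to those of the corresponding arcs of $T^\circ$ (or boundary segments of $(\S,\M)$). These match the lambda lengths assigned to the same edges viewed as edges of triangles in $T'$, as recorded at the end of \cref{tile cov sec}; in particular the loops bounding self-folded triangles of $T^\circ$ get lambda length $x_\beta x_\gamma$ on both sides, via \eqref{loop eq lite}. Since a decorated ideal triangle is determined up to isometry by its three edge lambda lengths (\cite[Theorem~7.4]{FominThurston}), the two metrics on each tile coincide, and gluing along shared edges shows that the tile cover inherits the same decorated hyperbolic structure from $(\S,\M)$ (via lifting) and from $(\S',\M')$ (via restriction). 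The resulting piecewise-isometric projection $\phi$ from the tile cover onto a neighborhood of $\alpha$ in $(\S,\M)$ satisfies $\phi(\alpha')=\alpha$ by the very definition of $\alpha'$ in \cref{tile cov sec}.

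It remains to verify that the horocycles used to compute $\lambda(\alpha')$ in $(\S',\M')$ are the lifts under $\phi$ of those used to compute $\lambda(\alpha)$ in $(\S,\M)$. At an endpoint $p$ of $\alpha$ that is tagged plain, the horocycle $h_p$ restricts to the tile near the relevant endpoint of $\alpha'$, where it agrees with $h_{p'}$, and this local agreement is all that is needed for the signed-length computation. At an endpoint $p$ where $\alpha$ is tagged notched (so by hypothesis $p$ is not inside a self-folded triangle), the tile cover places all tiles incident to $p$ in a closed disk around the corresponding puncture $p'$, so the lift of $h_p$ forms a complete closed horocycle at $p'$ with $L(h_{p'})=L(h_p)$; consequently $L(\bar h_{p'})=1/L(h_{p'})=L(\bar h_p)$, so $\bar h_{p'}$ lifts $\bar h_p$.

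Combining these observations, the signed length $l(\alpha')$ between the appropriate horocycles at endpoints of $\alpha'$ equals $l(\alpha)$, and hence $\lambda(\alpha')=\lambda(\alpha)$. The main technical care is needed at a notched endpoint, where the fullness of the disk of tiles around $p'$ is essential to give the conjugate horocycle the correct length; this is exactly the reason \cref{tile cov sec} places all tiles incident to endpoints of $\alpha$ (cf.\ \cref{more complicated}). The degenerate cases, where $\alpha$ coincides with an arc of $T$ up to tagging, are handled by the same argument applied to the tile cover built from the two tiles on either side of $\alpha$.
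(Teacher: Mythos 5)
Your proposal is correct and follows essentially the same route as the paper's proof: lift the metric and horocycles tile by tile, observe that the full disk of tiles around a notched endpoint makes the lifted horocycle a complete closed horocycle of the same length (so conjugate horocycles correspond), and conclude that the signed lengths, hence lambda lengths, agree. The paper states the key point more briefly — that the neighborhood of each puncture maps isometrically to its counterpart in the tile cover except at interior vertices of self-folded triangles, where the hypothesis forces plain tagging — but the substance is identical to your argument.
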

\begin{proof}
We have decomposed~$\alpha$ into a finite collection of segments according to the triangulation~$T^\circ$, constructed a tile in~$T'$ for each segment with the same local hyperbolic metric, and transported the horocycles in the corresponding corners of the tiles.
Crucially, whenever $\alpha$ is tagged notched at one of its endpoints $p$, the length of the horocycle $h_p$ is the same as the length of the horocycle at the corresponding point in the tile cover.
(The neighborhood of every puncture $p$ maps isometrically to the neighborhood of the corresponding puncture in the tile cover, \emph{except} when $p$ is the interior vertex of a self-folded triangle of $T^\circ$.
However, by hypothesis, $\alpha$ must be tagged plain at any such puncture.)
Therefore, in any case where the lambda length of $\alpha$ involves a conjugate horocycle, the lift of that conjugate horocycle to the tile cover is precisely the conjugate horocycle in $(\S',\M')$.
We see that the length~$l(\alpha)$ of the segment of~$\alpha$ in~$(\S,\M)$ between the horocycles corresponding to its endpoints equals the length~$l(\alpha')$ of the segment of~$\alpha'$ in~$(\S',\M')$ between the horocycles corresponding to its endpoints.
Thus $\lambda(\alpha')=\exp(l(\alpha')/2)=\exp(l(\alpha)/2)=\lambda(\alpha)$.
\end{proof}

\begin{lemma}\label{g lift}
The $\g$-vector $\g_\alpha$ in $(\S,\M)$ equals the $\g$-vector $\g_{\alpha'}$ in $(\S',\M')$.
\end{lemma}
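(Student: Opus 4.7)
The plan is to prove $\g_\alpha = \g_{\alpha'}$ by comparing both sides directly as Laurent monomials in the initial cluster variables $\{x_\beta : \beta \in T\}$. Using \cref{g def}, $\g_\alpha = \prod_{\gamma \in T} x_\gamma^{-b_\gamma(T,\kappa(\alpha))}$ and $\g_{\alpha'} = \prod_{\gamma' \in T'} x_{\gamma'}^{-b_{\gamma'}(T',\kappa(\alpha'))}$. The lambda length assignment from \cref{tile cov sec} identifies each $x_{\gamma'}$ with either $x_\beta$ (when $\gamma'$ is a lift of a tagged arc $\beta \in T$ that is not an outer edge of a self-folded triangle) or $x_\beta x_{\beta'}$ (when $\gamma'$ is a lift of the outer edge of a self-folded triangle, with $\beta, \beta' \in T$ the corresponding pair differing only by tagging at the interior puncture). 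After this substitution, both sides are Laurent monomials in $\{x_\beta : \beta \in T\}$, so it suffices to verify that the exponent of each $x_\beta$ agrees.

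The key idea is that shear coordinates are local. Specifically, $b_\gamma(T,\kappa(\alpha))$ is a sum of signed contributions, one per ``zig-zag'' piece of $\kappa(\alpha)$ through the quadrilateral formed by the two triangles of $T^\circ$ adjacent to $\gamma$, subject to the spiral-reversal convention at punctures inside self-folded triangles. The tile cover construction attaches a tile for each triangle of $T^\circ$ that $\alpha$ enters and for each triangle corner at an endpoint of $\alpha$, and these tiles are assembled around crossings and endpoints exactly as in $(\S,\M)$. Consequently, each zig-zag of $\kappa(\alpha)$ through the quadrilateral around an arc of $T^\circ$ will lift to a zig-zag of $\kappa(\alpha')$ through the corresponding quadrilateral in $T'$, with matching sign. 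The hypothesis that $\alpha$ is tagged plain at any endpoint inside a self-folded triangle ensures that no spiral-reversal is needed for the endpoints of $\kappa(\alpha)$, so $\kappa(\alpha')$ spirals in the natural way at each endpoint of $\alpha'$.

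I would then group the shear contributions in $T'$ by the arc of $T^\circ$ they cover. The generic case is straightforward: for $\beta \in T$ not associated to a puncture inside a self-folded triangle, every lift of $\beta$ in $T'$ has lambda length $x_\beta$, and summing the shear contributions over these lifts recovers $b_\beta(T,\kappa(\alpha))$. The hard part is when $\beta, \beta' \in T$ coincide as ordinary arcs and differ only in tagging at a puncture $p$ inside a self-folded triangle of $T^\circ$ with outer edge $\delta$. Here $\delta$ is not in $T$, but each of its lifts in $T'$ has lambda length $x_\beta x_{\beta'}$, so it contributes equally to the exponents of $x_\beta$ and $x_{\beta'}$. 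For a zig-zag of $\kappa(\alpha)$ at $\beta$ coming from elsewhere on the surface, the spiral-reversal convention swaps the roles of $\beta$ and $\delta$ in the shear coordinate at $\beta$, and this swap is mirrored in the tile cover by the digon structure formed by the two lifted tiles of the self-folded triangle, in which the two interior-edge lifts meet at $p$ and the two outer-edge lifts form the digon's boundary.

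The main obstacle is the finite but intricate local case analysis needed to verify that, at each configuration (zig-zag through a non-self-folded arc, zig-zag through the interior edge of a self-folded triangle, spiral at an endpoint, and the various ways these interact with self-folded triangles), the spiral-reversal bookkeeping in $(\S,\M)$ is exactly matched by the tile-cover side, where contributions from the two lifts of $\delta$ (combined via $x_{\gamma'} = x_\beta x_{\beta'}$) and the separate lifts of $\beta$ itself add up correctly. Once this local matching is verified, summing over all crossings and endpoints yields $\g_\alpha = \g_{\alpha'}$. The degenerate cases where $\alpha$ coincides with an arc of $T$ up to tagging are handled by the same local comparison, since by hypothesis the tagging at any self-folded-triangle-interior endpoint is plain, avoiding the most delicate configurations.
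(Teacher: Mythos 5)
Your overall strategy coincides with the paper's: expand both sides via \cref{g def} as products over crossings, match each crossing of an arc $\gamma$ of $T^\circ$ by $\kappa(\alpha)$ with the single crossing of the corresponding lift $\gamma'$ by $\kappa(\alpha')$, and use the tile-edge lambda lengths ($x_\gamma$ for an ordinary arc, $x_\beta x_{\beta'}$ for a lift of the exterior edge of a self-folded triangle) to compare exponents. For crossings away from self-folded triangles, and for passes of $\kappa(\alpha)$ through a self-folded triangle (where the shear entries at the interior and exterior edges receive equal contributions, matching the single factor $x_\beta x_{\beta'}$ upstairs), your matching is correct and is exactly what the paper does.

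The gap is at the one case you explicitly defer as ``the main obstacle,'' and the premise you offer for it is false. You claim that because $\alpha$ is tagged plain at an endpoint $p$ that is the interior vertex of a self-folded triangle, ``no spiral-reversal is needed for the endpoints of $\kappa(\alpha)$.'' The spiral-reversal in the definition of tagged shear coordinates is attached to the \emph{interior edge} of the self-folded triangle, not to the tagging of $\alpha$: it must be performed even for plain-tagged endpoints (this is precisely how $b_9(T,\alpha)$ is computed in \cref{main ex g}). Writing $\gamma$ for the exterior edge and $\delta$ for the interior edge, the spiral of $\kappa(\alpha)$ into $p$ contributes to exactly one of the $\gamma$-entry and the $\delta$-entry --- which one depends on which side of the triangle $\kappa(\alpha)$ enters from --- yielding a factor of $x_\gamma$ or of $x_\delta^{-1}$ in $\g_\alpha$; the $\delta$-entry contribution exists only because of the reversed spiral. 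On the tile-cover side the two tiles form a digon at the corresponding puncture, the lift of $\gamma$ carries lambda length $x_\gamma x_\delta$, and $\kappa(\alpha')$ picks up an extra shear contribution at one lift of $\delta$ (lambda length $x_\delta$) as it leaves the digon; only the \emph{product} of these two factors reproduces $x_\gamma$ or $x_\delta^{-1}$. This two-subcase verification (the paper's \cref{digonshear}) is the entire nontrivial content of the lemma, and your proposal neither carries it out nor sets it up correctly: under your ``no reversal at endpoints'' premise the $\delta$-entry contribution downstairs would be dropped, and the two sides would not balance.
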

\begin{proof}
The monomial $\g_{\alpha'}$ is the product of the lambda lengths $\lambda(\gamma')$ raised to the exponent $-b_{\gamma'}(T',\kappa(\alpha'))$ over all arcs $\gamma\in T'$.
(Recall that $b_{\gamma'}(T',\kappa(\alpha'))$ is the entry, in position $\gamma$, of the shear coordinate of $\kappa(\alpha')$ with respect to $T'$.)
Since $T'$ has all arcs tagged plain, for any~$\gamma\in T'$, there is quadrilateral in $T'$ with $\gamma'$ as a diagonal.
There is a contribution $\lambda(\gamma')^{\pm1}$ to $\g_{\alpha'}$ every time~$\kappa(\alpha')$ crosses that quadrilateral through opposite sides, but by construction, $\kappa(\alpha')$ makes at most one crossing of every quadrilateral in $T'$.
For every quadrilateral crossed, there is a corresponding pair of triangles in $T$ (in $(\S,\M)$), crossed by $\alpha$, and sharing an edge that is an arc~$\gamma\in T^\circ$.
Possibly $\alpha$ crosses $\gamma$ many times, but this crossing makes the same contribution $\pm1$ as the contribution from $\alpha'$ crossing $\gamma'$.

If~$\gamma$ is not the exterior edge of a self-folded triangle in $T^\circ$, then $\lambda(\gamma)$ is $x_\gamma$ and, by construction~$\lambda(\gamma')$ is also $x_\gamma$.
If $\gamma$ is the exterior edge of a self-folded triangle in $T^\circ$, then let $\delta$ be the interior edge.
There are two possibilities:  
Either this crossing of $\gamma$ by $\kappa(\alpha)$ happens as $\kappa(\alpha)$ passes through the self-folded triangle, or this crossing happens as $\kappa(\alpha)$ spirals into the interior vertex of the triangle.

The case where $\kappa(\alpha)$ passes through the triangle is illustrated in \cref{tilex8shear} (\cref{g ex}).
In this case, by the definition of the shear coordinate in the presence of self-folded triangles, this crossing of $\gamma$ contributes the same $\pm1$ to the shear coordinates at the entries $\gamma$ and $\delta$.
Thus the contribution of this crossing of $\gamma$ is $x_\gamma x_\delta$.
By construction $\lambda(\gamma')$ is also $x_\gamma x_\delta$.
(In this case where~$\kappa(\alpha)$ passes through the self-folded triangle, it is impossible for $\gamma$ to be the interior edge of a self-folded triangle in $T^\circ$, because if so, any crossing of $\gamma$ by $\alpha$ lifts to a crossing of $\gamma'$ by $\alpha$ that contributes nothing to the shear coordinate.)

The case where $\kappa(\alpha)$ spirals into the interior edge appears in \cref{tilex8shear} but also more specifically in \cref{digonshear}.
This crossing either contributes $+1$ to the $\gamma$-entry of the shear coordinates of $\kappa(\alpha)$ and contributes nothing to the $\delta$-entry or contributes nothing to the $\gamma$-entry and contributes $-1$ to the $\delta$-entry, thus contributing either $x_\gamma$ or $x_\delta^{-1}$ to~$\g_\alpha$.
In either case, since $\alpha'$ is tagged plain at this endpoint, there is a contribution $-1$ to the shear coordinate of one of the arcs in $T'$ that is a lift of the interior edge.
(This occurs because $\kappa(\alpha')$ crosses that arc of $T'$ and then hits an arc of $T'$ that does not come from a tile.)
When the crossing contributes $+1$ to the $\gamma$-entry of the shear coordinates of $\kappa(\alpha)$, the crossing by $\kappa(\alpha')$ contributes $+1$ to the shear coordinate given by the lift of $\gamma$, and the net contribution to $\g_{\alpha'}$ is $(x_\gamma x_\delta)\cdot x_\delta^{-1}=x_\gamma$.
When the crossing contributes~$-1$ to the $\delta$-entry of the shear coordinates of $\kappa(\alpha)$, the crossing by $\kappa(\alpha')$ contributes $0$ to the shear coordinate given by the lift of $\gamma$, and the net contribution to $\g_{\alpha'}$ is~$x_\delta^{-1}$.
\end{proof}

\begin{example}\label{g ex}   
\cref{tilex8shear} shows the computations of $\g_\alpha$ and $\g_{\alpha'}$, continuing the example of \cref{tilex8}.
\begin{figure}
\begin{tabular}{cc}
\scalebox{1.05}{\includegraphics{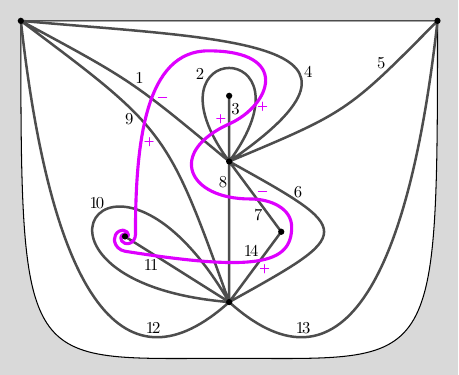}}
&
\begin{tabular}{l}\\[-197pt]
\hspace{-10pt}
\scalebox{0.85}{\includegraphics{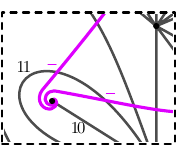}}\\[11pt]
\hspace*{3pt}
\scalebox{0.95}{\includegraphics{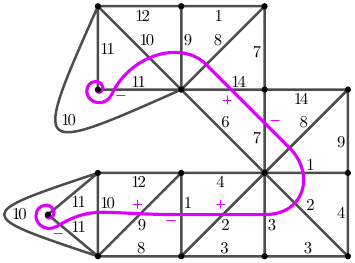}}
\end{tabular}\\
$\g_\alpha=x_{11}x_{14}^{-1}x_7x_2^{-1}x_3^{-1}x_1x_9^{-1}x_{11}$
&
$\g_{\alpha'}=x_{11}x_{14}^{-1}x_7(x_2x_3)^{-1}x_1x_9^{-1}x_{11}$
\end{tabular}
\caption{Computing $\g_\alpha$ and $\g_{\alpha'}$.}
\label{tilex8shear}
\end{figure}
Both endpoints of $\alpha$ are at the same puncture, the interior vertex of a self-folded triangle.
Following $\kappa(\alpha)$ to the right from this point and keeping in mind the minus sign in the exponent in \cref{g def}, the contributions to $\g_\alpha$ are $x_{11}x_{14}^{-1}x_7x_2^{-1}x_3^{-1}x_1x_9^{-1}x_{11}$.
If we follow~$\alpha'$ in the same direction (from the top of the picture on the right), the contributions to $\g_{\alpha'}$ are~$x_{11}x_{14}^{-1}x_7(x_2x_3)^{-1}x_1x_9^{-1}x_{11}$.
\cref{digonshear} shows two much simpler examples that illustrate the two possibilities for shear coordinates of $\kappa(\alpha)$ and $\kappa(\alpha')$ when~$\alpha$ has an endpoint at the interior vertex of a self-folded triangle.
\begin{figure}
\begin{tabular}{ccc}
\includegraphics{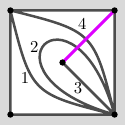}
&
\hspace*{10pt}
\includegraphics{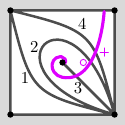}
\includegraphics{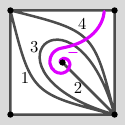}
\hspace*{10pt}
&
\includegraphics{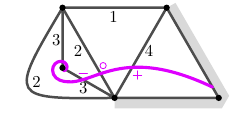}\\
$\alpha$ & $\g_\alpha=x_3x_4^{-1}$&$\g_{\alpha'}=x_3x_4^{-1}$\\[10pt]
\hline\\[10pt]
\includegraphics{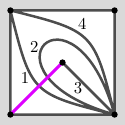}
&
\hspace*{10pt}
\includegraphics{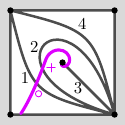}
\includegraphics{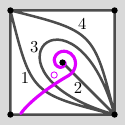}
\hspace*{10pt}
&
\includegraphics{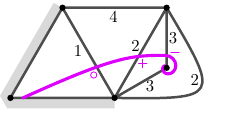}\\
$\alpha$ & $\g_\alpha=x_2^{-1}$&$\g_{\alpha'}=(x_2x_3)^{-1}x_3$\\[10pt]
\hline
\end{tabular}
\caption{Computing $\g_\alpha$ and $\g_{\alpha'}$ (self-folded triangle possibilities).}
\label{digonshear}
\end{figure}
\end{example}

\begin{lemma}\label{F lift lite}
$F(P_{\alpha})\big|_{y=1}=F(P_{\alpha'})\big|_{y=1}$, where the notation of specializing to ``$y=1$'' is shorthand for setting every tropical variable to $1$.
\end{lemma}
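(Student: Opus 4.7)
The plan is to construct an order-preserving bijection $\phi : P_{\alpha'} \to P_\alpha$ which is simultaneously weight-preserving once weights are expressed in the $T$-variables, so that the induced bijection on order ideals identifies the two specialized $F$-polynomials monomial by monomial. The map $\phi$ is defined via the tile-cover projection $(\S',\M') \to (\S,\M)$: an element of $P_{\alpha'}^\cross$ is a crossing of $\alpha'$ with an arc $\gamma' \in T'$, and projecting this crossing downward yields a crossing of $\alpha$ with the underlying arc of $T^\circ$, hence an element of $P_\alpha^\cross$. At each notched endpoint of $\alpha$, the reduction of \cref{reduce lite} ensures the endpoint is not the interior vertex of a self-folded triangle, so the same chain structure is adjoined to both $P_{\alpha'}^\cross$ and $P_\alpha^\cross$, and $\phi$ extends to those chains by the same projection. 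Since cover relations in both posets are determined by the same local ``left/right-of-arc'' data along the respective curves, and this local data is faithfully preserved by the tile construction, $\phi$ is a poset isomorphism.

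What remains is to verify, for each $e' \in P_{\alpha'}$ with $w(e') = \hy_{\gamma'}$ and $e = \phi(e') \in P_\alpha$ with $w(e) \in \{\hy_\gamma,\, \hy_\beta/\hy_\gamma\}$, the identity $\hy_{\gamma'}\big|_{y=1} = \tilde w(e)$, where the left-hand side is interpreted in the $T$-variables via the tile-cover lambda-length convention $x_{\gamma'} \in \{1,\, x_\gamma,\, x_\beta x_\gamma\}$ explained in \cref{lam sec}. When $\gamma$ is not incident to any self-folded triangle of $T^\circ$, the local neighborhood of $\gamma'$ in $T'$ is isomorphic to that of $\gamma$ in $T^\circ$, so the $\gamma$-column of $B(T)$ matches the $\gamma'$-column of $B(T')$ and the identity is immediate. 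The delicate case is when $\gamma$ is the interior or exterior edge of a self-folded triangle of $T^\circ$, since there the local structure genuinely differs between $T^\circ$ (which has a loop at that position) and $T'$ (which has an ordinary arc in a tile).

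The main obstacle is this self-folded-triangle local calculation: one must track how the loop identity $\lambda(\delta) = \lambda(\beta)\lambda(\gamma)$ of \eqref{loop eq lite} forces the $T'$-side Laurent monomial to collapse onto the $T$-side one. In particular, for each element labeled $\hy_\beta/\hy_\gamma$ in $P_\alpha$ (whose specialization is $1$ because the $\beta$- and $\gamma$-columns of $B(T)$ agree), one must check that the matched element $e'$ in $P_{\alpha'}$ also satisfies $\hy_{\gamma'}\big|_{y=1} = 1$ after substitution, which reduces to signed-adjacency bookkeeping in the strip or digon of tiles arising from the self-folded triangle. The verification splits into finitely many local configurations (depending on whether $\alpha$ passes through a self-folded triangle, ends at its interior puncture, or merely has a crossing adjacent to one). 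Once each is checked, multiplying weights along each ideal and summing yields the desired equality $F(P_\alpha)\big|_{y=1} = F(P_{\alpha'})\big|_{y=1}$.
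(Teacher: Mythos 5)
Your proposal follows the same route as the paper's proof: the posets $P_\alpha$ and $P_{\alpha'}$ are isomorphic by construction of the tile cover, so the lemma reduces to checking that corresponding elements carry equal weights after the tropical variables are set to $1$, with the only nontrivial cases arising from self-folded triangles. The issue is that you stop precisely where the content of the lemma begins: the phrases ``one must check'' and ``once each is checked'' defer the entire local verification, so as written the key step is asserted rather than proved. The verification is short, and here is how it closes. On the $(\S,\M)$ side one has $\tilde w(e)=\prod_{\beta\in T}x_\beta^{b_{\beta\gamma}}$, and on the tile-cover side the corresponding product is $\prod_{\beta'\in T'}\lambda(\beta')^{b_{\beta'\gamma'}}$, where $\lambda(\beta')=x_\beta x_\delta$ whenever $\beta'$ lifts the exterior edge of a self-folded triangle with interior edge $\delta$ (this is where \eqref{loop eq lite} enters, as you anticipated). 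Then: (i) if $\gamma$ is the exterior edge of a self-folded triangle with interior edge $\delta$, the tile covering that triangle has edge lambda lengths $x_\gamma x_\delta$, $x_\delta$, $x_\delta$, and the two $x_\delta$ contributions enter the exponent with opposite signs and cancel, exactly as the $x_\delta$ contributions cancel in $\prod_{\beta\in T}x_\beta^{b_{\beta\gamma}}$; (ii) if $\gamma$ is the interior edge, its lift $\gamma'$ is the common edge of two tiles, each with edge lambda lengths $x_\gamma$, $x_\gamma$, $x_\gamma x_\delta$, and the two edges of lambda length $x_\gamma x_\delta$ are \emph{adjacent} in the quadrilateral formed by the two tiles, so all contributions cancel and the product is $1$, matching $\tilde w(e)=1$ for the weight $\hy_\gamma/\hy_\delta$. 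These two observations (especially the adjacency in (ii), which is what forces total cancellation) constitute the whole proof; supplying them turns your outline into the paper's argument.
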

\begin{proof}
The posets are isomorphic by construction, and we check that corresponding ideals have the same weight in the sense of $\tilde w$ (setting the tropical variables to $1$ in $w$).
Indeed, it is simple to check that corresponding elements have the same weight.

If $\gamma$ is an arc in $T^\circ$, then $\hy_\gamma\big|_{y=1}$ is the product $\prod_{\beta\in T}x_\beta^{b_{\beta\gamma}}$.
Here $b_{\beta\gamma}$ is the $\beta\gamma$-entry of the exchange matrix (\ie signed adjacency matrix) associated to $T$.
If $\gamma$ is not the interior edge of a self-folded triangle, then there is a positive contribution to the exponent of $x_\beta$ for every triangle of~$T^\circ$ in which $\beta$ precedes $\gamma$ in clockwise order, and, if $\beta$ is the interior edge of a self-folded triangle with exterior edge $\delta$, a positive contribution to the exponent of $x_\beta$ for every triangle of~$T^\circ$ in which $\beta$ precedes $\delta$ in clockwise order.
Negative contributions to the exponent of $x_\beta$ are the same, but with ``clockwise'' replaced with ``counterclockwise'' throughout.
(If $\gamma$ is the exterior edge of a self-folded triangle and $\beta$ is the interior edge, then the positive and negative contributions to the exponents of $x_\beta$ cancel.)
If $\gamma$ is the interior edge of a self-folded triangle, then by the definition of the signed adjacency matrix, the product $\prod_{\beta\in T}x_\beta^{b_{\beta\gamma}}$ is the same as for the exterior edge.

For a lift of $\gamma$ to the edge $\gamma'$ of a tile in the tile cover $(\S',\M')$, the corresponding product is~$\prod_{\beta'\in T'}\lambda(\beta')^{b_{\beta'\gamma'}}$.
The contributions to this product have the same description, except that there are no self-folded triangles in $T'$.
Instead, when $\beta'$ is the lift of the exterior edge $\beta$ of a folded triangle whose interior edge is $\delta$, the lambda length $\lambda(\beta')$ is $x_\beta x_\delta$.
When $\gamma$ the exterior edge of a self-folded triangle with interior edge $\delta$, the tile for that triangle has edges with lambda lengths $x_\gamma x_\delta$, $x_\delta$, and $x_\delta$.
Thus the contributions of $x_\delta$ to $\prod_{\beta'\in T'}\lambda(\beta')^{b_{\beta'\gamma'}}$ cancel, just as they cancel in $\prod_{\beta\in T}x_\beta^{b_{\beta\gamma}}$.
When $\gamma$ is the interior edge of a self-folded triangle with exterior edge $\delta$, the lift $\gamma'$ is the edge of two tiles each with edges of lambda lengths $x_\gamma$, $x_\gamma$ and $x_\gamma x_\delta$, and the edges with lambda lengths $x_\gamma x_\delta$ are adjacent in the quadrilateral formed by the two tiles.  
Thus all of the contributions to $\prod_{\beta'\in T'}\lambda(\beta')^{b_{\beta'\gamma'}}$ cancel, so that the product is~$1$.

Suppose $e$ is an element of $P_\alpha$ and $e'$ is the corresponding element of $P_{\alpha'}$.
The above considerations show that if $w(e)=\hy_\gamma$ (so that $\gamma$ is not the interior edge of a self-folded triangle), then $\tilde w(e)$ and $\tilde w(e')$ both equal $\prod_{\beta\in T}x_\beta^{b_{\beta\gamma}}$.
They also show that if $w(e)=\hy_\gamma/\hy_\delta$ (so that $\gamma$ is the interior edge of a self-folded triangle with exterior edge $\delta$), then $\tilde w(e)$ and $\tilde w(e')$ both equal~$1$.
\end{proof}

Finally, we prove the coefficient-free case of the main result.

\begin{proof}[Proof of \cref{main lite}]
By \cref{reduce lite}, we need only prove the theorem for tagged arcs $\alpha$ that are plain at any endpoint that is the interior vertex of a self-folded triangle.
We write \eqref{main lite eq} as~${\lambda(\alpha)=\g_\alpha\cdot F(P_\alpha)\big|_{y=1}}$, with notation as in \cref{F lift lite}.

Suppose $\alpha$ is a tagged arc in $(\S,\M)$.
We argue by induction on the number of elements of $P_\alpha$ that $\lambda(\alpha)=\g_\alpha\cdot F(P_\alpha)\big|_{y=1}$.
Specifically, the inductive hypothesis is that the formula holds for any tagged arc $\alpha'$ in \emph{any} triangulated marked surface such that $P_{\alpha'}$ has fewer elements than $P_\alpha$.
The base of the induction is when $P_\alpha$ is empty.
In this case, the sum is empty, and $\alpha$ is either an arc in $T$ or is the exterior edge of a self-folded triangle in $T^\circ$.
If $\alpha$ is an arc in $T$, then $\g_\alpha=x_\alpha$, as desired.
If $\alpha$ is the exterior edge of a self-folded triangle in $T^\circ$, then $\g_\alpha=x_\beta x_{\beta'}$, where $\beta$ and $\beta'$ are the two tagged arcs in $T$ associated to the self-folded triangle.
This is correct in light of \eqref{loop eq lite}.

If $P_\alpha$ is not empty, then construct the tile cover $\alpha'$ in a marked surface $(\S',\M')$ with a triangulation $T'$.
By construction, $\alpha'$ has distinct endpoints and does not cross itself, so it is a tagged arc in $(\S',\M')$.
Also, $P_{\alpha'}$ is isomorphic (ignoring the weighting for a moment) to $P_\alpha$.
Furthermore,~$(T',\alpha')$ is tidy, so there is an arc $\gamma'\in T'$ such that $\hy_{\gamma'}$ occurs exactly once as a label in $P_\alpha$, and \cref{ideal decomp} applies.
Applying \eqref{ideal decomp eq} to $\alpha'$ and multiplying through by $x_\gamma\cdot\g_{\alpha'}$, we obtain 
\begin{equation}\label{ideal exch eq lite}
x_{\gamma'}\cdot\g_{\alpha'}\cdot F(P_{\alpha'})=x_{\gamma'}\cdot\g_{\alpha'}\cdot F(\Pb)+x_{\gamma'}\cdot\g_{\alpha'}\cdot\hy_\orange\cdot F(\Pr).
\end{equation}
The labeled posets $P_\blue$ and $P_\red$, their labels in the $\hy$-monomials, and the monomial $\hy_\orange$ all refer, of course, to the triangulation $T'$ of $(\S',\M')$.

The quantity $F(P_\blue)$ in \cref{ideal exch eq lite} is the product of one or two polynomials $F(P_{\delta'})$ for tagged arcs $\delta'$ in $(\S',\M')$, and each of these posets $P_{\delta'}$ has strictly fewer elements than $P_{\alpha'}$, which has the same number of elements as~$P_\alpha$.
By induction, the lambda length $\lambda({\delta'})$ in $(\S',\M')$ is~$\g_{\delta'}\cdot F(P_{\delta'})\big|_{y=1}$.
Each $\delta'$ is a tagged arc, so this lambda length is a cluster variable $x_{\delta'}$.
Similarly,~$F(P_\red)$ is the product of one or two polynomials $F(P_{\delta'})$ and by induction, the lambda length of each $\delta'$ is $\g_{\delta'}\cdot F(P_{\delta'})\big|_{y=1}$, which is again a cluster variable $x_{\delta'}$.

Specializing all tropical variables to $1$ and appealing to \cref{g prop}, the right side of \eqref{ideal exch eq lite} becomes the right side of the exchange relation exchanging $x_{\alpha'}$ and $x_{\gamma'}$.
We conclude that the cluster variable $x_{\alpha'}$ is given by $\g_{\alpha'}\cdot F(P_{\alpha'})\big|_{y=1}$.
This cluster variable is the lambda length $\lambda(\alpha')$.

We have seen that $\lambda(\alpha')=\g_{\alpha'}\cdot F(P_{\alpha'})\big|_{y=1}$, which shows that~$\lambda(\alpha)=\g_{\alpha}\cdot F(P_{\alpha})\big|_{y=1}$ by Lemmas~\ref{lambda lift},~\ref{g lift} and~\ref{F lift lite}.
\end{proof}

\section{Tropical hyperbolic geometry and the principal coefficients case}\label{sec:tropical}

We now prove the full version of \cref{main}, which can be rephrased as follows: the cluster variable~$x_\alpha$ equals the $\g$-vector~$\g_\alpha$ times the weighted sum~$F(P_\alpha)$.
The proof follows the same outline and uses some of the same results as in \cref{sec:hyperbolic}.
We begin by explaining how cluster variables with principal coefficients can be realized in terms of laminations and certain hyperbolic lengths and corresponding tropical hyperbolic lengths.
More details are found in \cite[Chapters~9--15]{FominThurston}.

\subsection{Opened surfaces}\label{opened sec}
As before, we are given a marked surface $(\S,\M)$ and a tagged triangulation $T$ with all arcs tagged plain, except possibly at some punctures $p$ incident to exactly two tagged arcs of~$T$, identical except for opposite taggings at $p$.
The corresponding ordinary triangulation is $T^\circ$.
The tagged triangulation determines a multi-lamination $\L$ of $(\S,\M)$.
General choices of~$\L$ lead to arbitrary coefficients of geometric type, but for principal coefficients at $T$, the multi-lamination~$\L$ consists of elementary laminations $L_\gamma$ for tagged arcs $\gamma\in T$, as defined in \cref{subsec:exchange}.
We will also consider a multilamination $\AA$ consisting of elementary laminations for arcs in~$T^\circ$.

Recall the construction reviewed in \cref{lam sec}:
Once $(\S,\M)$ and $T$ are fixed, there is a bijection from the set of tuples of positive numbers indexed by $T$ to the set of decorated hyperbolic structures on $(\S,\M)$---meaning hyperbolic structures with constant curvature $-1$ and distinguished horocycles about each marked point---with all boundary segments having lambda length~$1$.
The tuples of positive numbers describe the lambda lengths $x_\alpha$ of the arcs $\alpha\in T$.
Once a particular decorated hyperbolic structure is chosen (or equivalently, once the lambda lengths $x_\alpha$ are chosen), every tagged arc has a lambda length.
The lambda length of a tagged arc $\gamma$, as a function of the lambda lengths $x_\alpha$, is the coefficient-free cluster variable associated to $\gamma$.

The analogous construction with principal coefficients is summarized as follows:
Once $(\S,\M)$ and $T$ are fixed (and thus $\L$ is fixed to be the elementary laminations associated to $T$), we choose \emph{two} tuples of positive numbers, each tuple indexed by $T$.
For each tagged arc $\alpha\in T$, we choose a positive real number $x_\alpha$ that will eventually be its laminated lambda length.
For each elementary lamination $L_\gamma$ (for $\gamma\in T$), we choose a positive real weight $y_\gamma$ that will play the role of a tropical variable.
(In \cite{FominThurston}, these tropical variables are denoted $q_i$.)
These pairs of tuples determine an opening of $\S$ (a surface in which some of the punctures are replaced by holes, meaning removed disks with an orientation), a hyperbolic metric with curvature $-1$ on the opened surface and horocycles on all of the non-opened marked points, with boundary conditions on the metric that determine the lambda lengths of the boundary segments and the holes in terms of the $y_\gamma$.
Once an opening, hyperbolic metric, and horocycles are chosen (or equivalently, once the laminated lambda lengths $x_\alpha$ and the weights $y_\gamma$ are chosen), every tagged arc has a laminated lambda length.
The laminated lambda length of an arc $\gamma$, as a function of the $x_\alpha$ and the $y_\gamma$, is the principal-coefficients cluster variable associated to $\gamma$.
We now provide more details.

The \newword{transverse measure} $l_{L_\gamma}(p)$ of a puncture $p$ with respect to an elementary lamination~$L_\gamma$ is $1$ for each endpoint of $\gamma$ that is at~$p$ and tagged plain and $-1$ for each endpoint of $\gamma$ that is at~$p$ and tagged notched, and otherwise zero.
Since it is possible for $\gamma$ to have both endpoints at~$p$ if they have the same tagging, these contributions can combine to make $2$ or $-2$ but can never cancel.
(In this definition of transverse measure of a puncture, we have combined \cite[(14.2)]{FominThurston} with the definition of $L_\gamma$ and the definition of a lift of a lamination that will be given below.)
The \newword{transverse measure} $l_{L_\gamma}(\beta)$ of a boundary segment $\beta$ with respect to $L_\gamma$ is the number of times ($0$, $1$, or $2$) that $L_\gamma$ has an endpoint on $\beta$.  
Thus if $q$ is the right endpoint of $\beta$ (right as we stand on~$\beta$ looking towards the interior of $\S$), then in light of the definition of $L_\gamma$, the transverse measure of $\beta$ is the number of endpoints of $\gamma$ that are at $q$.
The \newword{tropical lambda length} is defined to be $c(s)=\prod_{\gamma\in T}y_\gamma^{l_{ L_\gamma}(s)}$ for $s$ a puncture or boundary segment.
(Later, when we define tropical lambda lengths of tagged arcs, we use the notation $c_{\overline\L}$.
There, $\overline\L$ stands for a particular lift of the multilamination $\L$ to the opened surface.
However, the tropical lambda lengths of punctures and boundary segments are independent of the lift, and we suppress the subscript $\overline\L$ here.)

To make an \newword{opening} of $(\S,\M)$, we choose some subset $\P$ of the punctures and open those by replacing each $p\in\P$ with a hole $C_p$ (a new boundary component where an open disk is removed from $\S$) with marked point $M_p$ on $C_p$.
Each $C_p$ for $p\in\P$ is given an orientation, clockwise or counterclockwise.
The opening depends on the choice of the weights $y_\gamma$.
Specifically, we use these weights to assign lambda lengths to holes and boundary segments in the opened surface, thus constraining the choice of which holes are opened, the orientations of opened holes, the hyperbolic metric, and the choice of horocycles at marked points on the boundary, as we now explain.

Assuming some opening of $(\S,\M)$, choose a finite-area hyperbolic metric $\sigma$ with constant curvature~$-1$ with a cusp at each \emph{non-opened} marked point, with each boundary segment a geodesic, including the holes $C_p$.
(We emphasize that there are no cusps at the points $M_p$ for opened punctures~$p$.)
The \newword{signed length} $l(p)$ of an opened puncture $p$ is the hyperbolic length of $C_p$ if~$C_p$ is oriented clockwise in the opened surface or the negative of the length of $C_p$ if it is oriented counterclockwise.
The signed length of an unopened puncture is~$0$.
Assuming also some choice of horocycles at marked points on the boundary, we define the \newword{signed length} $l(\overline\alpha)$ of a boundary component between the horocycles at its endpoints as in \cref{lam sec}.
The \newword{lambda length} of a puncture or boundary segment~$s$ is defined to be $\lambda(s)=e^{l(s)/2}$.

We constrain the choice of the hyperbolic metric $\sigma$ by specifying lambda lengths for the punctures and boundary segments:
Every puncture or boundary segment $s$ had lambda length $\lambda(s)$ equal to the tropical lambda length~$c(s)$.
The tropical lambda lengths of punctures and boundary segments depend only on the weights~$y_\gamma$.
(For more general coefficients, they depend on the choice of a weighted multilamination, but we have fixed principal coefficients and thus fixed the multilamination.)
Thus the assigned lambda lengths of punctures and boundary segments also depend only on the weights $y_\gamma$.
In particular, the weights $y_\gamma$ determine the opening of surface:
If $c(p)=1$ for some puncture $p$, then the puncture $p$ is not opened.
If $c(p)>1$, then $p$ is opened and~$C_p$ is oriented clockwise, and if $c(p)<1$ then $p$ is opened and $C_p$ is oriented counterclockwise.

\subsection{Laminated lambda lengths}\label{lam lam sec}
We have seen how the weights $y_\lambda$ determine an opening of $(\S,\M)$ and place constraints on the hyperbolic metric and horocycles by specifying lambda lengths of the opened punctures and the boundary segments.
We will eventually impose additional constraints coming from the~$x_\gamma$.
But first, we define laminated lambda lengths of tagged arcs.

At each $M_p$, for $p$ an opened puncture, there is a \newword{perpendicular horocyclic segment}~$h_p$ near $C_p$.
This is a short segment of the horocycle that contains $M_p$ and is perpendicular to every geodesic spiraling into $C_p$ in the direction of the chosen orientation.
We also define a \newword{conjugate perpendicular horocycle} $\overline h_p$ near $C_p$:
Let $\overline M_p$ be the point on $C_p$ a \emph{signed} distance~$2\ln\left|\lambda(p)-\lambda(p)^{-1}\right|$ from $M_p$ in the direction \emph{against} the orientation of $C_p$.
Let $\overline h_p$ be the segment at $\overline M_p$ 
that is perpendicular to every geodesic spiraling into $C_p$ opposite of the chosen orientation.

Now let $\alpha$ be any tagged arc in $(\S,\M)$.
We will need two different types of lifts of $\alpha$ to the opened surface.
First, an arbitrary lift $\overline\alpha$ in a topological sense, meaning that $\overline\alpha$ maps to $\alpha$ under that map that collapses all opened punctures back down to punctures in $(\S,\M)$.
Second, a corresponding geodesic~$\alpha_\sigma$ in the opened surface:
The geodesic $\alpha_\sigma$ is like a lift of $\alpha$ except that at any endpoint $p$ of alpha that is opened, the geodesic spirals around $C_p$, with the direction of the chosen orientation of $C_p$ if $\alpha$ is tagged plain at $p$ or against the orientation of $C_p$ if $\alpha$ is tagged notched there.

We will define the laminated lambda length of $\alpha$ as the ratio of the ordinary lambda length and tropical lambda length of the lift $\overline\alpha$.
In particular, this ratio will not depend on the choice of lift.

We first define the lambda length of $\overline\alpha$, beginning with the case where, at any endpoints of $\alpha$ that are opened punctures, $\overline\alpha$ twists far enough about $C_p$ in the direction that $\alpha_\sigma$ spirals.
We choose a segment $[\alpha_\sigma]$ of $\alpha_\sigma$ that starts at the intersection of $\alpha_\sigma$ with a horocycle or perpendicular horocyclic segment at one endpoint of $\alpha$ (a conjugate horocycle or conjugate perpendicular horocyclic segment if $\alpha$ is notched at that endpoint), follows $\alpha_\sigma$ and ends at the intersection of $\alpha_\sigma$ with a horocycle or perpendicular horocyclic segment at the other endpoint (again conjugate if $\alpha$ is notched there).
When $\alpha_\sigma$ spirals about an opened puncture at one or both endpoints, there are infinitely many choices of such a segment, but we choose it so that when we extend $[\alpha_\sigma]$ in a particular way, we obtain a curve that is isotopic to $\overline\alpha$:
At an endpoint $p$ of $\alpha$ that is not an opened puncture, we extend $[\alpha_\sigma]$ along $\alpha_\sigma$ all the way to~$p$.
At an endpoint $p$ of $\alpha$ that is an opened puncture where~$\alpha$ is tagged plain, we extend $[\alpha_\sigma]$ by following the perpendicular horocyclic segment~$h_p$ to~$M_p$.
At an endpoint $p$ of $\alpha$ that is an opened puncture where $\alpha$ is tagged notched, we extend~$[\alpha_\sigma]$ by following the conjugate perpendicular horocyclic segment $\overline h_p$ to $\overline M_p$ and then following $C_p$ to~$M_p$, in the direction that \emph{agrees} with the orientation of $C_p$.
The \newword{signed length} $l([\alpha_\sigma])$ is the hyperbolic length of $[\alpha_\sigma]$, signed as in \cref{lam sec}.
(Thus $l([\alpha_\sigma])$ is negative when the endpoint of~$[\alpha_\sigma]$ where~$\alpha_\sigma$ crosses a horocycle $h_p$ is closer to the other endpoint of~$\alpha$ than it is to $p$.
When $p$ is an opened puncture, one can choose the lift $\overline\alpha$ so that $[\alpha_\sigma]$ spirals far enough about $C_p$ to make~$l([\alpha_\sigma])$ positive.)
The \newword{lambda length} $\lambda(\overline\alpha)$ is $e^{l([\alpha_\sigma])/2}$.

If $\overline\alpha$ does not spiral far enough about some opened puncture, its lambda length is defined by the following rule:
When we pass from $\overline\alpha$ to a different lift with one additional spiral in the clockwise direction, we multiply the lambda length by $\lambda(p)$.
By adding clockwise and/or counterclockwise twists at one or both endpoints of $\overline\alpha$, we obtain a different lift that ``twists far enough'' as in the paragraph above, and then apply the rule to find the lambda length of $\overline\alpha$.
(The well-definition of the lambda length, in light of this rule, is establishes as \cite[Lemma~10.7]{FominThurston}.  
See \cite[Definition~10.6]{FominThurston}.)

We next define the tropical lambda length $c_{\overline\L}(\overline\alpha)$ of $\overline\alpha$.
Following \cite[Section~14]{FominThurston}, we define the tropical lambda length for a lift of $\alpha$ to a ``fully'' opened surface, meaning a surface where \emph{every} puncture of $(\S,\M)$ is opened.  
We reuse the notation $\overline\alpha$ for this lift.
As explained below in \cref{fully rem}, this is harmless because the tropical lambda length is independent of the details of how $\alpha$ is lifted to the fully opened surface near punctures that are not opened in the partially opened surface. 

We arbitrarily choose, for each elementary lamination $L_\gamma$, a \newword{lifted lamination} $\overline L_\gamma$ in the (fully) opened surface that corresponds to $L_\gamma$ except that anywhere $L_\gamma$ spirals into a puncture $p$ that is opened, the lift $\overline L_\gamma$ instead ends on $C_p\setminus\{M_p\}$ and formally retains a ``memory'' of the spiral in the following sense:
$\overline L_\gamma$ formally includes the data of an orientation of $C_p$ in the direction \emph{opposite} to the direction that $L_\gamma$ spirals.
(Recall that the opening of the surface also includes an orientation of $C_p$ for every puncture $p$ that is opened.
The orientations in the opening and orientations in the lifted laminations are independent, and we will be careful to distinguish them.)
We write $\overline\L$ for the multilamination consisting of the chosen lift $\overline L_\gamma$ of each $L_\gamma$.
The laminated lambda lengths of tagged arcs depend on this choice of liftings, but we fix the choice and we need not be concerned with the dependence.
For details, see \cite[Remark 14.8]{FominThurston}.

To define the \newword{tropical lambda length} $c_{\overline\L}(\overline\alpha)$ of $\overline\alpha$, we first suppose that, at each endpoint $p$ of~$\alpha$, the lift $\overline\alpha$ twists sufficiently many times around $C_p$ in the direction given by $\overline L_\gamma$ or in the opposite direction if $\alpha$ is tagged notched at the shared endpoint.
The tropical lambda length is defined by way of the transverse measure.
We assume that isotopy representatives of $\overline\alpha$ and~$\overline L_\gamma$ have been chosen to minimize intersections.
The \newword{transverse measure} $l_{\overline L_\gamma}(\overline\alpha)$ of $\overline\alpha$ relative to a lifted elementary lamination $\overline L_\gamma$ is the number of intersections between $\overline\alpha$ and $\overline L_\gamma$, plus an additional contribution $|l_{L_\gamma}(p)|$ for each endpoint $p$ where $\alpha$ is tagged notched.
(The additional contribution~$|l_{L_\gamma}(p)|$ is $1$ or $2$; it is $2$ if and only if $\gamma$ has both endpoints at~$p$.)
The \newword{tropical lambda length} is defined to be $c_{\overline\L}(\overline\alpha)=\prod_{\gamma\in T}y_\gamma^{l_{\overline L_\gamma}(\overline\alpha)}$.

If $\overline\alpha$ does not twist sufficiently far about some opened puncture~$p$, then the tropical lambda length $c_{\overline\L}(\overline\alpha)$ is determined using the following rule:  When we pass from $\overline\alpha$ to a different lift with one additional spiral in the clockwise direction (or counterclockwise if $\alpha$ is notched at $p$), we multiply the tropical lambda length by $c(p)$.

\begin{remark}\label{fully rem}
It is harmless to define the tropical lambda length of $\overline\alpha$ in terms of a lift to the fully opened surface.
To see why, recall that a puncture $p$ is un-opened if and only if $c(p)=1$.
Since lifts of $\alpha$ that differ only in the number of twists at $p$ are related by multiples of $c(p)$, the tropical lambda length of $\overline\alpha$ is independent of how we lift $\overline\alpha$ further to the fully opened surface.
\end{remark}

The \newword{laminated lambda length} of $\alpha$ is $x_{\overline\L}(\alpha)=\frac{\lambda(\overline\alpha)}{c_{\overline\L}(\overline\alpha)}$.
Because of the rules for how lambda lengths and tropical lambda lengths change when we change the lift of $\alpha$ and because ${\lambda(p)=c(p)}$ for every puncture $p$, the laminated lambda length is well defined (i.e.\ independent of the choice of lift $\overline\alpha$).

Once the tagged triangulation $T$ and the lifts $\overline\L$ of the elementary laminations~$L_\gamma$ for~${\gamma\in T}$ are fixed, the choice of positive real weights $y_\gamma$ and the choice of positive real numbers~$x_\gamma$ uniquely determines an opening of $(\S,\M)$, hyperbolic metric on the opened surface, and horocycles about the non-opened marked points such that $x_{\overline\L}(\gamma)=x_\gamma$ for every $\gamma\in T$ \cite[Corollary~15.5]{FominThurston}.  
Thus, given $T$, $\overline\L$, the $y_\gamma$ and the $x_\gamma$, every tagged arc in $(\S,\M)$ has a well-defined (principal-coefficients) laminated lambda length.
The laminated lambda lengths of tagged arcs, when viewed as functions of the $x_\gamma$ and the $y_\gamma$, are the cluster variables in the cluster algebra with initial cluster $\{x_\gamma:\gamma\in T\}$, initial exchange matrix equal to the signed adjacency matrix of $T$, principal coefficients at $T$, and tropical variables $y_\gamma$ for $\gamma\in T$.
(This is part of \cite[Theorem~15.6]{FominThurston}.)

Recall from \cref{lam sec} (\cref{loop eq lite}) that the lambda lengths of the tagged arcs in $T$ determine the lambda lengths of the ordinary arcs in $T^\circ$ and vice versa.
Precisely the same thing is true for laminated lambda lengths.
Suppose $\beta$ and $\gamma$ are tagged arcs in $T$ that coincide except that both are tagged plain at one endpoint $q$ but they have different taggings at $p$, and suppose that $\delta$ is the ordinary arc in $T^\circ$ that is a loop at $q$, following $\beta$ and $\gamma$ around $p$.
Then 
\begin{equation}\label{loop eq}
x_{\overline\L}(\delta)=x_{\overline\L}(\beta)x_{\overline\L}(\gamma).
\end{equation}
The ordinary arc $\delta$ is not a tagged arc, but it has a laminated lambda length, measured from~$h_q$ to~$h_q$.
\cref{loop eq} is a combination of \cite[Lemma~10.14]{FominThurston}, which says that $\lambda(\overline\delta)=\lambda(\overline\beta)\lambda(\overline\gamma)$ for appropriate lifts $\overline\delta$, $\overline\beta$, and $\overline\gamma$, and \cite[Lemma~14.10]{FominThurston}, which says that $c_{\overline\L}(\overline\delta)=c_{\overline\L}(\overline\beta)c_{\overline\L}(\overline\gamma)$.

\subsection{Proof of the main theorem}\label{proof sec}
In this section, we prove \cref{main}.

Let $\alpha$ be a tagged arc in $(\S,\M)$.
For the proof of \cref{main lite}, we constructed the tile cover~$\alpha'$ of $\alpha$ in a surface $(\S',\M')$ with triangulation $T'$ and lifted the hyperbolic structure of $(\S,\M)$ to the tile cover, so that lambda lengths were preserved.
For the proof of \cref{main}, we lift all of the data that defines the laminated lambda lengths to the tile cover.
As before, when passing to the tile cover, we assume that when $\alpha$ has an endpoint that is the interior vertex of a self-folded triangle in $T^\circ$, the tagging of $\alpha$ is plain at that endpoint.

Start with the tagged triangulation $T$ of $(\S,\M)$ and choose positive real numbers $x_\gamma$ and $y_\gamma$ for each tagged arc $\gamma\in T$.
Let these choices determine an opening of $(\S,\M)$, a hyperbolic metric on the opened surface, and a choice of horocycles about the non-opened marked points, as in \cref{lam lam sec}.
In particular, the laminated lambda length of each tagged arc $\gamma\in T$ is $x_\gamma$.

We lift these laminated lambda lengths to determine laminated lambda lengths of all tile edges in $(\S,\M)$.
\cref{loop eq} determines the laminated lambda lengths of all arcs in $T^\circ$ that are not in~$T$:
For every self-folded triangle in $T^\circ$ corresponding to tagged arcs $\beta$ and $\gamma$ in $T$, with $\beta$ tagged notched and $\gamma$ tagged plain at the interior vertex, the exterior edge of the self-folded triangle has laminated lambda length $x_\beta x_\gamma$.
Thus we lift laminated lambda lengths of tagged arcs in~$T$ to laminated lambda lengths of tile edges in~$T'$ exactly as we lifted lambda lengths of tagged arcs in \cref{tile cov sec}.
(See \cref{tile ex 2}.)

The weighted multilamination on $(\S,\M)$ that defines an opening (and places some constraints on the metric) consists of elementary laminations $L_\gamma$ with weights $y_\gamma$, for $\gamma\in T$.
Since the tagged arcs $\gamma\in T$ don't necessarily all lift to the tile edges in $T'$ (but rather the arcs in $T^\circ$ do), this weighted multilamination does not lift to the right weighted lamination on $(\S',\M')$.
We will define an alternative weighted multilamination $\AA$ that defines the same opening and places the same constraints on the metric.
Specifically, $\AA$ consists of the elementary lamination~$L_\gamma$ for each~$\gamma\in T^\circ$, with weight is $y_\gamma$ unless $\gamma$ is an edge of a self-folded triangle in $T^\circ$.
For the edges of a self-folded triangle corresponding to tagged arcs $\beta$ and $\gamma$ in $T$, with $\beta$ tagged notched and $\gamma$ tagged plain at the interior vertex, the weights are as follows:
Let $\delta\in T^\circ$ be the exterior edge of the self-folded triangle and identify the interior edge with $\gamma$.
Then the weight of $L_\gamma$ in $\AA$ is $\frac{y_\gamma}{y_\beta}$ and the weight of $L_\delta$ is $y_\beta$.

\begin{example}\label{tile ex 2 remix}
We describe the weights in the alternative multilamination $\AA$ in the left picture of \cref{tilex1}.
Arcs labeled $3$ have weights $\frac{y_3}{y_2}$ and arcs labeled $11$ have weights $\frac{y_{11}}{y_{10}}$.
Any other arc labeled $i$ has weight $y_i$.
\end{example}

The following lemma is immediate from the definition of tropical lambda lengths of boundary segments and punctures.

\begin{lemma}\label{alt OK}
The weighted multilaminations $\L$ and $\AA$ assign the same tropical lambda lengths to punctures and boundary segments as the principal coefficients weighted lamination.
\end{lemma}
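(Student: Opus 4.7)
The plan is to prove the lemma by comparing, factor by factor, the products defining the tropical lambda lengths for $\L$ and for $\AA$. First I would observe that $\L$ and $\AA$ differ only at self-folded triangles of $T^\circ$: every tagged arc $\gamma\in T$ that is not part of a self-folded triangle also occurs as an arc of $T^\circ$ (as itself), carries weight $y_\gamma$ in both multilaminations, and contributes the same factor $y_\gamma^{l_{L_\gamma}(s)}$ to each tropical lambda length. So, localizing to one self-folded triangle at a time, it is enough to check that the remaining factors agree.

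Now fix a self-folded triangle with interior vertex $p$, let $\beta,\gamma\in T$ be the pair of tagged arcs there (with $\beta$ notched and $\gamma$ plain at $p$), and let $\delta\in T^\circ$ be the exterior loop, whose two endpoints are at the other marked point $q$. The joint contribution of this triangle to the tropical lambda length via $\L$ is $y_\gamma^{l_{L_\gamma}(s)}y_\beta^{l_{L_\beta}(s)}$, while the joint contribution via $\AA$ is $(y_\gamma/y_\beta)^{l_{L_\gamma}(s)}y_\beta^{l_{L_\delta}(s)}$. Setting these equal and cancelling $y_\gamma^{l_{L_\gamma}(s)}$ reduces the lemma to the single arithmetic identity
\[
l_{L_\beta}(s)+l_{L_\gamma}(s)=l_{L_\delta}(s)
\]
for every puncture or boundary segment $s$.

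I would then verify this identity by a short case analysis straight from the combinatorial definitions of transverse measures. For a puncture $s$: if $s=p$, then $l_{L_\gamma}(p)=+1$ (plain endpoint of $\gamma$ at $p$), $l_{L_\beta}(p)=-1$ (notched endpoint of $\beta$ at $p$), and $l_{L_\delta}(p)=0$ since $\delta$ has no endpoint at $p$, so both sides vanish. If $s=q$ and $q$ is a puncture, each of $\beta,\gamma$ contributes $+1$ (their plain endpoints at $q$) while $\delta$ contributes $+2$, so both sides equal $2$. At every other puncture all three transverse measures are zero. For a boundary segment $s$ with right endpoint $q'$, the only nontrivial case is $q'=q$ with $q$ on the boundary, where once more both sides equal $2$; all other cases reduce to $0=0$.

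There is no real obstacle here: the identity is pure bookkeeping. The useful thing to notice is that $\beta$ and $\gamma$ differ only in the tagging at $p$, so their signed contributions at $p$ cancel and match $\delta$'s vanishing contribution there, while at $q$ they add to match the doubled contribution of the loop $\delta$. Once this identity is in hand, the lemma follows by multiplying the matching self-folded-triangle factors with the trivially matching factors from the remaining arcs.
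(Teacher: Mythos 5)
Your proof is correct and matches the paper's (the paper simply declares the lemma immediate from the definitions of transverse measure and tropical lambda length; your factor-by-factor reduction to the identity $l_{L_\beta}(s)+l_{L_\gamma}(s)=l_{L_\delta}(s)$ and the ensuing case check is exactly the bookkeeping that assertion leaves implicit). No gaps.
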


\cref{alt OK} means that the weighted multilamination $\AA$ defines the same opening of $(\S,\M)$ as~$\L$.
Furthermore, the lambda lengths of punctures and boundary components are the same for the two weighted laminations.
We will see that, for the right choice of lifting of the two multilaminations to the opened surface, the tropical lambda lengths of all tagged arcs are also the same.

Choose a lift $\overline\L$ of the principal-coefficients weighted multilamination $\L$ to the opened surface so that, whenever two tagged arcs $\beta,\gamma\in T$ coincide except that $\beta$ is notched at $p$ and $\gamma$ is tagged plain at $p$, the corresponding lifted elementary laminations~$\overline L_\beta$ and $\overline L_\gamma$ differ only in that~$\overline\beta$ twists one additional time about $C_p$ in the clockwise direction (and also necessarily differ in the orientation of~$C_p$, with $\overline L_\gamma$ having a clockwise orientation).
Choose a lift $\overline\AA$ of the alternative weighted lamination $\AA$ that agrees with the lift of $\L$ where the arcs in $T^\circ$ coincide with plain-tagged arcs in $T$ and for every arc $\delta$ in $T^\circ$ that is the exterior edge of a self-folded triangle with interior edge $\gamma$, the lift of $L_\delta$ traces closely along the lift of $L_\gamma$, goes around the interior vertex, and traces closely back along the lift of $\gamma$.

\begin{lemma}\label{alt really OK}
If $\alpha$ is a tagged arc and $\overline\alpha$ is any lift of $\alpha$, then $c_{\overline\L}(\overline\alpha)=c_{\overline\L}(\overline\alpha)$.
\end{lemma}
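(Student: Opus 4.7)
The plan is a factor-by-factor comparison of the two tropical lambda lengths, both of which are products of powers of the $y_\gamma$'s with transverse measures as exponents. For every tagged arc $\gamma\in T$ that is not involved in any self-folded triangle of $T^\circ$, the multilaminations $\L$ and $\AA$ share the elementary lamination $L_\gamma$, its weight $y_\gamma$, and (by the definition of $\overline\AA$) its lift. Thus these factors agree, and the whole comparison reduces to a local check at each self-folded triangle.

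Fix such a triangle, with interior edge $\gamma\in T$ plain at the interior vertex $p$, its notched partner $\beta\in T$, and exterior loop $\delta\in T^\circ$. Writing $l_\eta$ as shorthand for $l_{\overline L_\eta}(\overline\alpha)$, the required identity of local contributions is
\[
y_\gamma^{l_\gamma}\cdot y_\beta^{l_\beta}=\left(\frac{y_\gamma}{y_\beta}\right)^{l_\gamma}\cdot y_\beta^{l_\delta},
\]
using that the chosen lift of $L_\gamma$ under $\overline\L$ coincides with the chosen lift of $L_\gamma$ under $\overline\AA$. After cancellation this reduces to the single transverse-measure identity $l_\delta=l_\gamma+l_\beta$. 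The identity is geometric: by the prescription defining $\overline\AA$, the lift $\overline L_\delta$ is built by tracing along $\overline L_\gamma$ out of $q$, wrapping once around the hole $C_p$, and tracing back along the companion lift at $\gamma$, which away from $C_p$ coincides with $\overline L_\beta$ (the two lifts differ only in the direction of the spiral at $C_p$). By the standing reduction of \cref{reduce}, if $\alpha$ has an endpoint at $p$ then $\alpha$ is tagged plain there, so the segment of $\overline L_\delta$ that wraps $C_p$ contributes no transverse crossings with $\overline\alpha$. This yields a bijection between crossings of $\overline\alpha$ with $\overline L_\delta$ and the disjoint union of crossings of $\overline\alpha$ with $\overline L_\gamma$ and with $\overline L_\beta$.

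The main obstacle is propagating this counted identity through the endpoint-correction terms $|l_{L_\eta}(p')|$ that are added to the transverse measure whenever $\alpha$ is tagged notched at a puncture $p'$ shared with $\eta$. I would check by a case split (on how the loop $\delta$ meets $p'$) that $|l_{L_\delta}(p')|=|l_{L_\gamma}(p')|+|l_{L_\beta}(p')|$; for $p'=p$ our reduction forces $\alpha$ to be plain and the corrections vanish, and for $p'\neq p$ the correction at each of $L_\gamma$, $L_\beta$, $L_\delta$ depends only on the incidence at $p'$ of the underlying arc, so the bookkeeping is straightforward. Lastly, if $\overline\alpha$ does not twist sufficiently far about some opened puncture, the twist-multiplication rule reduces the claim to the sufficient-twist case, combined with \cref{alt OK} to note that $c(p)$ is computed identically under $\overline\L$ and $\overline\AA$.
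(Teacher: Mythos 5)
Your overall strategy is the same as the paper's: the comparison is trivial away from self-folded triangles, and at each self-folded triangle (interior edge $\gamma$, notched partner $\beta$, exterior loop $\delta$) it reduces, after cancelling the common lift of $L_\gamma$, to the single transverse-measure identity $l_{\overline L_\delta}(\overline\alpha)=l_{\overline L_\gamma}(\overline\alpha)+l_{\overline L_\beta}(\overline\alpha)$. That reformulation is a clean way to package what the paper verifies multiplicatively, and your handling of the away-from-$p$ contributions (each crossing of the underlying curve gives $1+1$ on the $\overline\L$ side versus $1+2$ distributed as $l_\gamma+l_\delta$ on the $\overline\AA$ side) and of the insufficient-twist case via the twist-multiplication rule both match the paper.

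The gap is in your treatment of an endpoint of $\alpha$ at the interior vertex $p$, which is exactly where the content of the lemma lives. Your claim that ``the segment of $\overline L_\delta$ that wraps $C_p$ contributes no transverse crossings with $\overline\alpha$'' is false: when $\alpha$ ends at $p$, the lift $\overline\alpha$ terminates on $C_p$, which lies inside the loop that $\overline L_\delta$ makes around the hole, so $\overline\alpha$ must cross $\overline L_\delta$ an odd number of times near $C_p$ --- in minimal position, once more than twice its crossing number with $\overline L_\gamma$ there. That extra crossing is indispensable: on the $\overline\L$ side it is matched by the extra crossing of $\overline\alpha$ with $\overline L_\beta$ coming from the fact that $\overline L_\beta$ twists one additional time clockwise about $C_p$ relative to $\overline L_\gamma$. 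Your proposed bijection identifies the two strands of $\overline L_\delta$ with $\overline L_\gamma$ and $\overline L_\beta$, but both strands trace $\overline L_\gamma$; neither carries $\overline L_\beta$'s extra twist, so the strand-by-strand count near $C_p$ does not reproduce $l_\beta$. Concretely, if $\overline\alpha$ is normalized to cross $\overline L_\gamma$ exactly $k$ times near $p$, the correct counts are $l_\gamma=k$, $l_\beta=k+1$, $l_\delta=2k+1$ (the $+1$ from the wrap), and dropping the wrap's contribution leaves the two sides of your identity off by a factor of $y_\beta$. This is precisely the computation the paper carries out. A secondary point: by invoking \cref{reduce} you only prove the lemma for arcs tagged plain at $p$; the statement (and the paper's proof, in its last sentence) also covers the notched case, though the plain case is all that is used downstream.
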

\begin{proof}
Suppose $\delta$ is the exterior edge of a self-folded triangle with interior edge $\gamma$ and $\beta$ is the tagged arc that agrees with $\gamma$ but is tagged notched at the interior vertex~$p$.
We need to check that the (multiplicative) contributions of $\overline L_\beta$ and $\overline L_\gamma$ to $c_{\overline\L}(\overline\alpha)$ are the same as the contributions of~$\overline L_\gamma$ and $\overline L_\delta$ to~$c_{\overline\AA}(\overline\alpha)$.

Away from any endpoint of $\alpha$ that is at $p$, the lifts $\overline L_\beta$ and $\overline L_\gamma$ agree, and there is contribution $y_\beta y_\gamma$ to $c_{\overline\L}(\overline\alpha)$ each time $\overline\alpha$ intersects $\overline L_\beta$ and $\overline L_\gamma$.
The corresponding contribution to $c_{\overline\AA}(\overline\alpha)$ is $y_\beta^2\frac{y_\gamma}{y_\beta}$, because the weight of $\delta$ is $y_\beta$, the weight of $\gamma$ is $\frac{y_\gamma}{y_\beta}$, and (away from $p$), $\overline L_\delta$ traces along $\overline L_\gamma$ twice.

To deal with contributions to $c_{\overline\L}(\overline\alpha)$ and $c_{\overline\AA}(\overline\alpha)$ at any endpoints of $\alpha$ that are at $p$, we first look at how these contributions change when $\overline\alpha$ changes.
Since $\overline L_\beta$ carries a counterclockwise orientation of $C_p$ and $\overline L_\gamma$ carries a clockwise orientation, whenever $\overline\alpha$ is given an additional clockwise twist at~$C_p$, the tropical lambda length $c_{\overline\L}(\overline\alpha)$ is multiplied by a factor of $\frac{y_\gamma}{y_\beta}$ (or $\frac{y_\beta}{y_\gamma}$ if $\alpha$ is notched at~$p$).
Since the weight of $\gamma$ in $\AA$ is $\frac{y_\gamma}{y_\beta}$, when $\overline\alpha$ is given an additional clockwise twist, $c_{\overline\AA}(\overline\alpha)$ is also multiplied by a factor of $\frac{y_\gamma}{y_\beta}$ (or $\frac{y_\beta}{y_\gamma}$ if $\alpha$ is notched).
Thus, to prove that the contributions to~$c_{\overline\L}(\overline\alpha)$ and $c_{\overline\AA}(\overline\alpha)$ at $p$ are the same, we may make a convenient choice of the lift $\overline\alpha$.
By matching the spirals of $\overline\alpha$ to those of $\overline L_\gamma$, we will assume that $\overline\alpha$ does not intersect $\overline L_\gamma$ at~$p$.
(See \cref{endpoint contrib}, where labels $\beta$, $\gamma$, and $\delta$ are shorthand for $\overline L_\beta$, $\overline L_\gamma$, and $\overline L_\delta$.
The left picture shows $\overline\L$ and the right picture shows $\overline\AA$.)
\begin{figure}
\begin{tabular}{ccc}
\scalebox{0.8}{\includegraphics{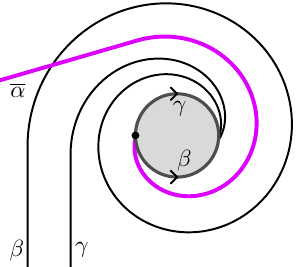}}&\qquad\qquad&\scalebox{0.8}{\includegraphics{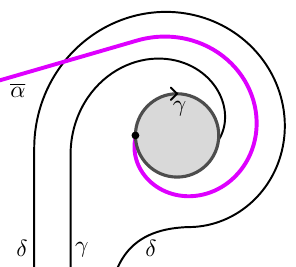}}  \end{tabular}
\caption{An illustration of the proof of \cref{alt really OK}.}
\label{endpoint contrib}
\end{figure}

For a moment, assume that the endpoint of $\alpha$ is tagged plain at~$p$.
Then $\overline L_\gamma$ contributes a factor $1$ to $c_{\overline\L}(\overline\alpha)$.
Since $\overline L_\beta$ twists clockwise about $C_p$ once more than $\overline L_\gamma$, a lift of $\alpha$ with one more clockwise twist would have contribution $1$ from $\overline L_\beta$ as well.
Since we obtain $\overline\alpha$ by adding one counterclockwise twist to a lift with contribution $1$ from $\overline L_\beta$ and since $\overline L_\beta$ carries a counterclockwise orientation at $p$, the contribution of $\overline L_\beta$ to $c_{\overline\L}(\overline\alpha)$ is $y_\beta$.
Because $\overline L_\delta$ follows $\overline L_\gamma$ to $p$, goes around~$p$, and then follows $\overline L_\gamma$ again, $\overline\alpha$ intersects $\overline L_\delta$ exactly once at $p$.
Since also $\overline\alpha$ does not intersect $\overline L_\gamma$ at $p$, the contribution to  $c_{\overline\AA}(\overline\alpha)$ at $p$ is again $y_\beta$.

If the endpoint of $\alpha$ is tagged notched at~$p$, there are additional contributions of $y_\beta^{-1}$ and $y_\gamma$ to~$c_{\overline\L}(\overline\alpha)$ at $p$ and there is an additional contribution of $\frac{y_\gamma}{y_\beta}$ to $c_{\overline\AA}(\overline\alpha)$ at~$p$.
\end{proof}

\begin{remark}\label{ambiguity rem}
It may seem surprising that different weighted multilaminations give the same laminated lambda lengths.
To avoid this non-uniqueness of multilaminations, one would need to replace laminations with the quasi-laminations of~\cite[Section~4]{unisurface}.
However, for the present purposes, the non-uniqueness is a benefit, because it allows us to lift the multilamination to the tile cover in the right way.
Indeed, the fact that we lift the alternative multilamination rather than the principal coefficients lamination accounts for the fractional labels on some elements of~$P_\alpha$.
\end{remark}

In light of \cref{alt OK,alt really OK}, we lift the alternative weighted multilamination $\AA$ to the tile cover.
This is done by making the lamination correct in each tile.
The result is a principal coefficients weighted multilamination on the tile cover where the weights have been specialized to match the weights on the alternative weighted multilamination in $(\S,\M)$.

\begin{example}\label{alt ex}
Continuing \cref{tile ex 2 remix}, the weights on the right picture of \cref{tilex1} are $\frac{y_3}{y_2}$ for arcs labeled $3$, $\frac{y_{11}}{y_{10}}$ for arcs labeled $11$, and otherwise $y_i$ for arcs labeled~$y_i$.
\end{example}

Now that we know the weights on the arcs in $T'$, we can prove the following strengthening of \cref{F lift lite}.

\begin{lemma}\label{F lift}
If $\alpha'$ is the tile cover of~$\alpha$, then $F(P_{\alpha})=F(P_{\alpha'})$.
\end{lemma}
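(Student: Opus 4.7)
The plan is to leverage the earlier Lemma \ref{F lift lite} to reduce the task to checking that the tropical-variable parts of weights on corresponding elements agree. Recall that each weight $w(e)$ is a Laurent monomial in the $\hy_\gamma = y_\gamma \prod_{\beta \in T} x_\beta^{b_{\beta\gamma}}$, so it factors uniquely as (an $x$-monomial) times (a $y$-monomial). Lemma \ref{F lift lite} already provides a poset isomorphism $P_\alpha \cong P_{\alpha'}$ together with equality of $x$-monomials on corresponding elements. What remains is to verify, one element at a time, that the $y$-monomial of $w(e)$ equals the $y$-monomial of $w(e')$ for the corresponding element $e' \in P_{\alpha'}$, where the latter is interpreted through the tropical variables on $T'$ coming from the lifted alternative multilamination $\overline\AA$.

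By Proposition \ref{reduce}, I would restrict to tagged arcs $\alpha$ that are plain at any endpoint which is the interior vertex of a self-folded triangle. This eliminates the second exceptional case in the definition of $w$, so every element $e \in P_\alpha$ has weight either $\hy_\gamma$ for some arc $\gamma$ of $T^\circ$ not a self-folded-triangle configuration, or $\hy_\beta/\hy_\gamma$ for $\beta$ the interior and $\gamma$ the exterior edge of a self-folded triangle passed through by $\alpha$. The tropical parts are $y_\gamma$ and $y_\beta/y_\gamma$, respectively (interpreting the subscripts under the standard convention that identifies the interior edge with the plain-tagged version and the exterior loop with the notched-tagged version).

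Next I would read off the tropical variables on the tile cover directly from the construction of $\AA$ and its lift. Any arc $\gamma' \in T'$ that lifts an arc $\gamma$ of $T^\circ$ not involved in a self-folded triangle inherits weight $y_\gamma$; any lift of the interior edge of a self-folded triangle inherits the weight $y_{\text{plain}}/y_{\text{notched}}$ assigned by $\AA$; and any lift of the exterior loop inherits $y_{\text{notched}}$. Matching case by case: if $w(e) = \hy_\gamma$ and $\gamma$ is not the interior edge of a self-folded triangle, then the corresponding tile edge $\gamma'$ has tropical variable $y_\gamma$, so the $y$-monomials coincide. If $w(e) = \hy_\beta/\hy_\gamma$ in the Exception~1 configuration, then the corresponding element $e'$ lies on a lift of the interior edge, whose tropical variable is precisely $y_\beta/y_\gamma$ under the matching of conventions. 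Combined with the $x$-part equality from Lemma \ref{F lift lite} (and the observation that $b_{\delta\beta} = b_{\delta\gamma}$ makes the $x$-part of $\hy_\beta/\hy_\gamma$ equal to $1$, consistent with the Lemma \ref{F lift lite} computation showing the lift $\beta'$ also has $x$-part $1$), the full weights on corresponding elements agree, so the two weighted sums $F(P_\alpha)$ and $F(P_{\alpha'})$ are identical.

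The main obstacle is essentially bookkeeping: one has to be careful about the two overlapping conventions (the Subsection~1.2 labeling of interior/exterior vs.\ the Section~5 labeling of plain/notched) in self-folded triangles. The structural content, namely the isomorphism of posets and the matching of $x$-parts, is already in hand from Lemma \ref{F lift lite}; the new content is that the alternative multilamination $\AA$ was specifically engineered (Lemma \ref{alt really OK}) so that its tropical weights on interior and exterior edges assemble into the fractional labels $\hy_\beta/\hy_\gamma$ that appear in $P_\alpha$, which is precisely what makes the tropical parts agree after lifting.
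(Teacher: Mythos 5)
Your proposal is correct and follows essentially the same route as the paper: reduce to the poset isomorphism and the $x$-monomial equality already established in \cref{F lift lite}, then verify element by element that the $y$-parts agree because the lifted alternative multilamination $\overline\AA$ assigns $y_\gamma$ to ordinary lifts and $\frac{y_\gamma}{y_\beta}$ to lifts of interior edges of self-folded triangles. The only cosmetic difference is your appeal to \cref{reduce}, which is not needed since the tile cover is only defined for arcs tagged plain at interior vertices of self-folded triangles.
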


\begin{proof}
Elements of the posets have ``weights'', and there are also ``weights'' $y_\gamma$ for $\gamma\in T$.
Both kinds of weights appear in this proof, but for clarity, we will use the word ``weight'' in this proof only for the weights on the posets and refer to the $y_\gamma$ as ``$y$-variables''.

The two posets are isomorphic by construction, and we check that corresponding elements have the same weights.
We checked in the proof of \cref{F lift lite} that corresponding elements of the posets have the same weight when all the $y$-variables are specialized to~$1$.
Since each weight is a Laurent monomial in the $x$- and $y$-variables, we complete the proof by showing that corresponding elements of the posets have the same weight when all the $x$-variables are specialized to~$1$.
We make that specialization for the remainder of the proof.

In $P_\alpha$, an element corresponding to crossing an arc $\gamma\in T^\circ$ is weighted $y_\gamma$ unless $\gamma$ is the interior edge of a self-folded triangle, in which case it is weighted $\frac{y_\gamma}{y_\beta}$, where $\beta$ is the exterior edge.
Every element of $P_{\alpha'}$ is weighted by a single $y$-variable in the sense of the weighted lamination on~$(\S',\M')$.
Since we lifted the alternative weighted lamination to $(\S',\M')$, this is again $y_\gamma$ unless~$\gamma$ is an interior edge, in which case it is $\frac{y_\gamma}{y_\beta}$, where $\beta$ is the exterior edge.
%
\end{proof}

The opening of $(\S,\M)$ can be made by cutting corners from triangles in $T^\circ$.
We obtain the opening of $(\S',\M')$ by making corresponding cuts in the tiles and glueing the tiles together so that the removed corners become holes.
At any endpoints of $\alpha$ that are opened, the neighborhood of the hole is the same in $(\S,\M)$ as in $(\S',\M')$, except when an endpoint of $\alpha$ is the interior vertex of a self-folded triangle.
In that case, the neighborhood of the hole in $(\S',\M')$ is a double cover of the neighborhood of the hole in $(\S,\M)$.
We lift the hyperbolic metric to the tiles (minus removed corners) and also lift the horocycles, conjugate horocycles, perpendicular horocyclic segments, and conjugate perpendicular horocyclic segments.
In the case where the neighborhood of the hole in $(\S',\M')$ is a double cover, we obtain two marked points (and two perpendicular horocyclic segments).
Of the two marked points, we retain the one that is the endpoint of $\alpha'$ and ignore the other (and its perpendicular horocyclic segment).

Moving forward, we assume all the structure on $(\S',\M')$ that was lifted from $(\S,\M)$ as described above.
The following is the analog of \cref{lambda lift} for the principal coefficients case.

\begin{lemma}\label{lambda lift opened}
If $\alpha'$ is the tile cover of~$\alpha$, then the laminated lambda length $\lambda(\alpha)$ in $(\S,\M)$ equals the laminated lambda length $\lambda(\alpha')$ in $(\S',\M')$.
\end{lemma}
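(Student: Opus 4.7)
The plan is to factor the laminated lambda length as $x_{\overline\L}(\alpha) = \lambda(\overline\alpha)/c_{\overline\L}(\overline\alpha)$ and to verify that each factor is preserved under passing to the tile cover, for a judiciously chosen pair of corresponding lifts of $\alpha$ and $\alpha'$.

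First I would choose a topological lift $\overline\alpha$ of $\alpha$ in the opening of $(\S,\M)$, and then construct a corresponding topological lift of $\alpha'$ in the opening of $(\S',\M')$ by matching the number and direction of twists around each opened endpoint to those of the lift on the tile cover. Since the hyperbolic metric, the horocycles, the perpendicular horocyclic segments, and their conjugate versions were all lifted tile-by-tile, and since the hypothesis that $\alpha$ is tagged plain at any endpoint lying at the interior vertex of a self-folded triangle guarantees that the double-cover ambiguity in the tile cover plays no role in selecting which marked point and which horocycle is used, essentially the same argument as in the proof of \cref{lambda lift} yields equality of the two ordinary lambda lengths.

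Next I would establish the equality of tropical lambda lengths. By \cref{alt really OK}, we have $c_{\overline\L}(\overline\alpha) = c_{\overline\AA}(\overline\alpha)$ in $(\S,\M)$. The alternative multilamination $\overline\AA$ was lifted tile-by-tile to produce a weighted multilamination on $(\S',\M')$ which, by construction, is precisely the principal-coefficients weighted multilamination on the tile cover with weights specialized as in \cref{alt ex}. Every intersection of $\overline\alpha$ with a curve of $\overline\AA$ inside some triangle of $T^\circ$ corresponds to exactly one intersection of the chosen lift of $\alpha'$ with the lifted curve in the corresponding tile, and the weights match by construction. Together with the matching of the extra contributions at notched endpoints (again using the plain-tagging hypothesis at interior vertices), this yields equality of $c_{\overline\AA}(\overline\alpha)$ and the tropical lambda length of the chosen lift of $\alpha'$. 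Dividing the matched ordinary lengths by the matched tropical lengths concludes the proof.

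The main obstacle will be verifying carefully the correspondence of lifts near opened punctures, particularly when the endpoint of $\alpha$ sits at the interior vertex of a self-folded triangle and the tile cover is a double cover of the neighborhood. One must check that the convention of retaining only the marked point that is the endpoint of $\alpha'$ selects, in the tile cover, precisely the perpendicular horocyclic segment (or its conjugate) that lifts $h_p$ (or $\overline h_p$), so that the signed contributions to both the ordinary length and the tropical length at $p$ match. Once that bookkeeping is done, the equality follows by a direct accounting of contributions tile by tile.
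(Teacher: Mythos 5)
Your proposal is correct and follows essentially the same route as the paper: factor the laminated lambda length as $\lambda(\overline\alpha)/c_{\overline\L}(\overline\alpha)$, match corresponding lifts, use the tile-by-tile lifting of the metric, horocycles, and the alternative multilamination together with \cref{alt really OK}, and then handle the endpoint at the interior vertex of a self-folded triangle separately. The bookkeeping you flag as the remaining obstacle is exactly where the paper's proof spends its effort, resolving it via the identities $c(p')=c(p)^2=\lambda(p')$ and the observation that one twist about $C_{p'}$ projects to two twists about $C_p$, so the correction factors for changing lifts agree.
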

\begin{proof}
Take any lift $\overline\alpha'$ of the tile cover $\alpha'$ that lies within the tiles in the opening of $(\S',\M')$.
There is a projection of the opening of $(\S',\M')$ to the opening of $(\S,\M)$ by sending each tile isometrically to the corresponding triangle, and we write $\overline\alpha$ for the projection of $\overline\alpha'$.
As the notation suggests,~$\overline\alpha$ is a lift of $\alpha$ to the opening of $(\S,\M)$.

The simple idea of the proof is as follows:
The lambda length of $\overline\alpha'$ equals the lambda length of~$\overline\alpha$ because the hyperbolic structure on the opening of $(\S',\M')$ is lifted, tile by tile, from the hyperbolic structure on the opening of $(\S,\M)$---everywhere where $\overline\alpha$ passes through the surface---and furthermore, all horocycles, conjugate horocycles, perpendicular horocyclic segments, and conjugate perpendicular horocyclic segments relevant to $\overline\alpha'$ are lifted from the corresponding horcycles, etc.\ relevant to $\overline\alpha$.
Similarly, the weighted multilamination on $(\S',\M')$ is lifted from the alternative weighted multilamination on $(\S,\M)$, so the tropical lambda length of $\overline\alpha'$ equals the tropical lambda length of $\overline\alpha$ with respect to the alternative weighted multilamination, and thus by \cref{alt really OK} equals the principal coefficients tropical lambda length of $\overline\alpha$.
Since the laminated lambda length is the ratio of the lambda length over the tropical lambda length, it follows that the laminated lambda length of $\alpha$ equals the laminated lambda length of $\alpha'$.

This simple argument is complete and correct in most cases, but fails when one or both endpoints of $\alpha$ is at the interior vertex $p$ of a self-folded triangle in $T^\circ$.
The issue is that the neighborhood of~$C_p$, which is vital to computing both lambda lengths and tropical lambda lengths, does not lift to the neighborhood of an opened puncture in the opening of $(\S',\M')$.
Instead, the neighborhood of~$C_p$ is \emph{doubly covered} by the neighborhood of an opened puncture in the opening of $(\S',\M')$.
The argument is easily rescued in such cases.
As in the proof of \cref{lambda lift}, a crucial point is that in such cases, $\alpha$ is tagged plain at~$p$.
Another crucial point is how lambda lengths and tropical lambda lengths change when the lifts are changed by inserting additional twists about~$C_p$.

Let $\delta$ be the exterior edge of the self-folded triangle whose interior vertex is $p$, let $\gamma$ be the interior edge, and let $\beta$ be the tagged arc that agrees with $\gamma$ but is tagged notched at~$p$.
The puncture $p'$ in $(\S',\M')$ corresponding to this endpoint of $\alpha$ is contained in two tiles.
The two edges incident to $p'$ both have laminated lambda lengths $x_\gamma$.
The weights on the corresponding elementary laminations are both $\frac{y_\gamma}{y_\beta}$.
Thus the tropical lambda length of $p'$ is $c(p')=\frac{y_\gamma^2}{y_\beta^2}$, and this also equals $\lambda(p')$.

In $(\S,\M)$, on the other hand, there is one edge of $T^\circ$ incident to $p$, with lambda length $x_\gamma$.
The associated elementary lamination has weight $\frac{y_\gamma}{y_\beta}$, so $c(p)=\frac{y_\gamma}{y_\beta}=\lambda(p)$.
The fact that $c(p')=c(p)^2$ is consistent with the fact that the neighborhood of $C_{p'}$ doubly covers the neighborhood of~$C_p$.

We can now easily see that the tropical lambda lengths of $\overline\alpha$ and $\overline\alpha'$ are still equal, despite the local double cover.
The orientation of $C_p$ in $\overline L_\gamma$ is clockwise, as is the orientation of $C_{p'}$ in the two copies of $\overline L_\gamma$ that were lifted to $(\S',\M')$.
For a moment, assume that the lift $\overline\alpha'$ so that it twists far enough clockwise about $C_{p'}$ and so that $\overline\alpha$ also twists far enough about $C_p$.
(We can do this at both endpoints of $\alpha$ if necessary.
Since the requirement is that $\overline\alpha$ must twist far enough clockwise at both endpoints, this can be done even if the endpoints of $\alpha$ are both at $p$.)
Now every contribution of $\overline L_\gamma$ to the tropical lambda length of $\overline\alpha$ in $(\S,\M)$ lifts to a corresponding contribution (with the same weight) to the tropical lambda length of $\overline\alpha'$ in $(\S',\M')$.

We see that, for a particular choice of $\overline\alpha'$, the tropical lambda lengths of $\overline\alpha$ and $\overline\alpha'$ are equal.
When we change the lift of $\alpha'$ by adding a clockwise twist about $C_{p'}$, we multiply the tropical lambda length of the lift by $c(p')=y_\gamma^{2}/y_\beta^{2}$.
This changes the corresponding lift of $\alpha$ by adding \emph{two} clockwise twists about $C_p$, thus multiplying the tropical lambda length by $(y_\gamma/y_\beta)^2$.
Thus for any particular choice of $\overline\alpha'$, the tropical lambda lengths of $\overline\alpha$ and $\overline\alpha'$ are equal.

We can similarly see that the lambda lengths of $\overline\alpha$ and $\overline\alpha'$ are equal.
In the definition of these lambda lengths, we need the lifts to twist far enough in a particular direction.  
The direction around~$C_{p'}$ and the direction around $C_p$ are related by the projection from the tile cover to~$(\S,\M)$.
Thus we can take a lift of $\alpha'$ that twists far enough and projects to a lift of $\alpha$ that twists far enough.  
In this case, the segment $[\alpha'_{\sigma'}]$ projects to the segment $[\alpha_\sigma]$, where we have written $\sigma'$ for the hyperbolic metric on $(\S',\M')$.
Since $\sigma'$ is the lift of $\sigma$, these two segments have the same signed length and therefore the two lifts have the same lambda length.
To change these two lifts to $\overline\alpha'$ and $\overline\alpha$, we multiply them respectively by $\lambda(p')^k=y_\gamma^{2k}/y_\beta^{2k}$ and $\lambda(p)^{2k}=y_\gamma^{2k}/y_\beta^{2k}$ for some~$k$.
Thus $\lambda(\overline\alpha')=\lambda(\overline\alpha)$.
(We can make the same argument if both endpoints of $\alpha$ are interior vertices of a self-folded triangle, and if the endpoints of $\alpha$ coincide, the directions of ``twisting far enough'' are the same at both endpoints of $\alpha'$.)
\end{proof}

We now prove the main result of the paper.
The proof parallels the proof of \cref{main lite} and uses some of the same subsidiary results.

\begin{proof}[Proof of \cref{main}]
Suppose $\alpha$ is a tagged arc in $(\S,\M)$.
By \cref{reduce}, we can assume that when $\alpha$ has an endpoint that is the interior vertex of a self-folded triangle in $T^\circ$, the tagging of $\alpha$ is plain at that endpoint.  We need to show that
\[
\lambda(\alpha)=\g_\alpha\cdot F(P_\alpha).
\]
We argue by induction on the number of elements of $P_\alpha$, more specifically taking this inductive hypothesis: the formula holds for any tagged arc $\alpha'$ in \emph{any} triangulated marked surface such that~$P_{\alpha'}$ has fewer elements than $P_\alpha$.
The base of the induction, when $P_\alpha$ is empty, is easy, and details are in the proof of \cref{main lite}.

If $P_\alpha$ is not empty, then construct the tile cover $\alpha'$ in a marked surface $(\S',\M')$ with a triangulation $T'$.
Lift the opening, hyperbolic metric, weighted multilamination $\AA$, and specified laminated lambda lengths to $(\S',\M')$ as explained above.
By construction, $P_{\alpha'}$ is isomorphic with $P_\alpha$ as unweighted posets, and $(T',\alpha')$ is tidy.
Thus there is an arc $\gamma'\in T'$ such that $\hy_{\gamma'}$ occurs exactly once as a label in $P_\alpha$.
Thus, just as in the proof of \cref{main lite} (\cref{ideal exch eq lite}), \cref{ideal decomp} says that 
\begin{equation}\label{ideal exch eq}
x_{\gamma'}\cdot\g_{\alpha'}\cdot F(P_{\alpha'})=x_{\gamma'}\cdot\g_{\alpha'}\cdot F(\Pb)+x_{\gamma'}\cdot\g_{\alpha'}\cdot\hy_\orange\cdot F(\Pr).
\end{equation}
The quantity $F(P_\blue)$ in \cref{ideal exch eq} is the product of one or two polynomials $F(P_{\delta'})$ for tagged arcs $\delta'$ in $(\S',\M')$.
Each of these posets $P_{\delta'}$ has strictly fewer elements than $P_\alpha$.
By induction, the laminated lambda length of $\delta'$ in $(\S',\M')$ is $\g_{\delta'}\cdot F(P_{\delta'})$, and this is the cluster variable associated to $\delta'$ in $(\S',\M')$.
Similarly, $F(P_\red)$ is the product of one or two polynomials $F(P_{\delta'})$ and by induction, the laminate lambda length of each $\delta'$ is $\g_{\delta'}\cdot F(P_{\delta'})$, which is again a cluster variable~$x_{\delta'}$.
\cref{g prop}.\ref{g prop blue} (applied to $\alpha'$) says that the first term on the right side of \cref{ideal exch eq} is~${\g_\blue\cdot F(P_\blue)}$, which is the product of the cluster variables for the (one or two) blue arcs.
Writing $y_\orange$ for the product of the coefficient variables $y_\gamma$ for orange arcs $\gamma\in T$, \cref{g prop}.\ref{g prop red} says that the second term on the right side of \cref{ideal exch eq} is $y_\orange\cdot\g_\red\cdot F(P_\red)$, which is $y_\orange$ times the product of the cluster variables for the (one or two) red arcs.
Thus the right side of \eqref{ideal exch eq} is the right side of the exchange relation exchanging the cluster variables for $\alpha'$ and $\gamma$.
(See \cref{F or x}.)
We conclude that the cluster variable for $\alpha'$ in $(\S',\M')$ is $\g_{\alpha'}\cdot F(P_{\alpha'})$.
This cluster variable is the laminated lambda length of $\alpha'$ in $(\S',\M')$.
\cref{lambda lift opened} now says that the laminated lambda length of $\alpha$ in $(\S,\M)$ is $\g_{\alpha'}\cdot F(P_{\alpha'})$, which equals $\g_\alpha\cdot F(P_\alpha)$, by \cref{g lift,F lift}.
\end{proof}

\section{Laminated lambda lengths of tagged geodesics}\label{geo sec}
Given a hyperbolic metric on $\S\setminus\M$, we will use the term \newword{tagged geodesic} for any geodesic in~$\S$ that limits to points in $\M$ at both ends and is tagged plain or notched at each of its endpoints that is a puncture.
(The definition of a tagged arc requires that the arc not intersect itself, that it may not bound a once-punctured monogon, and that, if its two endpoints coincide, they must have the same tagging.
For tagged geodesics, we drop all of these requirements, but we keep the requirement that tags at boundary points are plain.)
The construction in \cite[Definition~15.3]{FominThurston}, reviewed in \cref{sec:hyperbolic,sec:tropical}, extends to the definition of a (laminated) lambda length $x_\alpha$ for any tagged geodesic $\alpha$.
In this section, we extend \cref{main,main lite} to all tagged geodesics.
We also give a combinatorial characterization of tagged geodesics.

\subsection{Extending the theorems}\label{ex sec}
The following theorem extends \cref{main} by weakening the hypotheses on $\alpha$.

\begin{theorem}\label{extended}
The principal-coefficients laminated lambda length of a tagged geodesic~$\alpha$ is
\begin{equation}
\label{ex eq}
x_\alpha=\g_\alpha\cdot\sum_I \prod_{e\in I}w(e),
\end{equation}
where the sum is over all order ideals $I$ in $P_\alpha$.
\end{theorem}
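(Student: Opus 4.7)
The plan is to mirror the proof of \cref{main} in \cref{sec:tropical} almost verbatim, taking advantage of the fact that the tile cover construction is entirely local. The construction of \cref{tile cov sec} attaches one tile per segment of $\alpha$ between successive crossings of $T^\circ$ (plus extra tiles around the endpoints), and it applies without modification to any tagged geodesic $\alpha$. Even when $\alpha$ self-intersects, bounds a once-punctured monogon, or has two coincident endpoints with opposite taggings, each passage of $\alpha$ through a triangle of $T^\circ$ produces its own tile in the cover, so the resulting curve $\alpha'$ in $(\S',\M')$ is a genuine tagged arc with two distinct endpoints in $\M'$, and $(T',\alpha')$ is tidy in the sense of \cref{subsec:tidy}. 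In particular, self-identifications present in $\S$ are resolved by the unfolding provided by the cover.

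First I would extend \cref{reduce} to tagged geodesics: the tag-switching symmetry at a puncture inside a self-folded triangle preserves $\g_\alpha$, the labeled poset $P_\alpha$, and the weight function $w$, and it is likewise a symmetry of the laminated lambda length construction of \cite[Definition~15.3]{FominThurston}, which requires only that $\alpha$ be a tagged geodesic. This reduces \cref{extended} to the case where $\alpha$ is tagged plain at every endpoint that is the interior vertex of a self-folded triangle, which is precisely the hypothesis under which the tile cover was defined. Next I would verify that \cref{lambda lift opened,g lift,F lift} continue to hold for tagged geodesics. Each of these statements depends only on a tile-by-tile comparison of local data: labels and weights for \cref{F lift}, shear-coordinate contributions at each passage of $\kappa(\alpha)$ through a quadrilateral of $T^\circ$ for \cref{g lift}, and lifted hyperbolic and multilamination data for \cref{lambda lift opened}. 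Since the tile cover accounts for each passage of $\alpha$ through a triangle independently, the arguments given in \cref{sec:tropical} apply without essential change, including the double-cover discussion at interior vertices of self-folded triangles.

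Once these three lemmas are in place, the conclusion is immediate: apply \cref{main} to the tagged arc $\alpha'$ in $(\S',\M')$ to obtain $x_{\alpha'} = \g_{\alpha'}\cdot F(P_{\alpha'})$, and transfer across the cover to get $x_\alpha = \g_\alpha\cdot F(P_\alpha)$. The main obstacle I anticipate is a careful verification of \cref{lambda lift opened} in the most pathological configurations --- when $\alpha$ has both endpoints at the same puncture, when $\alpha$ self-intersects inside a self-folded triangle, or when several passages of $\alpha$ through a single triangle of $T^\circ$ have their lifts to $(\S',\M')$ stitched together into a single connected path. The key point in each case is that the tile map $(\S',\M')\to(\S,\M)$ is a local isometry, so the geodesic segment realizing $\alpha'$ between the chosen horocycles projects to a geodesic path realizing $\alpha$ with the same signed length, and the tropical lambda length agrees by the same lifting of the alternative multilamination $\AA$; the pathological cases only add further bookkeeping, not new ideas.
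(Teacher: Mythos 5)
Your overall architecture is the paper's: lift $\alpha$ to a tagged arc $\alpha'$ by a tile cover, transfer $\g$, $F(P)$, and the laminated lambda length across the cover via \cref{g lift}, \cref{F lift}, and \cref{lambda lift opened}, and then apply \cref{main} to $\alpha'$. But you diverge at the self-folded triangles, and the divergence opens a real gap. You propose to first extend \cref{reduce} to tagged geodesics so that the original tile cover (which unfolds a self-folded triangle into a digon and doubly covers the neighborhood of its interior vertex) is available. The paper's proof of \cref{reduce}, however, is a cluster-algebra argument: it works by running the \emph{same sequence of flips} from $T$ and from $T'$ and observing that the exchange relations produce the same expression up to the swap $x_\beta\leftrightarrow x_\gamma$, $y_\beta\leftrightarrow y_\gamma$. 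For a tagged geodesic that is not a tagged arc there is no cluster variable and no flip sequence, so that argument simply does not apply; your one-clause assertion that tag-switching "is likewise a symmetry of the laminated lambda length construction" is exactly the statement that would need a direct hyperbolic-geometric proof (matching openings, orientations of $C_p$, horocycles versus conjugate horocycles, and the transformation of both $\lambda(\overline\alpha)$ and $c_{\overline\L}(\overline\alpha)$ under the swap). It is very plausibly true, but it is not a citation, and as written it is the missing step. (A smaller slip: the symmetry does not \emph{preserve} $\g_\alpha$, $P_\alpha$, and $w$; it intertwines them with the variable swap.)

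The paper avoids this entirely by modifying the tile cover for tagged geodesics: at any endpoint of $\alpha$ that is the interior vertex of a self-folded triangle, it \emph{keeps} the self-folded triangle in $(\S',\M')$ rather than unfolding it into a digon. Then no reduction of taggings is needed, the neighborhood of that endpoint lifts isometrically rather than as a double cover, and Lemmas \ref{g lift}, \ref{F lift}, and \ref{lambda lift opened} become strictly easier (no double-cover bookkeeping, no reversal of self-folded-triangle roles). Tidiness of $(T',\alpha')$ is irrelevant at this stage since one invokes \cref{main} for $\alpha'$ directly rather than the exchange-relation induction, so sacrificing it costs nothing. If you want to keep your route, you must actually prove the tag-switching symmetry of laminated lambda lengths for arbitrary tagged geodesics; otherwise, adopt the paper's modified cover, which renders the issue moot.
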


The theorem associates to any tagged geodesic $\alpha$ both a $\g$-vector and an $F$-polynomial, in the same sense as for cluster variables.
The $\g$-vector is $\g_\alpha$ and the $F$-polynomial is $F(P_\alpha)$.

If we set all the $y_\gamma$ to $1$, then $c(p)=1$ for every puncture, so the opened surface has none of the punctures opened.
Thus the lift $\overline\L$ is the same as $\L$ and also every tagged geodesic $\alpha$ has~$\overline\alpha=\alpha$, so lambda lengths in the (not) opened surface are the same as lambda lengths in the coefficient-free model.
Also, $c_{\overline\L}(\overline\alpha)=1$ for every $\alpha$, so laminated lambda lengths are the same as ordinary lambda lengths.
Thus the entire laminated lambda length construction collapses to the coefficient-free lambda length construction.
We conclude that setting all $y_\gamma$ to $1$ in \cref{extended} implies the following theorem, which extends \cref{main lite}.

\begin{theorem}\label{extended lite}
Suppose all boundary components of $(\S,\M)$ have lambda length~$1$.
The lambda length of a tagged geodesic $\alpha$ is
\begin{equation}\label{ex lite eq}
\lambda(\alpha)=\g_\alpha\cdot\sum_I \prod_{e\in I}\tilde w(e),
\end{equation}
where the sum is over all order ideals $I$ in $P_\alpha$.
\end{theorem}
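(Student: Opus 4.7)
The plan is to derive \cref{extended lite} directly from \cref{extended} by specializing every tropical variable $y_\gamma$ to~$1$. The strategy is to apply \cref{extended} to an arbitrary tagged geodesic $\alpha$, obtaining $x_\alpha = \g_\alpha \cdot \sum_I \prod_{e\in I} w(e)$, and then verify that each of the three ingredients of this identity degenerates in the expected way under $y_\gamma \mapsto 1$, so that the equation becomes exactly \cref{ex lite eq}.

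First I would check that the left-hand side collapses from the laminated lambda length $x_\alpha$ to the ordinary lambda length $\lambda(\alpha)$. Under the substitution $y_\gamma \mapsto 1$ the tropical lambda length $c(p)$ becomes $1$ at every puncture~$p$, so the opening of $(\S,\M)$ determined by these weights has no opened punctures and the hyperbolic metric is simply a decorated hyperbolic structure on $(\S,\M)$. Also $c(\beta) = 1$ at every boundary segment~$\beta$, which matches the hypothesis of \cref{extended lite} that all boundary components have lambda length~$1$. The lifted multilamination $\overline\L$ coincides with $\L$, every tagged geodesic admits the trivial lift $\overline\alpha = \alpha$, the tropical lambda length satisfies $c_{\overline\L}(\overline\alpha) = 1$, and hence $x_\alpha = \lambda(\overline\alpha)/c_{\overline\L}(\overline\alpha) = \lambda(\alpha)$.

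Second I would verify that each weight $w(e)$ specializes to $\tilde w(e)$, which is essentially the definition given in \cref{sec:hyperbolic}. When $w(e) = \hy_\gamma = y_\gamma \prod_{\beta\in T} x_\beta^{b_{\beta\gamma}}$, the substitution yields $\prod_{\beta\in T} x_\beta^{b_{\beta\gamma}} = \tilde w(e)$; and when $w(e) = \hy_\beta/\hy_\gamma$ with $(\beta,\gamma)$ arising from a self-folded triangle the substitution yields $1 = \tilde w(e)$, since $b_{\delta\beta} = b_{\delta\gamma}$ for every $\delta \in T$. The monomial $\g_\alpha$ is independent of the tropical variables and passes through the specialization unchanged. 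Combining these three observations transforms \cref{ex eq} into \cref{ex lite eq}. No step presents a substantive obstacle beyond \cref{extended} itself; the only work is checking that the laminated setup genuinely degenerates to the coefficient-free setup, which is the content of the verifications above.
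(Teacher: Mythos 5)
Your proposal is correct and follows essentially the same route as the paper: the paper likewise deduces \cref{extended lite} by setting all $y_\gamma$ to $1$ in \cref{extended}, observing that $c(p)=1$ forces no punctures to open, that $\overline\L=\L$ and $\overline\alpha=\alpha$, and that $c_{\overline\L}(\overline\alpha)=1$, so the laminated lambda length construction collapses to the coefficient-free one. Your additional check that $w(e)$ specializes to $\tilde w(e)$ is immediate from the definition of $\tilde w$ in \cref{sec:hyperbolic}, so nothing is missing.
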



We deduce \cref{extended} from \cref{main} using a variation on the tile cover construction.

\begin{proof}[Proof of \cref{extended}]
Suppose $\alpha$ is a tagged geodesic.
If $\alpha$ is a tagged arc, then we are done by \cref{main}, so assume $\alpha$ is not a tagged arc.
In particular, $\alpha$ does not coincide, except for tagging, with an arc of $T^\circ$.

We lift $\alpha$ to a tagged arc $\alpha'$ using a variation of the tile cover construction.
As in \cref{tile cov sec}, decompose $\alpha$ into segments, each having its endpoints on arcs of $T^\circ$ and otherwise not intersecting the arcs of $T^\circ$.
Construct a tile for each segment as before, except at any endpoint of $\alpha$ that is the interior vertex of a self-folded triangle.
At each such endpoint, keep the self-folded triangle instead of making a tile.
Identify all of these triangles in the natural way. 

Lift the opening, multilamination, hyperbolic structure, and specified lamination lengths to the tile cover as in \cref{tile cov sec}, with the obvious simplifications due to keeping some self-folded triangles.
\cref{g lift,lambda lift opened,F lift} hold by essentially the same proofs.
(For \cref{g lift}, the proof is simpler because the endpoints of $\alpha$ remain like the left pictures of \cref{digonshear} instead of the right pictures.
For \cref{F lift}, the proof is simpler because there are no double covers to account for.)
Since $\alpha'$ is a tagged arc \cref{ex eq} holds for $\alpha'$ by \cref{main}.
By the analogs of these three lemmas, \cref{ex eq} holds for~$\alpha$.
\end{proof}

\subsection{Combinatorial tagged geodesics}\label{comb tag sec}

\cref{extended} applies to all tagged geodesics in $(\S,\M)$, but it is more convenient to apply the theorem to curves that are determined combinatorially, rather than by the metric.
As a byproduct of the proof of the theorem, we explain how to do that.

A \newword{combinatorial tagged geodesic} in $(\S,\M)$ consists of two (not necessarily distinct) marked points $p$ and $q$ in $\M$, a tagging at each of the points that is a puncture, and a sequence of arcs of $T^\circ$, either a single arc in $T^\circ$ with endpoints $p$ and $q$ or a sequence satisfying the following conditions:  
the sequence starts with the an arc opposite $p$ in some triangle in $T^\circ$; 
the sequence ends with the arc opposite $q$ in some triangle in $T^\circ$; 
every pair of adjacent entries in the sequence are \emph{distinct} arcs in the \emph{same} triangle of $T^\circ$; and
the sequence of triangles determined by pairs of adjacent entries does not have the same triangle twice in a row.
Combinatorial tagged geodesics are defined up to the symmetry of swapping endpoints and reversing the sequence.

A combinatorial tagged geodesic determines, up to isotopy, a tagged curve in $\S$ connecting the two endpoints and crossing arcs of $T^\circ$ according to the sequence (or instead coinciding with the one arc in the sequence).
In the other direction, if $\alpha$ is a tagged geodesic in $(\S,\M)$, then $\alpha$ determines (up to reversing) a sequence of arcs of $T^\circ$ in the obvious way:
If $\alpha$ coincides with an arc in $T^\circ$, then the sequence consists of that arc.
Otherwise, the sequence consists of the arcs of~$T^\circ$ crossed by $\alpha$ on the way from one endpoint of $\alpha$ to the other.

\begin{proposition}\label{tagged geo prop}
If $\alpha$ is a tagged geodesic in $(\S,\M)$, then the endpoints of $\alpha$, its taggings, and the sequence determined by $\alpha$ constitute a combinatorial tagged geodesic in $(\S,\M)$.
Conversely, given any combinatorial tagged geodesic in $(\S,\M)$, the curve it determines is isotopic to a unique tagged geodesic in $(\S,\M)$.
\end{proposition}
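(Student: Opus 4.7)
The plan is to prove both directions using two standard properties of geodesics in complete hyperbolic surfaces of constant curvature $-1$: any two geodesics automatically sit in minimal intersection position (they bound no embedded bigon), and every isotopy class of curves rel the appropriate endpoint data admits a unique geodesic representative. For the first direction, I start with a tagged geodesic $\alpha$ and list the arcs of $T^\circ$ crossed by $\alpha$ in order. Adjacent entries must be distinct, since a repetition would yield an embedded bigon between a segment of $\alpha$ and the repeated arc, contradicting the no-bigon property; adjacent entries lie in a common triangle because $\alpha$ passes through a single triangle of $T^\circ$ between two consecutive crossings; consecutive triangles in the resulting triangle sequence differ because crossing an edge takes $\alpha$ into the triangle on the opposite side; and the first arc is opposite $p$ in its triangle (and symmetrically, the last is opposite $q$) because lifting $\alpha$ to the universal cover so that $p$ goes to a point at infinity turns $\alpha$ into a vertical geodesic ray that must enter the lifted first triangle through its unique edge not incident to $p$.

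For the second direction, given a combinatorial tagged geodesic $(p, q; \tau_p, \tau_q; a_1, \ldots, a_n)$, I plan to construct a concrete realizing curve by picking smooth arcs inside each intermediate triangle joining chosen crossing points on consecutive entries of the sequence, and attaching opening and closing arcs in the first and last triangles that connect $p$ and $q$ to $a_1$ and $a_n$, spiraling into any punctured endpoint in the direction dictated by its tagging. This curve represents a well-defined isotopy class of tagged curves, whose unique geodesic representative provides the desired tagged geodesic by the standard existence--uniqueness results on complete cusped hyperbolic surfaces. Uniqueness of the tagged geodesic realizing a given combinatorial tagged geodesic then follows immediately from the first direction: two such realizations share the same combinatorial data, lie in the same isotopy class, and coincide as geodesics.

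The main obstacle will be the careful treatment of self-folded triangles, in which the interior edge $\beta$ is incident on both sides to the same triangle of $T^\circ$, so that distinctness of adjacent entries and distinctness of consecutive triangles do not follow from the generic adjacency argument above. I plan to dispose of this by a local analysis inside the once-punctured monogon bounded by the outer loop $\gamma$: a hypothetical tagged geodesic that would cross $\beta$ twice with only the self-folded triangle in between lifts, in the universal cover of that monogon, to a geodesic bounding a bigon with a lift of $\beta$, contradicting minimality; an analogous argument rules out a subsequence $\gamma, \beta, \gamma$ contained entirely in one self-folded triangle. Finally, compatibility of the spiraling direction at a puncture inside a self-folded triangle with the first/last-arc condition must be verified case by case against the tagging conventions recalled in \cref{sec:tropical}.
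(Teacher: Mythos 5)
Your forward direction follows the same route as the paper (geodesics realize minimal position, so consecutive crossings of a single arc are impossible in the generic case, and the universal-cover argument for the first and last entries is fine). However, the self-folded case, which you correctly single out as the main obstacle, is \emph{not} disposed of by your bigon argument, and this is a genuine gap. If $\beta$ is the interior edge of a self-folded triangle $t$ with interior puncture $p$ and exterior edge $\gamma$, a geodesic segment that crosses $\beta$ at two consecutive points lifts to a geodesic crossing two \emph{distinct} lifts $\tilde\beta_1,\tilde\beta_2$ of $\beta$ that share the ideal vertex over $p$ (two sides of one lifted ideal triangle); the region cut off is an ideal corner at that vertex, not a bigon, so no minimality is violated. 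Such configurations are actually realized: any geodesic between marked points whose homotopy class winds around $p$ passes close to the cusp at $p$ and crosses $\beta$ (and the pattern $\gamma,\beta,\gamma$ with both intermediate segments in $t$) exactly as forbidden by the definition of a combinatorial tagged geodesic. So this case cannot be ruled out by an appeal to minimal position; it needs a separate treatment (the paper's own one-sentence argument is equally silent on this point, so you are in good company, but your proposed fix asserts a contradiction that is not there).

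For the converse you take a genuinely different route from the paper, and your version has a second gap. The paper lifts the combinatorial data to the tile cover, where the determined curve becomes an honest embedded tagged arc $\alpha'$ in a disk inside $(\S',\M')$; it then takes the geodesic representative of $\alpha'$ there, uses the forward direction in the cover to see that this geodesic crosses the prescribed sequence, and pushes the geodesic down via the local isometry to $(\S,\M)$. Your plan instead constructs the curve directly in $(\S,\M)$ and appeals to ``every isotopy class admits a unique geodesic representative.'' That statement requires the class to be essential (not homotopic rel endpoints into a cusp), i.e.\ that its lifted ideal endpoints in the universal cover are distinct; this is precisely what the non-backtracking conditions on the sequence must be shown to guarantee, and you do not verify it. (One clean way is to pass to the dual tree of the lifted triangulation of the universal cover; the paper's tile cover accomplishes the same thing.) Finally, your constructed curve should not ``spiral into a punctured endpoint in the direction dictated by its tagging'': a tagged geodesic ends at the marked point, and the tagging is a decoration affecting only which (conjugate) horocycle is used for the lambda length — the spiraling belongs to $\kappa(\alpha)$ and to the laminations, not to the geodesic itself.
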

\begin{proof}
Suppose $\alpha$ is a tagged geodesic in $(\S,\M)$.
If $\alpha$ coincides with an arc in $T^\circ$, there is nothing to check, so suppose not.
Since the arcs in $T^\circ$ are also geodesics and since the hyperbolic metric on $(\S,\M)$ has constant curvature $-1$, $\alpha$ does not intersect the same arc in $T^\circ$ twice without intersecting some other arc in between.
Thus the sequence of arcs crossed by $\alpha$ satisfies all the conditions of a combinatorial tagged geodesic.

Conversely, given a combinatorial tagged geodesic, let $\alpha$ be a tagged curve in $\S$ determined by the combinatorial tagged geodesic.
We construct a tile cover $\alpha'$ for $\alpha$, ignoring the requirement that $\alpha$ is plain at any endpoint that is the interior point of a self-folded triangle.
Since $\alpha'$ is isotopic to an arc in $(\S',\M')$, there is an isotopy representative of $\alpha'$ that is geodesic.
For the reasons given in the previous paragraph, this geodesic representative visits the same sequence of arcs of $T'$ as~$\alpha'$.
The geodesic descends to a geodesic in $(\S,\M)$ that is isotopic to~$\alpha$.
\end{proof}

\subsection*{Acknowledgments}
We thank Salvatore Stella for helpful conversations.


\bibliographystyle{alpha}
\bibliography{clusterideals}
\label{sec:biblio}

\end{document}